\date{\today}
\newtheorem{theorem}{Theorem}[section]
\newtheorem{lemma}[theorem]{Lemma}
\newtheorem{corollary}[theorem]{Corollary}
\newtheorem{proposition}[theorem]{Proposition}
\newtheorem*{exemple}{Example}
\newcommand{\R}{\mathbb R }
\newcommand{\Z}{\mathbb Z }
\newcommand{\N}{\mathbb N }
\newcommand{\GGBS}{$vGBS{}$}
\newcommand{\GBS}{$GBS{}$}
\title{Abelian JSJ decomposition of graphs of free abelian groups}
\author{Benjamin Beeker}
\begin{document}

\maketitle
\begin{abstract}
 A group $G$ is a \GGBS~group if it admits a decomposition as a finite graph of groups with all edge and vertex groups finitely generated and free abelian. We construct the JSJ decomposition of a \GGBS~group over abelian groups. We prove that this decomposition is explicitly computable, and may be obtained by local changes on the initial graph of groups.
\end{abstract}

\section{Introduction}
The theory of JSJ decomposition starts with the work of Jaco-Shalen and Johansson on orientable irreducible closed $3$-manifolds giving a canonical family of 2-dimensional tori. Kropholler first introduced the notion into group theory giving a JSJ decomposition for some Poincar\'e duality groups \cite{Krop}. Then Sela gave a construction for torsion-free hyperbolic groups \cite{Sela}. This notion has been more generally developed by Rips and Sela \cite{RiSe}, Dunwoody and Sageev \cite{DunSa2}, Fujiwara and Papasoglu \cite{FuPa} for various classes of groups. In \cite{GL3a}, Guirardel and Levitt generalize the object by introducing the definition of JSJ deformation space, proving the existence of this space for finitely presented groups.

In this paper we consider the following class of groups.

Let $\Gamma$ be a finite graph of groups with all vertex groups finitely generated free abelian. Let $G$ be the fundamental group of $\Gamma$. If the rank of all edge and vertex groups is equal to a fixed integer $n$, we call such a group a $\GBS_n$ group, standing for Generalized Baumslag-Solitar groups of rank $n$. When the rank is variable, we call such a group a \GGBS~group. The goal of this paper is to describe the JSJ decomposition of $G$ over abelian groups and to give a way to construct it. 

A decomposition of a group $G$ is a graph of groups with fundamental group $G$. To define what a JSJ decomposition is, we need the notion of universally elliptic subgroups. Given a group $G$ and a decomposition $\Gamma$ of $G$, a subgroup $H$ is \textit{elliptic} if 
the group $H$ is conjugate to a subgroup of a vertex group of $\Gamma$. Given a class of subgroups $\mathcal A$ of $G$, a subgroup $H\subset G$ is \textit{universally elliptic}, if $H$ is elliptic in every decomposition of $G$ as a graph of groups with edge groups in $\mathcal A$. A decomposition is universally elliptic if all edge groups are universally elliptic.

A decomposition $\Gamma$ \textit{dominates} another decomposition $\Gamma'$, if every elliptic group of $\Gamma$ is elliptic in $\Gamma'$. A decomposition is a JSJ decomposition if it is universally elliptic, and it dominates every other universally elliptic decomposition.
Then given a vertex $v$ in a JSJ decomposition, either $G_v$, the vertex group of $v$, is universally elliptic, we say $v$ is \textit{rigid}, or $G_v$ is not then we say $v$ is \textit{flexible}. 

For instance, looking at the JSJ decomposition of a torsion-free hyperbolic group over cyclic groups described by Sela in \cite{Sela} and Bowditch \cite{bow}, the flexible vertex groups are exactly the surface groups.

The defining decomposition $\Gamma$ of a \GGBS~group $G$ is a good approximation of the JSJ. For example, for a $\GBS_n$ group, the graph of groups $\Gamma$ is a JSJ decomposition whenever the associated Bass-Serre tree $T$ (which is locally finite in this case) is not a line (\cite{for},\cite{GL3a}) or equivalently whenever $G$ is not polycyclic.

In the general case three kinds of local changes may be needed to obtain the JSJ from $\Gamma$.

\begin{itemize}
 \item When the quotient of a vertex group $G_v$ of the decomposition $\Gamma$ by the group $\tilde G_v$ generated by all adjacent edge groups is virtually cyclic, then the vertex is blown into a loop (see figure \ref{figurecasexpansion} for the simplest example). 

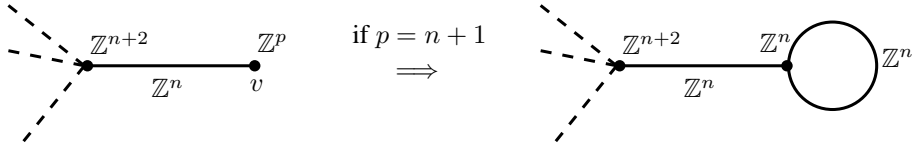
\begin{figure}[!ht]

\begin{center}

\begin{pspicture}(0,-0.92)(12.104218,0.92)
\psline[linewidth=0.04cm,dotsize=0.07055555cm 2.0]{*-*}(1.0395312,0.1)(3.239531,0.1)
\psline[linewidth=0.04cm,dotsize=0.07055555cm 2.0]{*-*}(8.039532,0.1)(10.239531,0.1)
\pscircle[linewidth=0.04,dimen=outer](10.839532,0.1){0.6}
\usefont{T1}{ptm}{m}{n}
\rput(2.09375,-0.195){$\Z^n$}
\usefont{T1}{ptm}{m}{n}
\rput(1.46375,0.405){$\Z^{n+2}$}
\usefont{T1}{ptm}{m}{n}
\rput(3.4737499,0.405){$\Z^{p}$}
\usefont{T1}{ptm}{m}{n}
\rput(3.2637498,-0.155){$v$}
\usefont{T1}{ptm}{m}{n}
\rput(8.46375,0.405){$\Z^{n+2}$}
\usefont{T1}{ptm}{m}{n}
\rput(9.093749,-0.195){$\Z^n$}
\usefont{T1}{ptm}{m}{n}
\rput(11.693749,0.205){$\Z^n$}
\usefont{T1}{ptm}{m}{n}
\rput(10.093749,0.405){$\Z^n$}
\usefont{T1}{ptm}{m}{n}
\rput(5.3737504,0.005){$\Longrightarrow$}
\psline[linewidth=0.04cm,linestyle=dashed,dash=0.16cm 0.16cm](8.0,0.1)(7.2,-0.9)
\psline[linewidth=0.04cm,linestyle=dashed,dash=0.16cm 0.16cm](8.0,0.1)(7.0,0.3)
\psline[linewidth=0.04cm,linestyle=dashed,dash=0.16cm 0.16cm](8.0,0.1)(7.0,0.9)
\psline[linewidth=0.04cm,linestyle=dashed,dash=0.16cm 0.16cm](1.0,0.1)(0.2,-0.9)
\psline[linewidth=0.04cm,linestyle=dashed,dash=0.16cm 0.16cm](1.0,0.1)(0.0,0.3)
\psline[linewidth=0.04cm,linestyle=dashed,dash=0.16cm 0.16cm](1.0,0.1)(0.0,0.9)
\usefont{T1}{ptm}{m}{n}
\rput(5.405625,0.485){if $p=n+1$}
\end{pspicture}

\end{center}

\caption{Expansion of the vertex $v$}
\label{figurecasexpansion}
\end{figure}

To be more specific, in figure \ref{figurecasexpansion}, if $p=n$, then the edge group has finite index in the vertex group of $v$ and $v$ is rigid. If $p=n+1$, there exists a unique non-trivial splitting of $v$ which leaves the $\Z^n$ part elliptic. The vertex must be blown into a loop as in the figure, and the new vertex is then rigid. If $p\geq n+2$ then $v$ is flexible.

\item Conversely some loops must be collapsed. 

A loop $l$ based at a vertex $v$ is called a $1-1$ loop if both inclusion maps of its edge group into $G_v$ are bijections. The fundamental group of the subgraph of groups composed of $v$ and $l$ is a semi-direct product $\Z^n\rtimes_\varphi \Z$. Some of the $1-1$ loops are collapsed, depending on $\varphi$ and on the other edges adjacent to $v$. 

For example, in figure \ref{figurecas1-1}, the loop must be collapsed if and only if $k<n$ : if $k<n$, there are many decompositions of $\Z^{n+1}=\Z^n\rtimes_{id} \Z$ as an HNN extension leaving $\Z^k$ in the vertex group. If $k=n$, there is exactly one, and its edge group is universally elliptic.

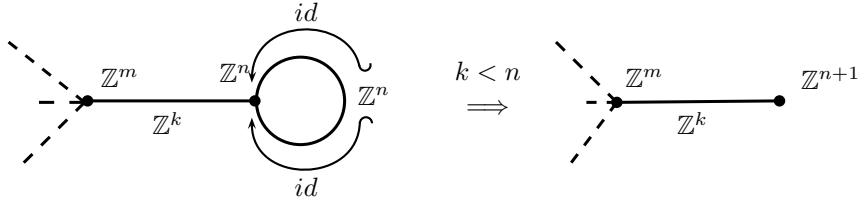
\begin{figure}[!ht]
\begin{center}
\begin{pspicture}(0,-1.5229688)(11.734219,1.5229688)
\psline[linewidth=0.04cm,dotsize=0.07055555cm 2.0]{*-*}(8.0,0.12453125)(10.139532,0.1440624)
\psline[linewidth=0.04cm,dotsize=0.07055555cm 2.0]{*-*}(1.0395312,0.1440624)(3.239531,0.1440624)
\pscircle[linewidth=0.04,dimen=outer](3.8395312,0.1440624){0.6}
\usefont{T1}{ptm}{m}{n}
\rput(8.993751,-0.15093762){$\Z^k$}
\usefont{T1}{ptm}{m}{n}
\rput(8.36375,0.44906238){$\Z^{m}$}
\usefont{T1}{ptm}{m}{n}
\rput(10.81375,0.44906238){$\Z^{n+1}$}
\usefont{T1}{ptm}{m}{n}
\rput(1.46375,0.44906238){$\Z^{m}$}
\usefont{T1}{ptm}{m}{n}
\rput(2.09375,-0.15093762){$\Z^k$}
\usefont{T1}{ptm}{m}{n}
\rput(4.80375,0.2090624){$\Z^{n}$}
\usefont{T1}{ptm}{m}{n}
\rput(2.9837499,0.51109374){$\Z^{n}$}
\psline[linewidth=0.04cm,linestyle=dashed,dash=0.16cm 0.16cm](1.0,0.12453125)(0.2,-0.67546874)
\psline[linewidth=0.04cm,linestyle=dashed,dash=0.16cm 0.16cm](1.0,0.12453125)(0.4,0.12453125)
\psline[linewidth=0.04cm,linestyle=dashed,dash=0.16cm 0.16cm](1.0,0.12453125)(0.0,0.9245312)
\psline[linewidth=0.04cm,linestyle=dashed,dash=0.16cm 0.16cm](8.0,0.12453125)(7.4,-0.67546874)
\psline[linewidth=0.04cm,linestyle=dashed,dash=0.16cm 0.16cm](8.0,0.12453125)(7.6,0.12453125)
\psline[linewidth=0.04cm,linestyle=dashed,dash=0.16cm 0.16cm](8.0,0.12453125)(7.2,0.9245312)
\usefont{T1}{ptm}{m}{n}
\rput(6.3028126,0.52953125){$k<n$}
\rput(6.3028126,0.02953125){$\Longrightarrow$}
\psbezier[linewidth=0.027999999,arrowsize=0.05291667cm 2.0,arrowlength=1.4,arrowinset=0.4]{->}(4.62,-0.13546875)(4.6030965,-0.97302973)(3.2,-0.97546875)(3.2,-0.07790777)
\psbezier[linewidth=0.027999999,arrowsize=0.05291667cm 2.0,arrowlength=1.4,arrowinset=0.4]{->}(4.6141744,0.6045312)(4.4678555,1.2845312)(3.2322712,1.2645313)(3.1986957,0.38453126)
\psarc[linewidth=0.027999999](4.7002497,-0.15262192){0.07975015}{0.0}{180.0}
\psarc[linewidth=0.027999999](4.687334,0.61769086){0.07315958}{179.04517}{19.885164}
\usefont{T1}{ptm}{m}{n}
\rput(3.9114063,-1.0004688){$id$}
\usefont{T1}{ptm}{m}{n}
\rput(3.9114063,1.3295312){$id$}
\end{pspicture} 

\end{center}

\caption{Collapse of a $1-1$ loop}
\label{figurecas1-1}
\end{figure}

\item A similar phenomenon occurs for edges which are not loops and whose group has index $2$ in each adjacent vertex group (see figure \ref{figurecas2-2}). We call an edge of this type a $2-2$ edge.

\begin{figure}[!ht]
\begin{center}
\scalebox{1} 
{
\begin{pspicture}(1,-1.548125)(12.882812,1.548125)
\usefont{T1}{ptm}{m}{n}
\rput(8.301406,0.2739062){$\Longrightarrow$}
\usefont{T1}{ptm}{m}{n}
\rput(5.965938,0.8646874){$\Z^2=\langle c,d\rangle$}
\psline[linewidth=0.04cm](5.2367187,0.32968745)(7.036719,0.7296874)
\psline[linewidth=0.04cm](5.2367187,0.32968745)(6.4367185,-1.0703127)
\psline[linewidth=0.04cm](5.2367187,0.32968745)(6.836718,-0.27031255)
\psline[linewidth=0.04cm](2.76,0.348125)(5.2367187,0.32968745)
\psdots[dotsize=0.12](5.2567186,0.32968745)
\psline[linewidth=0.04cm](10.4167185,0.18968745)(12.216719,0.58968747)
\psline[linewidth=0.04cm](10.4167185,0.18968745)(11.616718,-1.2103126)
\psline[linewidth=0.04cm](10.4167185,0.18968745)(12.016719,-0.41031256)
\psline[linewidth=0.04cm](10.4167185,0.18968745)(9.216719,1.1896875)
\psline[linewidth=0.04cm](10.4167185,0.18968745)(8.616718,-0.6103125)
\psdots[dotsize=0.12](10.4167185,0.18968745)
\usefont{T1}{ptm}{m}{n}
\rput(3.3459377,0.8846874){$\Z^2=\langle a,b\rangle$}
\psline[linewidth=0.04cm](2.8367188,0.32968745)(1.6367185,1.3296874)
\psline[linewidth=0.04cm](2.8367188,0.32968745)(1.0357811,-0.47031257)
\psdots[dotsize=0.12](2.8367188,0.32968745)
\usefont{T1}{ptm}{m}{n}
\rput(4.2523437,-0.06531255){$a^2=c^2$}
\usefont{T1}{ptm}{m}{n}
\rput(10.7223425,0.73468745){$\langle a,c\rangle\times\langle b\rangle$}
\usefont{T1}{ptm}{m}{n}
\rput(1.3923438,-0.6853125){$\langle b^4\rangle$}
\usefont{T1}{ptm}{m}{n}
\rput(1.4323436,0.8546875){$\langle b^3\rangle$}
\usefont{T1}{ptm}{m}{n}
\rput(9.532343,1.3546875){$\langle b^3\rangle$}
\usefont{T1}{ptm}{m}{n}
\rput(8.992344,-0.9053124){$\langle b^4\rangle$}
\usefont{T1}{ptm}{m}{n}
\rput(6.772344,-1.2453125){$\langle d^2\rangle$}
\usefont{T1}{ptm}{m}{n}
\rput(7.132344,-0.4253125){$\langle d^7\rangle$}
\usefont{T1}{ptm}{m}{n}
\rput(6.732344,0.3346875){$\langle d^3\rangle$}
\usefont{T1}{ptm}{m}{n}
\rput(12.032344,-1.3253125){$\langle d^2\rangle$}
\usefont{T1}{ptm}{m}{n}
\rput(12.512343,-0.5453125){$\langle d^7\rangle$}
\usefont{T1}{ptm}{m}{n}
\rput(12.372344,0.2146875){$\langle d^3\rangle$}
\usefont{T1}{ptm}{m}{n}
\rput(4.282344,-0.46531254){$b=d$}
\end{pspicture} 
}

\end{center}
\caption{The group $\langle a,c\rangle$ is a Klein bottle group with trivial intersection with the adjacent edge groups.}
\label{figurecas2-2}
\end{figure}
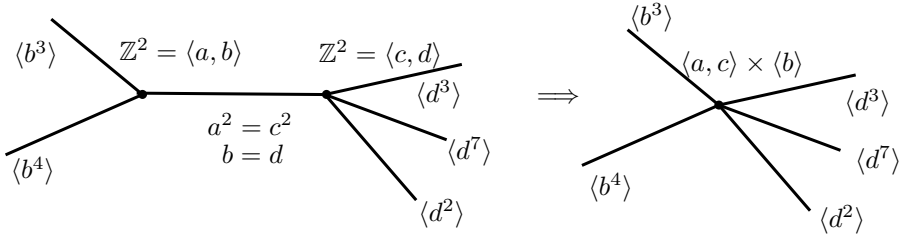
\end{itemize}

Given an edge $e$ of $\Gamma$, call $\Gamma_e$ the subgraph consisting of the single edge $e$ (and its vertices) and $\Pi_e$ its fundamental group.

To obtain the JSJ decomposition of a \GGBS~group over abelian groups, we show that it suffices to expand  vertex groups $G_{v }$ such that $G_{v } / \tilde G_{v }$ is virtually cyclic as in figure \ref{figurecasexpansion}, and collapse edges which are not universally elliptic. These edges have polycyclic groups, and so are $1-1$ loops and $2-2$ edges as in figures \ref{figurecas1-1} and \ref{figurecas2-2}.


\begin{theorem}\label{theoremintro}
    Let $G=\pi_1(\Lambda)$ be a \GGBS~group. For ${v }$ a vertex, let $\tilde G_{v }$ be the subgroup of $G_v$ generated by groups of edges adjacent to ${v }$. A JSJ decomposition of $G$ over abelian groups can be obtained by 
\begin{itemize}
\item  expanding the groups $G_{v }$ such that $G_{v } / \tilde G_{v }$ is virtually cyclic,
\item collapsing $2-2$ edges and $1-1$ loops $e$ which are not universally elliptic.
\end{itemize}
\end{theorem}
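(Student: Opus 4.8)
The plan is to denote by $\Gamma$ the graph of groups produced from $\Lambda$ by the two families of moves, and to verify directly the two defining properties of a JSJ decomposition: that $\Gamma$ is universally elliptic, and that $\Gamma$ dominates every universally elliptic abelian decomposition of $G$. Since $G=\pi_1(\Lambda)$ is the fundamental group of a finite graph of finitely generated groups it is finitely presented, so by Guirardel--Levitt \cite{GL3a} a JSJ deformation space over abelian groups exists; hence it suffices to show that $\Gamma$ lies in it, i.e. that it satisfies these two properties.

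First I would settle \emph{universal ellipticity}. After the moves the edges of $\Gamma$ are of two kinds: the edges inherited from $\Lambda$ that were not collapsed, and the new loops created by the expansions of figure \ref{figurecasexpansion}. For a new loop the edge group is $\tilde G_{v}$, which every abelian splitting must keep elliptic since it is generated by groups already forced to be elliptic by the adjacent structure; so this edge group is universally elliptic. For the inherited edges the key is a characterisation of which abelian edge groups can \emph{fail} to be universally elliptic: an edge group $G_e$ can be de-ellipticised only when $\Pi_e$ is polycyclic and $e$ is a $1-1$ loop or a $2-2$ edge carrying ``enough room to move'', and these are precisely the edges we collapse. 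I would isolate this characterisation as the technical heart of this step, examining how an abelian subgroup of $G$ can be made hyperbolic and showing this can happen only across such polycyclic pieces.

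Next I would prove \emph{domination}, i.e. that each vertex group $G_{v}$ of $\Gamma$ is elliptic in every universally elliptic decomposition $\Gamma'$. By the previous step the incident edge groups of $v$ are universally elliptic, hence elliptic in $\Gamma'$; were $G_{v}$ non-elliptic in $\Gamma'$, the action of $G_{v}$ on the Bass--Serre tree of $\Gamma'$ would yield a nontrivial abelian splitting of $G_{v}$, relative to its incident edge groups, with universally elliptic edge group. I would rule this out according to the value of $G_{v}/\tilde G_{v}$. When this quotient is finite the incident edges already have finite index in $G_{v}$ and $v$ is rigid. When it is virtually cyclic the expansion of figure \ref{figurecasexpansion} realises the unique abelian splitting of $G_{v}$ keeping $\tilde G_{v}$ elliptic, after which the new vertex group again has finite-index incident edge groups and is rigid, so no further such splitting survives. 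When the quotient has rank at least two the vertex is flexible, but any relative abelian splitting of $G_{v}\cong\mathbb Z^{m}$ would be over a subgroup of rank $m-1$, and no such hyperplane is universally elliptic in $G$ (two transverse hyperplanes de-ellipticise one another), so $G_{v}$ cannot be split in a universally elliptic way and remains elliptic in $\Gamma'$.

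The main obstacle I anticipate is the fine analysis of the polycyclic pieces. For a $1-1$ loop the relevant subgroup is $\Pi_e\cong\mathbb Z^{n}\rtimes_\varphi\mathbb Z$, and one must enumerate all its abelian HNN splittings relative to the adjacent edge group $\mathbb Z^{k}$, establishing the sharp dichotomy between $k<n$ (many splittings, edge group not universally elliptic, collapse) and $k=n$ (essentially one splitting, edge group universally elliptic, keep), exactly as in figure \ref{figurecas1-1}; the $2-2$ edges of figure \ref{figurecas2-2} require the analogous study of the virtually $\mathbb Z^{2}$ Klein-bottle groups and of their intersections with the neighbouring edge groups. A secondary difficulty is bookkeeping: I must check that collapsing such an edge destroys neither the universal ellipticity of a neighbouring edge nor rigidity of a neighbouring vertex, and that the expansions and collapses can be performed in any order and terminate, so that $\Gamma$ is well defined.
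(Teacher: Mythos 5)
Your overall strategy coincides with the paper's (Theorem \ref{final}): perform the two moves, then verify universal ellipticity and domination, concluding via \cite{GL3a}. The universal-ellipticity half is a reasonable plan, and the ``technical heart'' you defer is exactly what the paper proves in Corollary \ref{rigidite1} and Propositions \ref{cas1} and \ref{cas2}; your abelian-vertex domination argument (two transverse hyperplanes containing $\tilde G_v$ de-ellipticise one another) is essentially the paper's first case. One minor slip: the edge group of an expansion loop is not $\tilde G_v$ but the hyperplane $H\supset \tilde G_v$ in which $\tilde G_v$ has finite index; its universal ellipticity follows because every element of $H$ has a power in the universally elliptic group $\tilde G_v$ (and universal ellipticity of the adjacent edge groups themselves requires Lemma \ref{adjacent}, not just ``the adjacent structure'').

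The genuine gap is in your domination step: every case you treat assumes the vertex group is abelian, with the trichotomy on $G_v/\tilde G_v$ applied to $G_v\cong\Z^m$. But after your collapsing move, $\Gamma$ has vertices carrying the groups $\Pi_e\cong\Z^n\rtimes_\varphi\Z$ of the collapsed $1-1$ loops and $2-2$ edges, which are non-abelian in general (Klein bottle groups, finite-order twists). These are exactly the flexible vertices, and proving that \emph{they} stay elliptic in every universally elliptic decomposition $\Gamma'$ is the crux of the theorem; your hyperplane argument simply does not apply to them, so domination is not established. The paper handles this as case 2 of the proof of Theorem \ref{final}: if some $x\in\Pi_l$ is hyperbolic in $\Gamma'$, then, $\Pi_l$ being polycyclic, all of $\Pi_l$ acts on the axis of $x$; one picks $h\in G_w$ non-universally elliptic (it exists precisely because the loop $l$ was collapsed) and argues that either $h$ is elliptic in $\Gamma'$, whence $h^2$ fixes the axis pointwise and lies in an edge group of $\Gamma'$, or $h$ is hyperbolic in $\Gamma'$, whence some $h^pt^q$ with $q\neq 0$ ($t$ the stable letter of $l$) is elliptic in $\Gamma'$ and, after squaring, fixes the axis and lies in an edge group of $\Gamma'$ while being hyperbolic in the defining decomposition because $q\neq0$. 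Either way an edge group of $\Gamma'$ contains a non-universally elliptic element, contradicting the universal ellipticity of $\Gamma'$. Without an argument of this type for the non-abelian collapsed vertices, your proof is incomplete.
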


As for hyperbolic groups, in a JSJ of a \GGBS~group, the rigid vertices are also easily identifiable. A vertex is rigid if and only if its vertex group is abelian and virtually generated by the adjacent edge groups.

In order to make theorem \ref{theoremintro} more explicit, we shall now describe the edges which are not universally elliptic.

We first describe the edges $e$ for which $\Pi_e$ has a trivial JSJ decomposition. This is the same as giving the JSJ decomposition of polycyclic $\GBS_n$~groups. As (reduced) decompositions of polycyclic groups must have a line (or a point) as Bass-Serre tree, these decompositions have at most one edge, which may be of two types. 

The first are the $2-2$ edges, that is, non-loop edges whose group is of index $2$ in the groups of the two adjacent vertices. Then $\Pi_e$ is isomorphic to a direct product of a $\Z^n$ by either the Klein bottle group $\langle a, b | a^2=b^2 \rangle$, or the twisted Klein bottle group $\langle a, b | a^2b=ba^2, ab^2=b^2a\rangle$.

The second are $1-1$ loops (whose inclusion maps into the vertex group are bijections), and their groups can be seen as semi-direct product $\Z^n \rtimes_\varphi\Z$:
\begin{theorem}\label{1-1intro}
\begin{enumerate}
 \item If $e$ is an edge of type $2-2$ then $\Pi_e$ has a trivial JSJ decomposition over abelian groups.

\item Let us take $\varphi$ in $\text{Aut}(\Z^n)\simeq GL_n(\Z)$.

The group $G_\varphi=\Z^n\rtimes_{\varphi}\Z$ has a trivial JSJ  decomposition if and only if $\varphi$ can be written in a well-chosen basis $(x,h_1,\dots, h_{n-1})$ in one of the following ways:
\begin{enumerate}
    \item $\left(
          \begin{array}{cc}
            1 & 0 \\ 
            p & M
          \end{array}
          \right)$
          with $M$ an $(n-1)\times(n-1)$ matrix of finite order and $p$ in $\Z^{n-1}$,
    \item $\left(
          \begin{array}{cc}
           -1 & 0 \\ 
           p  & Id_{n-1}
          \end{array}
          \right)$ with $p$ in $\Z^{n-1}$.
 \end{enumerate}
In every other case the semi-direct product $\Z^n\rtimes_{\varphi}\Z$ is a JSJ decomposition.
\end{enumerate}
\end{theorem}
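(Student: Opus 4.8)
The plan is to analyze \emph{all} abelian splittings of the polycyclic groups $\Pi_e$ directly, using the fact recalled in the introduction that every reduced abelian decomposition of a polycyclic group has a line (or a point) as Bass--Serre tree. Such a line action is the same datum as a homomorphism $\rho$ from the group to $\Z$ or to the infinite dihedral group $D_\infty=\Z\rtimes\Z/2$, and in both cases the edge group of the resulting graph of groups is exactly $\ker\rho$ (vertex groups being $\ker\rho$, resp.\ an index-two overgroup, in the reflection case). Thus a subgroup is universally elliptic over abelian groups precisely when it lies in $\ker\rho$ for every such $\rho$ whose kernel is abelian. The JSJ is trivial exactly when no nontrivial abelian splitting has a universally elliptic edge group, and otherwise I will show the maximal universally elliptic splitting is the given HNN extension over $\Z^n$.

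For part (2), write $G_\varphi=\langle\,\Z^n,t\mid tvt^{-1}=\varphi(v)\,\rangle$ and consider $\rho\colon G_\varphi\to\Z$ or $D_\infty$ with $\psi:=\rho|_{\Z^n}\neq 0$, so that $\Z^n$ is non-elliptic. Since $\Z^n$ is abelian and normal, $\psi$ takes values in the translation subgroup, and compatibility of $\rho$ with the HNN relation forces $\psi\circ\varphi=\psi$ (when $\rho(t)$ is a translation) or $\psi\circ\varphi=-\psi$ (when $\rho(t)$ is a reflection). Hence $\psi$ is an integral $\pm1$-eigenvector of $\varphi^{T}$, so $\Z^n$ can fail to be elliptic only when $1$ or $-1$ is an eigenvalue of $\varphi$. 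The decisive point is then the requirement that $\ker\rho$ be abelian. Writing $H=\ker\psi$, a $\varphi$-invariant corank-one sublattice, conjugation by the relevant stable element acts on $H$ as a power $\varphi^{k}$, and I will translate ``edge group abelian'' into ``$\varphi$ restricted to $H$ has finite order''. After adjusting $\rho(t)$ and rescaling $\psi$, the $+1$-eigenvector case yields form (a) ($\varphi=+1$ on $\Z^n/H$, $\varphi|_H$ of finite order), while the $-1$-eigenvector reflection case (where the stable element is forced to be $t^2$, acting as $\varphi^2|_H$) yields form (b) ($\varphi=-1$ on $\Z^n/H$, with $\varphi^2|_H=\mathrm{id}$). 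So $\Z^n$ is non-universally-elliptic exactly for $\varphi$ of type (a) or (b). Pinning down the \emph{precise} normal form here---in particular distinguishing the identity action on $H$ from a general involution and bookkeeping the passage to the index-two orientation subgroup---is the most delicate part of the reflection case.

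To finish part (2) I must promote this to a statement about the whole JSJ. When $\varphi$ is not of type (a) or (b), the computation shows $\Z^n$ is universally elliptic; since $\Z^n$ is normal and every abelian subgroup elliptic in the HNN extension is conjugate into $\Z^n$, the HNN splitting dominates every universally elliptic abelian decomposition and hence is the JSJ. When $\varphi$ is of type (a) or (b) I must show the JSJ is genuinely trivial, i.e.\ that \emph{no} abelian splitting has a universally elliptic edge group. This is the main obstacle: knowing that $\Z^n$ itself is non-elliptic in one splitting is not enough. I will instead enumerate the possible kernels $\ker\rho$ and, for each candidate abelian edge group---notably a group containing a power of $t$, such as the kernel $\langle H, t^2\rangle$ arising in the reflection case---exhibit a transverse splitting in which it is non-elliptic, typically the standard HNN over $\Z^n$ in which powers of the stable letter translate. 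The eigenvalue and finite-order structure of $\varphi$ is exactly what guarantees enough transverse homomorphisms to $\Z$ and $D_\infty$ to defeat every candidate.

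For part (1), a $2$-$2$ edge has $\Pi_e\cong\Z^n\times K$ with $K$ the Klein bottle group $\langle a,b\mid a^2=b^2\rangle$ or the twisted Klein bottle group; both are virtually $\Z^2$, and the twisted one is a central $\Z^2$-extension of $\Z/2\ast\Z/2=D_\infty$, so each carries two transverse cyclic splittings in which the two generators play symmetric roles. I will show these induce on $\Pi_e$ two abelian splittings whose edge groups are each non-elliptic in the other, so that no abelian splitting of $\Pi_e$ is universally elliptic and its JSJ is trivial; the verification reduces to the same eigenvalue bookkeeping as in part (2), now with the underlying automorphism being the order-two map defining $K$. The recurring technical difficulty throughout is the careful treatment of the dihedral (reflection) splittings and the attendant passage to index-two subgroups, which is precisely what separates the behaviour governed by form (b) from that governed by form (a).
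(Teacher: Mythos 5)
Your overall framework is sound and is in essence the paper's own route: reduced abelian splittings of these polycyclic groups are line actions, i.e.\ homomorphisms $\rho$ to $\Z$ or to the infinite dihedral group $D_\infty$ with edge group $\ker\rho$ required to be abelian (the paper's lemma~\ref{1-1} performs this analysis element by element, your transverse splittings are those of lemma~\ref{flexible}, and your endgame of defeating each candidate universally elliptic edge group against the defining HNN extension is the paper's closing argument). Your translation case is also correct and recovers form (a). The genuine gap is precisely the step you defer as ``the most delicate part of the reflection case'': there you derive, correctly, that a reflection-type splitting with abelian edge group exists if and only if there is a $\varphi$-invariant hyperplane $H$ with $\varphi\equiv-1$ on $\Z^n/H$ and $\varphi^2|_{H}=Id$ (the edge group being $H\rtimes\langle t^2\rangle$, with $t^2$ acting by $\varphi^2|_H$), and you then assert that this ``yields form (b)'', i.e.\ $\varphi|_{H}=Id$. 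That identification cannot be carried out by any amount of bookkeeping: the two conditions are inequivalent, and the difference changes the answer.

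Concretely, take $n=2$ and $\varphi=-Id$, so $G_\varphi=\langle a,b,t\mid [a,b]=1,\ tat^{-1}=a^{-1},\ tbt^{-1}=b^{-1}\rangle$. Every primitive $\psi$ satisfies $\psi\circ\varphi=-\psi$ and $\varphi^2|_{\ker\psi}=Id$, so your reflection construction applies: sending $a$ to the unit translation, $b$ to the identity and $t$ to a reflection defines a surjection $\rho$ onto $D_\infty$ whose kernel $\langle b,t^2\rangle\cong\Z^2$ is abelian (note that $t^2$ is central). The associated splitting is the $2-2$ amalgam $\langle t,b\rangle\ast_{\langle t^2,b\rangle}\langle at,b\rangle$ of two Klein bottle groups over $\Z^2$, in which $a$ is hyperbolic; conversely $t^2$ is hyperbolic in the defining HNN extension, and the same holds for all the other reflection kernels, so no nontrivial splitting of $G_{-Id}$ is universally elliptic and its JSJ is trivial. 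Yet $-Id$ has characteristic polynomial $(X+1)^2$, hence is conjugate neither to form (a) (which has eigenvalue $1$) nor to form (b) (eigenvalues $1$ and $-1$). So completing your reflection case honestly does not prove the stated dichotomy, it contradicts its ``in every other case'' clause; what your computation actually establishes is that the second alternative should read ``$\varphi\equiv-1$ on $\Z^n/H$ and $\varphi|_{H}$ an involution''. You are in good company: the paper's own proof of lemma~\ref{1-1} breaks at exactly this point, where it claims that an elliptic stable letter $t$ commutes with $H$ --- an elliptic isometry of a line may be a reflection, in which case only $t^2$ is forced to commute with $H$. Finally, two smaller points: in part (1) your plan is substantively fine (there $\varphi|_{H}=Id$ genuinely holds), but exhibiting two mutually transverse splittings does not by itself give trivial JSJ; you must run your $\rho$-enumeration to rule out \emph{every} nontrivial universally elliptic splitting, just as in part (2).
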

This theorem is proved in  section \ref{section22} and \ref{section11}.

By \cite[lemma 4.10]{GL3a}, we know that the edges $e$ which are not universally elliptic are those for which $\Pi_e$ has a trivial JSJ decomposition relative to adjacent edge groups. We therefore have to understand how $\Pi_e$ is embedded in the whole group. More precisely, we look at the way the adjacent edge groups inject into $\Pi_e$. We obtain the following theorem which makes theorem \ref{theoremintro} explicit.

Define a \textit{hyperplane} of $\Z^n$ as the kernel of a morphism from $\Z^n$ to $\Z$.
\begin{theorem}\label{simplesintro}

Let $G=\pi_1(\Gamma)$ be a \GGBS~group. For ${v }$ a vertex, let $\tilde G_{v }$ be the subgroup of $G_v$ generated by groups of edges adjacent to ${v }$. A JSJ decomposition of $G$ over abelian groups can be obtained by 
\begin{itemize}
\item  expanding the groups $G_{v }$ such that $G_{v } / \tilde G_{v }$ is virtually cyclic,
\item collapsing $1-1$ edges $e$ such that 
 $\Pi_e$ may be decomposed as in theorem \ref{1-1intro} with all adjacent edge groups included in the hyperplane $\langle h_1, \dots, h_{n-1}\rangle$. 
\item collapsing $2-2$ edges $e$  with vertices $v$ and $v'$, 
such that there is a hyperplane $H$ of $G_{e }$ which is also a hyperplane of $G_{v }$ and $G_{{v' }}$ and which contains all groups of adjacent edges, $G_{e }$ excepted.
\end{itemize}
\end{theorem}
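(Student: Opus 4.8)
The plan is to derive Theorem \ref{simplesintro} from Theorem \ref{theoremintro} by making the phrase ``not universally elliptic'' completely explicit. Since the expansion step is verbatim the same in both statements, nothing new is needed there and I would simply invoke Theorem \ref{theoremintro}; the entire content is to identify, among $2$-$2$ edges and $1$-$1$ loops, exactly those that fail to be universally elliptic. By \cite[lemma 4.10]{GL3a}, an edge $e$ is not universally elliptic if and only if $\Pi_e$ has a trivial JSJ decomposition relative to the adjacent edge groups. Thus the whole problem reduces to translating this relative triviality into the concrete inclusion and hyperplane conditions of the statement.

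First I would treat the $1$-$1$ loops, where $\Pi_e \cong \Z^n \rtimes_\varphi \Z$. If $\varphi$ is not of the form (a) or (b) of Theorem \ref{1-1intro}, then even the absolute JSJ of $\Pi_e$ is the given HNN splitting over $\Z^n$, which is nontrivial and already keeps every adjacent edge group (all lying in the vertex group $\Z^n$) elliptic; hence the relative JSJ is nontrivial and $e$ is universally elliptic, so it is not collapsed, matching the statement. When $\varphi$ is of form (a) or (b), $\Pi_e$ is flexible: its absolute JSJ is a single vertex, so it carries a whole deformation space of abelian splittings, none universally elliptic. The key structural computation is to show that the subgroup which stays elliptic in \emph{every} such splitting is exactly the $\varphi$-invariant hyperplane $\langle h_1, \dots, h_{n-1}\rangle$, on which $\varphi$ acts with finite order in case (a) and trivially in case (b). Granting this, I would argue as follows: if all adjacent edge groups lie inside $\langle h_1, \dots, h_{n-1}\rangle$, they remain elliptic throughout the deformation space, so imposing them as constraints does not rigidify $\Pi_e$, the relative JSJ is still trivial, and $e$ is collapsed; conversely, if some adjacent edge group is not contained in this hyperplane, it fails to be elliptic in some of these splittings, which rigidifies the relative deformation space and makes the relative JSJ nontrivial, so that $e$ is universally elliptic and is not collapsed.

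The $2$-$2$ edges are handled in parallel. Here $\Pi_e$ is a direct product of $\Z^n$ with a Klein bottle or twisted Klein bottle group and, by Theorem \ref{1-1intro}(1), always has trivial absolute JSJ, so $\Pi_e$ is again flexible with an ambient deformation space of splittings. The analogue of the invariant hyperplane is a hyperplane $H$ of $G_e$ that is simultaneously a hyperplane of $G_v$ and $G_{v'}$: I would show that such an $H$ is precisely a subgroup remaining elliptic throughout the flexible family, the alternative splittings being obtained by ``rotating'' the $2$-$2$ amalgam transverse to $H$. The relative JSJ is then trivial exactly when one can choose such an $H$ containing all the other adjacent edge groups ($G_e$ excepted): in that case all the constraints are already met by the flexible family and $e$ collapses, while if no such common hyperplane contains the adjacent groups, every relative splitting keeps $G_e$ elliptic and $e$ is universally elliptic.

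The main obstacle is the two ``rigidification'' directions, namely proving that an adjacent edge group lying outside the relevant hyperplane genuinely forces the relative JSJ to be nontrivial. Exhibiting one flexible splitting is easy; the difficulty is to \emph{classify} all abelian splittings of the polycyclic group $\Pi_e$ and to verify that an offending adjacent subgroup is elliptic in none of the splittings that would also split $G_e$, so that no nontrivial relative splitting can avoid making $G_e$ universally elliptic. This bookkeeping is where the interaction between the translation part $p$ and the finite-order part of $\varphi$ in the $1$-$1$ case, and between the (twisted) Klein bottle factor and the $\Z^n$ factor in the $2$-$2$ case, must be controlled carefully; identifying the maximal everywhere-elliptic subgroup as exactly the stated hyperplane is the crux on which both equivalences rest.
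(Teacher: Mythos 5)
Your reduction (via Theorem \ref{theoremintro} and the relative-JSJ criterion of \cite[lemma 4.10]{GL3a}) is consistent with the paper's framing, but the step you yourself identify as the crux is not only left unproven, it is false as stated. You claim that when $\Pi_e$ is flexible there is a canonical hyperplane, namely ``the subgroup which stays elliptic in every splitting'', equal to $\langle h_1,\dots,h_{n-1}\rangle$, and you run both directions of the equivalence through it. But the hyperplane appearing in Theorem \ref{1-1intro} is not unique, and the intersection of the elliptic sets over all splittings is in general much smaller than a hyperplane. Concretely, for $\varphi=Id_n$ one has $\Pi_e\cong\Z^{n+1}$ and \emph{every} hyperplane of $G_v$ satisfies the conditions, so the everywhere-elliptic subgroup is trivial; for $\varphi=\mathrm{diag}(-1,1)$ acting on $\Z^2$ (i.e.\ $\Pi_e\cong K\times\Z$), both $\langle h_1\rangle$ (form (b)) and $\langle x\rangle$ (form (a) with $M=(-1)$) are valid hyperplanes. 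This breaks your ``conversely'' direction: an adjacent edge group may miss one valid hyperplane yet lie in another, in which case $e$ is still not universally elliptic and must be collapsed. The correct statement --- the one the paper proves as Propositions \ref{cas1} and \ref{cas2} --- is existential: $G_e$ fails to be universally elliptic iff \emph{some} decomposition $G_v=\langle x\rangle\times H$ (resp.\ some common hyperplane $H$ of $G_e$, $G_v$ and $G_{v'}$) with the stated properties contains all adjacent edge groups.

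Even with the quantifier corrected, the two hard implications still have to be carried out, and your proposal defers both. For the ``universally elliptic'' direction, the paper takes an arbitrary decomposition $\Gamma'$ of $G$ in which $G_e$ is not elliptic, uses that $\Pi_e$ is polycyclic (its Bass-Serre tree is a line) to apply Lemma \ref{1-1} and produce a $\varphi$-invariant hyperplane $H$ depending on $\Gamma'$, and then --- this is an ingredient entirely absent from your sketch --- invokes Corollary \ref{rigidite1} together with Lemma \ref{adjacent} (two distinct adjacent $1$-$1$ loops or $2$-$2$ edges cannot both fail to be universally elliptic) to conclude that the adjacent edge groups are universally elliptic, hence elliptic in $\Gamma'$, hence contained in $H$. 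Without this adjacency lemma you cannot even assert that the splitting of $\Pi_e$ induced by a splitting of $G$ is a splitting \emph{relative} to the adjacent edge groups, which your appeal to relative JSJ theory presupposes. For the converse direction, exhibiting a flexible splitting of $\Pi_e$ (Lemma \ref{flexible}, Corollary \ref{2-2}) is indeed the easy part, but one must still extend it to a decomposition of all of $G$; the paper does this by an explicit gluing/refinement construction in the proofs of Propositions \ref{cas1} and \ref{cas2}, which your sketch compresses into the phrase ``does not rigidify''. So the proposal has the right overall shape, but its central structural claim must be abandoned and replaced by the existential, decomposition-by-decomposition argument that the paper actually gives.
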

We prove this proposition in sections \ref{sectioninteraction} and \ref{finalsection}.
We prove in section \ref{algo} that the construction of the JSJ is algorithmic.

From theorem \ref{simplesintro}, we obtain the JSJ decomposition of \GGBS~groups over abelian groups with bounded rank:

\begin{theorem}
 Let $G=\pi_1(\Gamma)$ a \GGBS~group, and $n\in \N$. Suppose that $\Gamma$ is a JSJ decomposition over free abelian groups. Then a JSJ decomposition over free abelian groups of rank $\leq n$ may be obtained by collapsing every edge of $\Gamma$ with group of rank $> n$.
 \end{theorem}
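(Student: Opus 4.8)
The plan is to prove that the graph of groups $\Gamma_{\le n}$ obtained from $\Gamma$ by collapsing every edge whose group has rank $>n$ is a JSJ over the class $\mathcal A$ of finitely generated free abelian groups of rank $\le n$. Write $\mathcal B$ for the class of \emph{all} finitely generated free abelian groups, so $\mathcal A\subseteq\mathcal B$ and $\Gamma$ is a $\mathcal B$-JSJ. I will check the two defining properties: that $\Gamma_{\le n}$ is universally elliptic over $\mathcal A$, and that it dominates every $\mathcal A$-universally elliptic decomposition. Universal ellipticity is immediate: each edge group of $\Gamma_{\le n}$ is an edge group of $\Gamma$ of rank $\le n$, hence lies in $\mathcal A$, and being an edge group of the $\mathcal B$-JSJ it is universally elliptic over $\mathcal B$; since every $\mathcal A$-decomposition is a $\mathcal B$-decomposition, $\mathcal B$-universal ellipticity implies $\mathcal A$-universal ellipticity. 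The same remark — which I will use throughout — shows that \emph{every} edge group of $\Gamma$, including those of rank $>n$, is elliptic in the Bass--Serre tree $S$ of any $\mathcal A$-tree.

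The core is domination, which reduces to showing that each vertex group of $\Gamma_{\le n}$ is elliptic in the tree $S$ of an arbitrary $\mathcal A$-universally elliptic $\Delta$. A vertex $w$ of $\Gamma_{\le n}$ is the image of a maximal connected subgraph $\Gamma_w\subseteq\Gamma$ spanned by edges of rank $>n$, with $G_w=\pi_1(\Gamma_w)$. First suppose $\Gamma_w$ carries at least one edge. Then every vertex $v_i$ of $\Gamma_w$ meets a rank-$>n$ edge, so $G_{v_i}\cong\Z^{m_i}$ with $m_i>n$. Since $\Z^{m}$ splits only over subgroups of rank $m-1$, a $G_{v_i}$ with $m_i\ge n+2$ cannot split over a group of $\mathcal A$ and is $S$-elliptic; and if $m_i=n+1$ the incident rank-$>n$ edge group has finite index in $G_{v_i}$ and is $S$-elliptic, again forcing $G_{v_i}$ to be $S$-elliptic. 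With all vertex groups of $\Gamma_w$ elliptic there is a $G_w$-equivariant map $f$ from the tree of $\Gamma_w$ to the minimal $G_w$-subtree of $S$. If $f$ were nonconstant some edge $\tilde e$ of that tree would have nondegenerate image, and since its rank-$>n$ stabilizer $G_{\tilde e}$ fixes this image pointwise it would embed into an edge stabilizer of $S$, which has rank $\le n$ — impossible. Hence $f$ is constant and $G_w$ is $S$-elliptic.

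It remains to treat a vertex $w=v$ unchanged from $\Gamma$ (no incident edge of rank $>n$), with $G_v\cong\Z^m$. If $m\ge n+2$ then $G_v$ cannot split over $\mathcal A$; and if $v$ is rigid its group is $\mathcal B$- hence $\mathcal A$-universally elliptic. The delicate case is a flexible $v$ with $m\le n+1$, where $G_v$ genuinely splits over rank-$(m-1)$ hyperplanes lying in $\mathcal A$. Here I argue by contradiction. If $G_v$ splits in $S$, the induced splitting is over a hyperplane $H$ of rank $m-1$ with $H\le\mathrm{Stab}_S(e)$ for some edge $e$ of $S$, and the adjacent edge groups $\tilde G_v$, being $S$-elliptic, satisfy $\tilde G_v\le H$. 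Flexibility gives $\dim(G_v/\tilde G_v)\ge 2$, so there is a second hyperplane $H'\supseteq\tilde G_v$ with $H\not\subseteq H'$; blowing up $v$ along the splitting of $G_v$ over $H'$ yields an $\mathcal A$-decomposition $\Gamma'$ of $G$ in which $H$ is \emph{not} elliptic. But $\mathrm{Stab}_S(e)$ is $\mathcal A$-universally elliptic, hence elliptic in $\Gamma'$, and therefore so is its subgroup $H$ — a contradiction. Thus $G_v$ is $S$-elliptic.

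The hard part is exactly this last point: a vertex flexible over the larger class $\mathcal B$ might a priori acquire a genuine $\mathcal A$-universally elliptic splitting once the rank is bounded, which would force extra refinement and break the statement. The argument above rules this out by playing the hyperplane coming from $S$ against a transverse hyperplane available inside $\mathcal A$, together with the elementary fact that a subgroup of an $\mathcal A$-universally elliptic edge group is again elliptic in every $\mathcal A$-decomposition. Once every vertex group of $\Gamma_{\le n}$ is shown to be $S$-elliptic for all $\mathcal A$-universally elliptic $\Delta$, domination follows, and combined with the universal ellipticity of the first paragraph this identifies $\Gamma_{\le n}$ as a JSJ of $G$ over free abelian groups of rank $\le n$.
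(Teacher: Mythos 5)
Your overall strategy --- verifying directly that the collapsed decomposition $\Gamma_{\le n}$ is universally elliptic over the class $\mathcal A$ of free abelian groups of rank $\le n$ and that it dominates every $\mathcal A$-universally elliptic decomposition --- is a legitimate alternative to the paper's route (the paper instead invokes \cite[Proposition 7.1]{GL3a}, reducing the theorem to showing that no vertex group of $\Gamma$ admits a nontrivial $n$-universally elliptic splitting). But your execution has a genuine gap: you assume throughout that every vertex group of $\Gamma$ is free abelian ($G_{v_i}\cong\Z^{m_i}$ in your first case, $G_v\cong\Z^m$ in the second). For a \GGBS~group, a JSJ decomposition over free abelian groups generally has \emph{non-abelian} vertex groups: by theorem \ref{final}, flexible vertices arise by collapsing non-universally elliptic $1-1$ loops and $2-2$ edges, so they carry polycyclic groups $\Z^p\rtimes_\varphi\Z$ with $\varphi\neq Id$; the paper's own proof accordingly writes each vertex group as $\Z^p\rtimes\Z$, ``the product may be direct''. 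Such vertices occur both inside a collapsed subgraph $\Gamma_w$ and as unchanged vertices: for instance, a vertex carrying $\Z^{n+2}\rtimes_\varphi\Z$ (coming from a collapsed loop with modulus $\varphi$ of finite order $\neq Id$) whose other incident edge carries the $\varphi$-invariant hyperplane of rank $n+1$. Your Case A deduction (``meets a rank $>n$ edge, so $G_{v_i}\cong\Z^{m_i}$'') is simply false for these, and your Case B hyperplane-transversality trick is specific to abelian groups; so the proof is silent on exactly the vertices that make the statement delicate. Handling them requires the classification of splittings of $\Z^p\rtimes\Z$ (lemma \ref{1-1}, proposition \ref{cas1}) or at least an invariant-line argument for polycyclic groups acting on $\mathcal A$-trees.

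There is a second, related gap in your ``delicate case''. The witness $\Gamma'$, obtained by blowing up $v$ over $H'$ \emph{inside $\Gamma$}, is claimed to be an $\mathcal A$-decomposition; it is not, because it retains all the other edges of $\Gamma$, and in the situation the theorem addresses some of those have rank $>n$. Hence the key step ``$\mathrm{Stab}_S(e)$ is $\mathcal A$-universally elliptic, hence elliptic in $\Gamma'$'' does not apply to $\Gamma'$ as you built it. This is repairable --- collapse the rank $>n$ edges of $\Gamma'$ and check that an element of $H\setminus H'$ stays hyperbolic, since its axis crosses the new loop edge, which is not collapsed --- but this passage from an unbounded-rank witness to a rank $\le n$ witness is precisely the crux of the theorem (a splitting flexible over all abelian groups could a priori become universally elliptic once ranks are bounded), so it must be made explicit. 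A smaller point of the same kind: ``flexibility gives $\mathrm{rank}(G_v/\tilde G_v)\ge 2$'' uses more than flexibility; it uses that $\Gamma$ is a JSJ and therefore contains no vertex with $G_v/\tilde G_v$ infinite virtually cyclic, since such a vertex could be expanded into a universally elliptic decomposition that $\Gamma$ does not dominate (cf.\ lemma \ref{stable}); this deserves justification rather than assertion.
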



\section{Preliminaries}
Let $G$ be a finitely generated group. We denote by $T$ the Bass-Serre tree associated to a finite graph of groups decomposition $\Gamma$ of $G$. For a vertex $v$ (resp. an edge $e$) of $\Gamma$, we denote $G_v(\Gamma)$ (resp $G_e(\Gamma)$) its stabilizer. Most of the time $\Gamma$ will be omitted.

For $e$ an (unoriented) edge of $T$ with vertices $v$ and $v'$, we call the \textit{type} of $e$ the couple $m-n$ of potentially infinite numbers with $m\leq n$ such that $G_e$ is of index $m$ in $G_v$ and $n$ in $G_{v'}$. For example, an edge whose stabilizer equals the stabilizer of its vertices, is of type $1-1$. This type only depends on the orbit of the edge, so we define the type of an edge of the graph of groups as the type of an edge in $T$ representing it. 
We say that an edge $e$ of $T$ is a loop if its vertices are in the same orbit under the action of $G$. A graph of groups is \textit{reduced} if all edges of type $1-n$ are loops in the graph. Every decomposition may be reduced by contracting successively the edges of type $1-n$ which are not loops.

Given a Bass-Serre tree $T$, an element $g\in G$ is \textit{elliptic} if it fixes a vertex. Otherwise $g$ is said to be \textit{hyperbolic}. The \textit{characteristic space} of $g$ is the minimal subtree of $T$ containing the vertices $v$ such that the distance between $v$ and $g\cdot v$ is minimal (seeing $T$ as a metric space with all edges of lenght $1$).  When $g$ is elliptic, it is the set of all fixed edges and vertices. When $g$ is hyperbolic this is the only line on which $g$ acts by a translation. In this case we call it the \textit{axis} of $g$.
A subgroup $H$ of $G$ is \textit{elliptic} if it fixes a vertex or equivalently when $H$ is finitely generated if all of its elements are elliptic. In the case of finitely generated abelian groups, the ellipticity of a generating set implies the ellipticity of the whole group.

From now on and for the rest of the paper, the decompositions we consider will be over free abelian groups, meaning that every edge stabilizer is finitely generated free abelian.

An element or a subgroup of $G$ is \emph{universally elliptic} if it is elliptic in the Bass-Serre trees of all graph of groups decompositions. An edge is universally elliptic if it carry a universally elliptic group. A decomposition is \textit{universally elliptic} if all edge are universally elliptic. A graph of group decomposition $\Gamma$ dominates an other decomposition $\Gamma'$ if every elliptic subgroups of $\Gamma$ is elliptic in $\Gamma'$. A \textit{JSJ decomposition} is a universally elliptic decomposition which dominates every other universally elliptic decomposition (\cite{GL3a}).

If $e$ is an edge of a Bass Serre tree, with vertices $v_1$ and $v_2$, of type $1-1$ representing a loop $l$ in the graph of groups, let $t$ be a hyperbolic element such that $t\cdot v_1 = v_2$. We call \emph{modulus} of $l$ the linear map $\varphi\in Aut(G_{v_1})$ such that for all $x$ in $G_{v_1}$ we have $\varphi(x)=txt^{-1}$.  As $G_v\simeq G_l \simeq Z^n$, we can see $\varphi$ as an element of $GL_n(\Z)$. Up to conjugacy, the modulus does not depend neither on the choice of $t$ nor on the choice of $e$ representing $l$ but is switched to its inverse if we change the orientation of $l$.

\section{Universally elliptic edges}\label{sectionuniverselle}

 \begin{proposition}\label{centralisateur}

  Let ${G }=\pi_1({\Gamma })$ be a graph of groups and $T$ its Bass-Serre tree. Let ${x }$ be a hyperbolic element of ${G }$. Then the centralizer $C_{G }({x })$ of ${x }$ in ${G }$ is a semi-direct product ${E}\rtimes {H }$ with ${E}$ a subgroup of an edge stabilizer of $T$ and ${H }$ a cyclic subgroup of ${G }$ generated by a hyperbolic element.

  When all edge groups of $\Gamma$ are finitely generated free abelian, the centralizer of a hyperbolic element is a polycyclic group $\Z^n\rtimes\Z$.
 \end{proposition}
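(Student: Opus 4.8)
The plan is to let $C=C_G(x)$ act on the axis $A_x$ of the hyperbolic element $x$ and read off the structure directly from this action. The first step is to observe that the axis is a canonical invariant of $x$: for every $g\in G$ one has $g\cdot A_x=A_{gxg^{-1}}$. In particular, if $g\in C$ then $gxg^{-1}=x$ forces $g\cdot A_x=A_x$, so the whole centralizer preserves $A_x$ setwise. Restricting the action therefore gives a homomorphism $\tau\colon C\to \mathrm{Isom}(A_x)$, where $A_x$ is a line in the simplicial tree $T$.

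The second step is to compute the image of $\tau$. Every isometry of a line is either a translation or a reflection, and conjugating a nontrivial translation by a reflection yields the inverse translation. Since $x$ acts on $A_x$ as a translation by its (nonzero) translation length, and every $g\in C$ satisfies $gxg^{-1}=x$, no element of $C$ can map to a reflection; otherwise $x$ and $x^{-1}$ would induce the same translation on $A_x$. Hence $\tau(C)$ consists solely of translations. As $T$ is simplicial these translations have integer length, so $\tau(C)$ is a subgroup of $\Z$; it is nontrivial because it contains $\tau(x)$, and is therefore infinite cyclic.

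The third step identifies the kernel $E=\ker\tau$: an element of $E$ fixes $A_x$ pointwise, hence fixes every edge on $A_x$, and so lies in the stabilizer of any such edge. This yields a short exact sequence $1\to E\to C\to \Z\to 1$ with $E$ contained in an edge stabilizer. Since $\Z$ is free the sequence splits; choosing a preimage $h$ of a generator of $\tau(C)$ produces a decomposition $C=E\rtimes H$ with $H=\langle h\rangle\cong\Z$. As $h$ maps to a nonzero translation of $A_x$, it is hyperbolic, which proves the first assertion. For the second assertion, when all edge groups are finitely generated free abelian the subgroup $E$ of an edge stabilizer is itself finitely generated free abelian, say $E\cong\Z^n$, so $C\cong\Z^n\rtimes\Z$ is polycyclic.

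The two steps requiring care are the canonicity of the axis (so that $C$ genuinely preserves $A_x$) and the exclusion of reflections in the image; once these are in place, the semidirect product structure and the polycyclic conclusion follow formally. I expect the reflection argument to be the most delicate point, since it is precisely where the hypothesis that $x$ is hyperbolic, i.e. has nonzero translation length, enters: without it one could not rule out flips, and the image of $\tau$ might fail to be a group of translations.
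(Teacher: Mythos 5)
Your proposal is correct and follows essentially the same route as the paper: let the centralizer act on the axis of $x$, observe that this action is by translations (giving a homomorphism to $\Z$), and identify the kernel with a subgroup of an edge stabilizer since it fixes the axis pointwise. The paper states these steps very tersely, while you supply the details it leaves implicit --- the canonicity of the axis, the exclusion of reflections via $gxg^{-1}=x$, and the splitting of the resulting exact sequence --- all of which are accurate.
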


 \begin{proof} 

  The group $C_{G }({x })$ acts on the axis of ${x }$ by translation. This action defines a morphism from $C_G({x })$ to $\Z$. The kernel of this morphism fixes the axis pointwise and so belongs to the stabilizer of the axis.
 \end{proof}

  The centralizers of hyperbolic elements have a very specific structure, which is not the case for elliptic ones. This forces most of edge groups of \GGBS~group to be universally elliptic:

 \begin{corollary}\label{rigidite1}

  Let ${G }=\pi_1({\Gamma })$ be a \GGBS~group with $\Gamma$ reduced. Let ${e }$ be an edge of $\Gamma$.
    \begin{enumerate}
     \item If ${e }$ is not a loop of $\Gamma$ and is not a $2-2$ edge then $G_{e }$ is universally elliptic.
     \item If ${e }$ is a loop of $\Gamma$ but is not $1-1$ then $G_{e }$ is universally elliptic.
    \end{enumerate}
 \end{corollary}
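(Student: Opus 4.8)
The plan is to show that edges which are ``generic'' (not loops, not $2$-$2$) and loops which are not $1$-$1$ all carry edge groups whose structure forces universal ellipticity, and the natural tool is Proposition~\ref{centralisateur}. The key idea is the following: if an edge group $G_e \cong \Z^n$ fails to be universally elliptic, then there is some decomposition in which some generator of $G_e$ becomes hyperbolic, hence $G_e$ sits inside the centralizer of that hyperbolic element. By the proposition, such a centralizer is polycyclic of the form $\Z^m \rtimes \Z$, and in particular its maximal normal abelian subgroup that is pointwise-fixing the axis has corank exactly $1$. I would first make precise the statement ``$G_e$ not universally elliptic implies $G_e$ contains a hyperbolic element in some tree'': this is essentially the definition, since a finitely generated abelian group is elliptic iff all its elements are (as noted in the Preliminaries), so non-ellipticity in some tree $T'$ means some $g \in G_e$ is hyperbolic in $T'$.

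\medskip
\noindent\textbf{Main argument.} Suppose for contradiction that $G_e$ is not universally elliptic, so in some tree $T'$ there is a hyperbolic $g \in G_e$. Since $G_e$ is abelian, all of $G_e$ commutes with $g$, so $G_e \subseteq C_G(g)$. By Proposition~\ref{centralisateur}, $C_G(g) = E \rtimes H$ with $E$ contained in an edge stabilizer of $T'$ (so $E$ fixes the axis of $g$ pointwise) and $H = \langle t \rangle$ cyclic generated by a hyperbolic element. The subgroup $E' := G_e \cap E$ is exactly the set of elements of $G_e$ that are elliptic in $T'$ (they fix the axis), and it is a subgroup of the free abelian $G_e \cong \Z^n$ of rank at most $n$; moreover the image of $G_e$ under the translation-length morphism $C_G(g) \to \Z$ is nontrivial (it contains $g$), so $G_e / E'$ is infinite cyclic and $E'$ has rank exactly $n-1$. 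I would then exploit this structure together with the hypotheses of the corollary. The point is that $E'$, being of index-infinity-by-$\Z$ inside $G_e$ and fixing a line, gives a splitting of $G$ (or of the relevant vertex group) in which $G_e$ is split off, and this is incompatible with $\Gamma$ being reduced and $e$ being of the forbidden types.

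\medskip
\noindent\textbf{The two cases.} For part~(1), $e$ is not a loop and not $2$-$2$. Because $\Gamma$ is reduced, a non-loop edge cannot be of type $1$-$n$, so both inclusions $G_e \hookrightarrow G_v$, $G_e \hookrightarrow G_{v'}$ have index $\geq 2$, and ``not $2$-$2$'' means at least one index is $\geq 3$ (or one is infinite). The rank-$(n-1)$ subgroup $E'$ fixing the axis of $g$ would, together with the index data, produce a \GGBS~structure on $C_G(g)$ that is too large to embed as $\Z^{n-1}\rtimes\Z$ unless the indices collapse to the $2$-$2$ case; I would make this quantitative by computing the index of $G_e$ inside the subgroup $\langle G_v, G_{v'}\rangle$ or inside $C_G(g)$ and deriving that the vertex groups would have to have $G_e$ of index $\le 2$ on both sides. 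For part~(2), $e$ is a loop but not $1$-$1$, so at least one inclusion map is onto a proper finite-index subgroup; I would run the same centralizer analysis and show that a genuinely hyperbolic $g \in G_e$ forces the modulus-type data of the loop to be an isomorphism on both sides, i.e.\ forces type $1$-$1$, a contradiction.

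\medskip
\noindent\textbf{Main obstacle.} The hard part will be converting the abstract structural constraint ``$C_G(g) \cong \Z^{n-1}\rtimes \Z$ with $G_e$ of corank $1$'' into a sharp contradiction with the index hypotheses on $e$. The delicate point is that non-ellipticity of $G_e$ in an \emph{arbitrary} tree $T'$ need not come from a splitting visibly related to $\Gamma$; I must argue intrinsically, using only that $C_G(g)$ is polycyclic of Hirsch length at most $n+1$ and that $E' = G_e\cap E$ has rank $n-1$, to constrain how $G_e$ can sit relative to its adjacent vertex groups in $\Gamma$. Carefully tracking the indices $[G_v : G_e]$ and $[G_{v'}:G_e]$ against the available ``room'' in the at-most-$\Z^{n-1}\rtimes\Z$ centralizer is where the real work lies, and this is precisely what singles out the $2$-$2$ and $1$-$1$ cases as the only possible non-universally-elliptic edges.
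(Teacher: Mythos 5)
Your overall strategy is the same as the paper's: invoke Proposition~\ref{centralisateur} to say that the centralizer of a hyperbolic element is polycyclic ($\Z^m\rtimes\Z$), and then exhibit, inside the centralizer of an element of $G_e$, a subgroup that no polycyclic group can contain. For part~(1) your sketch is essentially on track, and the one step you leave implicit is exactly the step that does all the work: since $G_v$ and $G_{v'}$ are abelian and contain $G_e$, the whole amalgam $G_v\ast_{G_e}G_{v'}$ lies in $C_G(g)$ for \emph{every} $g\in G_e$, and an amalgam of type at least $2-3$ (which is what ``reduced, non-loop, not $2-2$'' gives you) contains a non-abelian free group, contradicting solvability of $\Z^m\rtimes\Z$. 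Note this argument needs no hyperbolic $g$ chosen first and no analysis of $E'=G_e\cap E$; your computation of the rank of $E'$ is a detour that is never used, and the contradiction is cleaner when phrased as ``free subgroup inside a polycyclic group.''

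The genuine gap is part~(2). Your plan --- ``run the same centralizer analysis and show a hyperbolic $g\in G_e$ forces the loop to be of type $1-1$'' --- cannot work as stated, because the amalgam argument breaks down precisely for the two loop types that remain after the first argument: for a $2-2$ loop the amalgam $G_{\tilde v}\ast_{G_{\tilde e}}G_{\tilde v'}$ is an extended Klein bottle group, which \emph{is} polycyclic, so no contradiction arises; and for a $1-k$ loop there is no amalgam at all, since $G_{\tilde e}=G_{\tilde v}$. (Loops of type $m-n$ with $m\geq 2$, $n\geq 3$ are still covered by the amalgam argument, a case split you do not make.) The paper supplies two additional constructions, both exploiting an element $t$ with $t\cdot\tilde v=\tilde v'$, which exists exactly because $e$ is a loop of $\Gamma$. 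For a $2-2$ loop one passes to squares: if $\alpha=\beta^2$ with $\beta\in G_{\tilde e}$, then $t\alpha t^{-1}$ is a square of an element of $G_{\tilde v'}$, hence lies in $G_{\tilde e}$, so $C_G(\alpha)$ contains not only $G_{\tilde v}$ and $tG_{\tilde v}t^{-1}$ but also $t^{-1}G_{\tilde v}t$, and the resulting amalgam $tG_{\tilde v}t^{-1}\ast_{G_{\tilde e}}(G_{\tilde v}\ast_{t^{-1}G_{\tilde e}t}t^{-1}G_{\tilde v}t)$ has type $2-\infty$ and contains a free group; one then concludes because an element is hyperbolic in a tree if and only if its square is. For a $1-k$ loop with $k>1$, the inclusion $G_{\tilde v}\subsetneq tG_{\tilde v}t^{-1}$ gives a strictly increasing union $\bigcup_{n>0}t^nG_{\tilde v}t^{-n}$ of abelian groups inside the centralizer of any element of $G_{\tilde e}$; this union is not finitely generated, whereas every subgroup of a polycyclic group is. Without these two ideas your ``forces type $1-1$'' conclusion is an assertion, not a proof, and no amount of index bookkeeping on $E'$ will substitute for them.
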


\begin{proof}
 Call $\tilde e$ a representative of $e$ in the Bass-Serre tree.
  An abelian group generated by finitely many elliptic elements is elliptic. We just have to show that each element of the edge group $G_{\tilde e}$ is universally elliptic.

  Let ${\tilde v }$ and ${\tilde v' }$ be the endpoints of  ${\tilde e }$. Then $G_{{\tilde v }}\ast_{G_{\tilde e }}G_{{\tilde v' }}$ is contained in the centralizer of $G_{\tilde e }$. If $G_{\tilde e }$ is not of index $\leq 2$ in both ${\tilde v }$ and ${\tilde v' }$, the amalgam contains a free group. So the centralizer of $G_{\tilde e }$ cannot be a polycyclic group. By proposition \ref{centralisateur}, every element of the edge stabilizer $G_{\tilde e }$ is elliptic and $G_{e }$ is universally elliptic.

  There remain two cases: when $e$ is a $2-2$ loop, and when $e$ is a $1-k$ loop with $k>1$. In both cases, let $t\in{G }$ be such that $t\cdot {\tilde v } =  {\tilde v' }$.
   
  If ${e }$ is a $2-2$ loop, let ${\alpha }$ be a square in $G_{\tilde e }$. Then the centralizer of ${\alpha }$ contains $G_{{\tilde v }}$, $tG_{{\tilde v }}t^{-1}$ and $t^{-1}G_{{\tilde v }}t$ and so the group $tG_{{\tilde v }}t^{-1}\ast_{G_{{\tilde e }}}(G_{{\tilde v }}\ast_{t^{-1}G_{{\tilde e } }t} t^{-1} G_{{\tilde v }} t)$. But this amalgamated product is of $2-\infty$ type, the same argument as before works. And so ${\alpha }$ is universally elliptic. Every element of $G_{\tilde e }$ has a universally elliptic square, and $G_{e }$ is universally elliptic.

  If ${e }$ is a $1-k$ loop with $k>1$, then for all $n>0$, the group $G_{\tilde e }$ is a subgroup of $t^nG_{{\tilde v }}t^{-n}$ which is abelian. So its centralizer contains $\bigcup_{n>0} t^nG_{{\tilde v }}t^{-n}$, a strictly increasing union of groups. The centralizer contains a non-finitely generated group and cannot be polycyclic, contradicting proposition \ref{centralisateur}. So $G_{e }$ is universally elliptic.
 \end{proof}

  Graphs of groups consisting of an HNN-extension of type $1-1$ or an amalgamation of type $2-2$ are exactly the reduced ones whose Bass-Serre trees are lines. They are the polycyclic \GGBS~groups. The goal of the next two paragraphs is to study the different decompositions of these groups in order to determine their JSJ.

\section{\texorpdfstring{Edges of type $2-2$}{}} \label{section22}
  In this part, one shows that, for a given n, there are exactly two \GGBS~groups whose graphs of groups have two vertices with vertex group $\Z^n$ linked by an edge of type $2-2$, and that these two groups can be seen as semi-direct products of $\Z^n$ by $\Z$. So these groups have both an action by translation and a dihedral action on $\R$. We shall prove in next section that they both have a trivial JSJ decomposition.

  The first one is the direct product of the Klein bottle group $K=\langle b_0,b_1|\ b_0^2=b_1^2\rangle$ by $E=\Z^{n-1}$. One will call $K$ the \emph{untwisted Klein bottle group}.

  The second one is a twisted version of the first, it can be described as the product of $F=\Z^{n-2}$ and the group $K'$ with presentation $$\langle b_0,b_1| \ b_0^2b_1=b_1b_0^2,\ b_1^2b_0=b_0b_1^2 \rangle.$$ One will call $K'$ the \emph{twisted Klein bottle group}. 

Graph of groups decompositions of $K$ and $K'$ are as in figure \ref{figure2-2}. 

\begin{figure}[!ht]
\begin{center}
 \scalebox{1} 
 {
   \begin{pspicture}(0,-0.34921876)(10.8628125,0.36921874)
     \psline[linewidth=0.04cm,dotsize=0.07055555cm 2.0]{*-*}(1.9809375,-0.32921875)(3.7809374,-0.32921875)
     \usefont{T1}{ptm}{m}{n}
     \rput(1.9423437,0.17578125){$\langle b_0\rangle$}
     \usefont{T1}{ptm}{m}{n}
     \rput(3.7923439,0.17578125){$\langle b_1\rangle$}

     \psline[linewidth=0.04cm,dotsize=0.07055555cm 2.0]{*-*}(6.9809375,-0.32921875)(8.7809374,-0.32921875)
     \usefont{T1}{ptm}{m}{n}
     \rput(6.9423437,0.17578125){$\langle b_0,~b_1^2\rangle$}
     \usefont{T1}{ptm}{m}{n}
     \rput(8.7923439,0.17578125){$\langle b_0^2,~b_1\rangle$}
   \end{pspicture} 
 }
\end{center}
\caption{}
\label{figure2-2}
\end{figure}
  The two sets of groups $K\times E$ and $K'\times F$ will be called \emph{extended (untwisted or twisted) Klein bottle groups}. Like $K$ and $K'$, they have a decomposition in a amalgam of type $2-2$.

\begin{proposition}\label{TFAE}
Let $G$ be a \GGBS~group and $n\in\N$. The following are equivalent:
\begin{enumerate}[label={\roman*})]
 \item \label{un}The group $G$ is an extended Klein bottle group $K\times\Z^{n-1}$ or $K'\times\Z^{n-2}$.
 \item \label{deux}The group $G$ is a semi-direct product $Z^n\rtimes_\varphi Z$ with $\varphi=\left(
       \begin{array}{cc}
         -1 & 0 \\ 
         p  & Id_{n-1}
       \end{array}
      \right)$ in a suitable basis of $\Z^n$.
 \item \label{trois}The group $G$ admits a graph of group decomposition with two vertices carrying groups $\Z^n$ and an edge of type $2-2$.
\end{enumerate}

\end{proposition}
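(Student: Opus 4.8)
The plan is to establish the cycle of implications $\ref{trois}\Rightarrow\ref{un}\Rightarrow\ref{deux}\Rightarrow\ref{trois}$, with the classification step $\ref{trois}\Rightarrow\ref{un}$ carrying the main conceptual weight. For $\ref{trois}\Rightarrow\ref{un}$, suppose $G=A\ast_C B$ with $A\cong B\cong\Z^n$ and $C$ of index $2$ in each, so that $C\cong\Z^n$. The key observation is that $C$ is \emph{central} in $G$: since $A$ and $B$ are abelian, every generator of $G$ commutes with $C$, whence $C\subseteq Z(G)$. Writing $A=\langle C,a\rangle$ and $B=\langle C,b\rangle$ with $a^2=c_0\in C$ and $b^2=c_1\in C$, torsion-freeness of $A$ and $B$ forces $c_0,c_1\notin 2C$. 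Thus $G$ is a central extension $1\to C\to G\to \Z/2\Z\ast\Z/2\Z\to 1$ of the infinite dihedral group, presented by $\langle C,a,b\mid C\cong\Z^n\text{ central},\ a^2=c_0,\ b^2=c_1\rangle$. Its isomorphism type depends only on the pair $(\bar c_0,\bar c_1)$ of nonzero classes in $C/2C\cong(\Z/2\Z)^n$, up to $\mathrm{Aut}(C)=GL_n(\Z)$ (acting through $GL_n(\Z/2\Z)$) and the swap of $A$ and $B$, since replacing $a$ by $ac$ only changes $c_0$ by $2C$. Counting $GL_n(\Z/2\Z)$-orbits of pairs of nonzero vectors gives exactly two cases, $\bar c_0=\bar c_1$ and $\bar c_0\neq\bar c_1$; normalizing bases, the first yields $\langle a,b\mid a^2=b^2\rangle\times\Z^{n-1}=K\times\Z^{n-1}$ and the second $\langle a,b\mid [a^2,b]=[b^2,a]=1\rangle\times\Z^{n-2}=K'\times\Z^{n-2}$.

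For $\ref{un}\Rightarrow\ref{deux}$, I would realize each extended Klein bottle group as the required semi-direct product. Since $K\cong\Z\rtimes_{-1}\Z$, one has $K\times\Z^{n-1}\cong\Z^n\rtimes_\varphi\Z$ with $\varphi=\left(\begin{smallmatrix}-1&0\\0&Id\end{smallmatrix}\right)$, i.e. the stated form with $p=0$. For the twisted group it suffices to check $K'\cong\Z^2\rtimes_\psi\Z$ with $\psi=\left(\begin{smallmatrix}-1&0\\1&1\end{smallmatrix}\right)$: taking $t$ a generator of the $\Z$-factor, $x$ spanning the $(-1)$-direction and $h$ the $(+1)$-direction, the elements $b_0=t$ and $b_1=tx$ satisfy $b_0^2=t^2$ and $b_1^2=t^2h$, together with exactly the relations $[b_0^2,b_1]=[b_1^2,b_0]=1$. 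Multiplying by the central $\Z^{n-2}$ then gives $K'\times\Z^{n-2}\cong\Z^n\rtimes_\varphi\Z$ with the stated $\varphi$ and $p=e_1\neq 0$.

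For $\ref{deux}\Rightarrow\ref{trois}$, note first that $\varphi=\left(\begin{smallmatrix}-1&0\\p&Id\end{smallmatrix}\right)$ squares to the identity, so $\varphi$ is an involution of $\Z^n$ with a one-dimensional $(-1)$-eigenspace. Conjugating $\varphi$ by $\left(\begin{smallmatrix}1&0\\q&Id\end{smallmatrix}\right)$ replaces $p$ by $p-2q$, and conjugating by $\left(\begin{smallmatrix}1&0\\0&A\end{smallmatrix}\right)$ with $A\in GL_{n-1}(\Z)$ replaces $p$ by $Ap$; hence $p$ is well-defined up to the $GL_{n-1}(\Z)$-action on $C/2C$, leaving exactly the two normal forms $p=0$ and $p=e_1$. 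In each normal form I write the amalgam down directly: for $p=0$ the group splits as $K\times\Z^{n-1}$ and one multiplies the $2-2$ decomposition of $K$ (figure \ref{figure2-2}, left) by $\Z^{n-1}$; for $p=e_1$ the subgroups $A=\langle t,h\rangle$, $B=\langle t^2,tx\rangle$ and $C=\langle t^2,h\rangle$ exhibit the $2-2$ amalgam, again after multiplying by the central $\Z^{n-2}$. That these generating sets genuinely form an amalgamated product, rather than merely generate $G$, is checked via the Bass--Serre line on which $\Z^n\rtimes_\varphi\Z$ acts dihedrally.

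The main obstacle is the bookkeeping that guarantees \emph{exactly two} classes and that the two a priori different invariants agree: the orbit of $(\bar c_0,\bar c_1)$ arising in $\ref{trois}\Rightarrow\ref{un}$ and the class of $p$ modulo $2$ arising in $\ref{deux}\Rightarrow\ref{trois}$. Both reduce to the integral representation theory of $\Z/2\Z$ (equivalently, to $GL_{n-1}(\Z/2\Z)$-orbits), where the untwisted class corresponds to the split module $\Z_-\oplus\Z_+^{\,n-1}$ and the twisted class to the regular representation $\Z[\Z/2\Z]\oplus\Z_+^{\,n-2}$. Verifying completeness of the normal form for $\varphi$ and the identification of $K'$ with the regular-representation piece is where the routine but somewhat delicate computation lives.
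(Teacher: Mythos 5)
Your proposal is correct, and it takes a genuinely different route from the paper on the key classification step \ref{trois} $\Rightarrow$ \ref{un}. The paper proves \ref{un} $\Leftrightarrow$ \ref{deux} and \ref{trois} $\Leftrightarrow$ \ref{un}: for \ref{trois} $\Rightarrow$ \ref{un} it writes the amalgam presentation $\langle A,B \mid a_1^2=\varphi(a_1^2),\ a_j=\varphi(a_j)\rangle$ and splits into cases according to whether $\varphi^{-1}(b_1^2)$ is a square in $A$, performing explicit basis changes until one of the two standard presentations appears; \ref{un} $\Rightarrow$ \ref{trois} it gets for free from the defining amalgam decompositions of $K$ and $K'$ (figure \ref{figure2-2}). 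You instead exploit centrality of the edge group $C$ (both vertex groups are abelian and generate $G$), view $G$ as a central extension of $\Z/2\Z\ast\Z/2\Z$ by $C$, and classify by the $GL_n(\Z/2\Z)$-orbit of $(\bar c_0,\bar c_1)$ in $C/2C$. Your dichotomy $\bar c_0=\bar c_1$ versus $\bar c_0\neq\bar c_1$ is in fact the same as the paper's ``square or not'' dichotomy (an element $c\in C$ is a square in $A$ iff $\bar c\in\{0,\bar c_0\}$, since $2A\cap C=2C+\Z c_0$), so the two arguments agree in substance; what yours buys is a transparent reason why exactly two groups occur --- two orbits, equivalently the two $\Z[\Z/2\Z]$-module structures --- while the paper's hands-on route produces the explicit bases and the inventory of elliptic and hyperbolic elements (corollary \ref{2-2}) that it reuses later in propositions \ref{cas1} and \ref{cas2}. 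Because you run a cycle, you also must produce the amalgam directly from the semidirect product in \ref{deux} $\Rightarrow$ \ref{trois}; your verification via the dihedral action pulled back through $G\twoheadrightarrow G/C\cong\Z/2\Z\ast\Z/2\Z$ (legitimate since $C=\langle t^2,h\rangle$ is central, as $\varphi^2=Id$ and $\varphi(h)=h$) is sound Bass--Serre theory, a step the paper's ordering of implications lets it avoid. Two details to spell out in a write-up: transitivity of $GL_n(\Z/2\Z)$ on pairs of \emph{distinct} nonzero vectors holds because such vectors are automatically independent over $\Z/2\Z$; and for $n=1$ only the orbit $\bar c_0=\bar c_1$ exists, so only $K$ occurs, consistent with $K'\times\Z^{n-2}$ requiring $n\geq 2$.
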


We shall prove successively \ref{un} $\Leftrightarrow$ \ref{deux} and \ref{trois} $\Leftrightarrow$ \ref{un}

\begin{lemma}\label{pre2-2}
 The group $K$ can be seen as a semi-direct product $\Z\rtimes_{-Id} \Z$, and $K'$ as $\Z^2\rtimes_\varphi \Z$ with $\varphi=\left(
       \begin{array}{cc}
         -1 & 0 \\ 
         1  & 1
       \end{array}
      \right)$.
\end{lemma}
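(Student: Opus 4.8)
The plan is to prove each isomorphism by exhibiting an explicit pair of mutually inverse homomorphisms between the given presentation and the semi-direct product, so that the whole verification reduces to checking defining relations on a handful of generators. The structural fact driving both identifications is that the acting automorphism has order two: $-Id$ squares to the identity on $\Z$, and a one-line computation gives $\varphi^2 = Id_2$ for $\varphi = \left(\begin{smallmatrix} -1 & 0 \\ 1 & 1\end{smallmatrix}\right)$. On the group side this matches the fact that $b_0^2$ and $b_1^2$ are central in both $K$ and $K'$ (immediate from the defining relations), together with the general fact that the square $t^2$ of the stable letter is central in any $\Z^m\rtimes_\psi\Z$ with $\psi^2=Id$.

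For the untwisted group $K=\langle b_0,b_1\mid b_0^2=b_1^2\rangle$, I would set $a=b_0b_1^{-1}$ and $t=b_0$. Using $b_0^2=b_1^2$ one computes $tat^{-1}=b_0^2b_1^{-1}b_0^{-1}=b_1b_0^{-1}=a^{-1}$, so conjugation by $t$ inverts $a$; since $b_1=a^{-1}t$, the pair $a,t$ generates $K$. This produces a homomorphism $K\to \Z\rtimes_{-Id}\Z=\langle a,t\mid tat^{-1}=a^{-1}\rangle$, and conversely $t\mapsto b_0$, $a\mapsto b_0b_1^{-1}$ defines a homomorphism back, its single relation being exactly the computation just made. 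The two maps are visibly inverse on generators, giving $K\cong \Z\rtimes_{-Id}\Z$.

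For the twisted group, write $\Z^2\rtimes_\varphi\Z=\langle x,y,t\mid [x,y]=1,\ txt^{-1}=x^{-1}y,\ tyt^{-1}=y\rangle$. I would map $b_0\mapsto t$ and $b_1\mapsto xt$. Then $b_0^2\mapsto t^2$, which is central because $\varphi^2=Id_2$, while $(xt)^2=yt^2$ lies in the centralizer $\langle y,t\rangle$ of $t$ (as $\varphi$ fixes $y$); these are precisely what the relations $[b_0^2,b_1]=1$ and $[b_1^2,b_0]=1$ demand, so the map is well defined. For the inverse I would send $t\mapsto b_0$, $x\mapsto b_1b_0^{-1}$, $y\mapsto b_1^2b_0^{-2}$; here $y$ lands in the central subgroup generated by $b_0^2,b_1^2$, so $[x,y]=1$ is automatic, and the remaining two relations reduce to short manipulations using the centrality of $b_0^2$ and $b_1^2$. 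Evaluating the two composites on generators shows the maps are inverse, whence $K'\cong \Z^2\rtimes_\varphi\Z$.

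The only point requiring genuine care is confirming that the homomorphisms built from each presentation are truly inverse to one another rather than merely surjective, i.e. that the listed relations suffice and no further collapsing occurs; this is handled by the von Dyck universal property, checking each map on its finitely many generators. Everything becomes routine once the correct substitutions are identified ($a=b_0b_1^{-1}$ in the untwisted case, and $x=b_1b_0^{-1}$, $y=b_1^2b_0^{-2}$ in the twisted case), so locating these generators is the substantive step and the remaining relation-checking is purely mechanical.
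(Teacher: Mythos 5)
Your proof is correct and takes essentially the same route as the paper: both arguments identify $K$ and $K'$ with the semi-direct products via the explicit substitutions $t=b_0$, $a=b_0b_1^{-1}$ (resp.\ $x=b_1b_0^{-1}$, $y=b_1^2b_0^{-2}$, which are just the inverses of the paper's chosen generators $b_0b_1^{-1}$, $b_0^2b_1^{-2}$) and verify the defining relations on generators. The only difference is that you spell out the von Dyck verification that the two maps are mutually inverse, which the paper's change-of-presentation argument leaves implicit.
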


\begin{proof}
The proof consists in giving for $K$ and $K'$ a change of presentation, such that the second one is a semi-direct product. 

  In the case of $K$, the change of presentation is well known:
  $$\begin{array} {crcl}
       \psi: 	&\langle b_0,b_1|b_0^2=b_1^2\rangle  	&\stackrel{\sim}{\longrightarrow} 	&\langle t,a_0|tat^{-1}=a^{-1}\rangle  \\
  	        &b_0 					&\longmapsto 				&t\\
 		&b_0b_1^{-1}				&\longmapsto 				&a
     \end{array}.$$

In the second case, the presentation  $$\mathcal P = \langle b_0,b_1|b_0^2b_1=b_1b_0^2,\ b_1^2b_0=b_0b_1^2\rangle$$ of $K'$ can be changed to   $$\mathcal P' = \mathcal \langle t,a_1,a_2|ta_2 t^{-1}=a_2, ta_1 t^{-1} =a_1^{-1}a_2,[a_1,a_2]=1\rangle $$ via the map $$\begin{array} {crcl}
 	\psi:&\mathcal P  	&\stackrel{\sim}{\longrightarrow} 	&\mathcal P'\\
 	      &b_0		&\longmapsto				&t\\
 	      &b_0b_1^{-1} 	&\longmapsto				&a_1\\
 	      &b_0^2b_1^{-2}	&\longmapsto				&a_2
 	      \end{array}.$$
\end{proof}

\begin{corollary}\label{2-2}
  Each group $K\times E$ and $K'\times F$ can be identified with a semi-direct product $\mathbb Z^n \rtimes_\varphi \mathbb Z$ where $\varphi\in\text{Aut}(Z^n)$ has matrix $
      \left(
       \begin{array}{cc}
         -1 & 0 \\ 
         p  & Id_{n-1}
       \end{array}
      \right)$ in a well-chosen basis $(x,h_1,\dots h_{n-1})$ .

Moreover, with this identification: 
\begin{itemize}
            \item $x=b_0b_1^{-1}$,
	    \item for all $h$ in $\langle h_1,\dots h_{n-1}\rangle$, the element $xh$ is hyperbolic in the decomposition in amalgam of type $2-2$,
	    \item $\langle h_1,\dots h_{n-1}\rangle = E$ in the case of $K\times E$,
	    \item $\langle h_1,\dots h_{n-1}\rangle=\langle F,b_0^2b_1^{-2}\rangle$ in the case of $K'\times F$.
           \end{itemize}

  In particular, in the semi-direct product decomposition, seen as an HNN extension, the groups $E$, $F$ and the element $b_0^2b_1^{-2}$ stay elliptic, the element $b_0b_1^{-1}$ is elliptic too. On the opposite $b_0$ and $b_1$ are hyperbolic.
\end{corollary}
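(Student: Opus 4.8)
The plan is to bootstrap the two semi-direct product structures of $K$ and $K'$ obtained in Lemma \ref{pre2-2} by absorbing the free abelian direct factor into the base group. First I would record that in the decomposition $K=\langle a\rangle\rtimes_{-Id}\langle t\rangle$ the stable letter is $t=b_0$ and the base is generated by $a=b_0b_1^{-1}$, while in $K'=\langle a_1,a_2\rangle\rtimes_\varphi\langle t\rangle$ one has $t=b_0$, $a_1=b_0b_1^{-1}$, $a_2=b_0^2b_1^{-2}$, with $\varphi(a_1)=a_1^{-1}a_2$ and $\varphi(a_2)=a_2$. Since $E$ (resp.\ $F$) is a direct factor, its elements commute with $t=b_0$; hence $\langle a\rangle\times E$ (resp.\ $\langle a_1,a_2\rangle\times F$) is a normal free abelian subgroup of rank $n$ whose quotient $\langle t\rangle\cong\Z$ is also a complement, so that $K\times E$ and $K'\times F$ are indeed semi-direct products $\Z^n\rtimes_\varphi\Z$ with $t$ acting trivially on the direct factor.

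Next I would read off the matrix of $\varphi$ in the announced basis. For $K\times E$, take $x=b_0b_1^{-1}$ together with a basis $h_1,\dots,h_{n-1}$ of $E$; then $\varphi(x)=-x$ and $\varphi(h_i)=h_i$, giving $p=0$. For $K'\times F$, take $x=b_0b_1^{-1}=a_1$, $h_1=b_0^2b_1^{-2}=a_2$, and a basis $h_2,\dots,h_{n-1}$ of $F$; then $\varphi(x)=-x+h_1$, $\varphi(h_1)=h_1$ and $\varphi(h_i)=h_i$, which is the matrix $\left(\begin{smallmatrix}-1 & 0\\ p & Id_{n-1}\end{smallmatrix}\right)$ with $p=(1,0,\dots,0)$. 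This simultaneously establishes the first, third and fourth bullets, since by construction $\langle h_1,\dots,h_{n-1}\rangle$ equals $E$ in the first case and $\langle F,b_0^2b_1^{-2}\rangle$ in the second, and $x=b_0b_1^{-1}$ in both.

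It then remains to check the statements about ellipticity and hyperbolicity. The \emph{in particular} clause is immediate once the semi-direct product is viewed as an HNN extension over the base $\Z^n=\langle x,h_1,\dots,h_{n-1}\rangle$: this base is a vertex group, so $x$, $b_0^2b_1^{-2}$ and the factors $E,F$ are elliptic, whereas $b_0=t$ is the stable letter and $b_1=x^{-1}t$ is a base element times the stable letter, both hyperbolic in any HNN extension. The one point requiring an actual computation is that $xh$ is hyperbolic in the amalgam of type $2-2$ of figure \ref{figure2-2}. For this I would pass to the Bass--Serre normal form for the amalgam $A\ast_C B$, where for $K\times E$ one has $A=\langle b_0\rangle\times E$, $B=\langle b_1\rangle\times E$ and $C=\langle b_0^2\rangle\times E=\langle b_1^2\rangle\times E$, and analogously $A=\langle b_0,b_1^2\rangle\times F$, $B=\langle b_0^2,b_1\rangle\times F$, $C=\langle b_0^2,b_1^2\rangle\times F$ for $K'\times F$. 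Since every $h$ in $\langle h_1,\dots,h_{n-1}\rangle$ lies in the edge group $C$, while $b_0\in A\setminus C$ and $b_1^{-1}\in B\setminus C$, the element $xh=b_0\,(b_1^{-1}h)$ is a cyclically reduced word of syllable length two, hence hyperbolic.

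The main obstacle is precisely this last normal-form verification: one must check that multiplying $b_1^{-1}$ by the edge-group element $h$ does not push it into $C$ — it does not, because $C$ is a subgroup and $b_1^{-1}\notin C$ — so that the syllable length is genuinely two and $xh$ is not conjugate into a vertex group. Everything else is bookkeeping with the change of presentation of Lemma \ref{pre2-2} and with the commutation of the direct factor with the stable letter $b_0$.
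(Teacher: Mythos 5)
Your proof is correct and follows essentially the same route as the paper: both absorb the abelian direct factor into the base of the semi-direct products from Lemma \ref{pre2-2}, using $(\Z^p\rtimes_\varphi\Z)\times\Z^q\simeq(\Z^p\times\Z^q)\rtimes_{(\varphi\times Id_q)}\Z$, and then verify the four bullet points. The paper dismisses those verifications as ``easy to check,'' whereas you carry them out explicitly (notably the syllable-length argument showing $xh$ is hyperbolic in the $2-2$ amalgam and the $t$-exponent argument for $b_0$, $b_1$), which fills in detail but is not a different approach.
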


One can notice that, in the case of untwisted Klein bottle groups, the element $b_0^2b_1^{-2}$ is trivial.

\begin{proof}
 The changes of presentation described in the previous proof can be extended to $K\times E$ or $K'\times F$ by noticing that $(\Z^p\rtimes_\varphi \Z)\times \Z^q \simeq (\Z^p\times \Z^q) \rtimes_{(\varphi\times Id_q)} \Z$. The four points are easy to check.
\end{proof}

This proves \ref{un} $\Rightarrow$ \ref{deux} in proposition \ref{TFAE}.\\
 The automorphism $\varphi=
      \left(
       \begin{array}{cc}
         -1 & 0 \\ 
         p  & Id_{n-1}
       \end{array}
      \right)$ is conjugate to $
      \left(
       \begin{array}{cc}
         -1 & 0 \\ 
         0  & Id_{n-1}
       \end{array}
      \right)$ if all coordinates of $p$ are even, to $\left(
      \begin{array}{rcc}
         -1 & 0 &0 \\ 
         1  & 1 &0\\
         0  & 0 & Id_{n-2}
      \end{array}
      \right)$ if one is odd. So the semi-direct product $\Z^n\rtimes_\varphi\Z$ is isomorphic to one of $K\times E$ or $K'\times F$.
This implies the \ref{deux} $\Rightarrow$ \ref{un} part.

\ref{un} $\Rightarrow$ \ref{trois} has already been done.

Let us prove \ref{trois} $\Rightarrow$ \ref{un}.

  Let $A$ and $B$ be two copies of $\Z^n$ generated by the sets $\{a_1, \dots, a_n\}$ and $\{b_1,\dots, b_n\}$ respectively. By assumption, ${G }$ has a presentation $$\langle A, B |\ a_1^2=\varphi(a_1^2),\ a_j=\varphi(a_j), 2 \leq j \leq n\rangle $$ where $\varphi$ is an isomorphism between $\langle a_1^2,a_2,\dots, a_n\rangle $ and $\langle b_1^2,b_2,\dots, b_n\rangle $.

  If $\varphi^{-1}(b_1^2)$ is a square in $A$, let $c$ be its square root. Let us take the family
  $\left\{ c,~\varphi^{-1}(b_i), i>1\right\}$ as a basis of $A$. The presentation of ${G }$ is then changed to $$\langle b_1,b_2,\dots, b_n,c \ |\ b_ib_j=b_jb_i,\ c^2=b_1^2,\ cb_j=b_jc\ for\ j\neq 1,\rangle$$ which is a presentation of an extended Klein bottle group.

  If $\varphi^{-1}(b_1^2)$ is not a square in $A$, then $\varphi^{-1}(b_1^2)$ may be written $\prod_{i=1}^na_i^{\nu_i}$, and there exists $r>1$ for which $\nu_r$ is odd. We may assume $r=2$. The element $\tilde a_2 = a_2^{1-\nu_2}\varphi^{-1}(b_1^2)$ is such that its image by $\varphi$ is a square $\tilde b_2^2$ in $B$ with $\tilde b_2$ out of the image of $\varphi$. As $a_2$ can be written $a_2=\tilde{a_2}\cdot \prod_{i=1, i\neq 2}^na_i^{-\nu_i}$, the family $\left\{ \tilde a_2,~a_i,~i\neq 2\right\}$ is a basis of $A$. Taking as generators for $B$ the elements $c_2=\tilde b_2$, $c_1=\varphi(a_1^2) $ and $c_i=\varphi(a_i)$ for $i>2$, the group ${G }$ admits as presentation $$\langle a_i,c_i|a_ia_j=a_ja_i, c_ic_j=c_jc_i, a^2_1=c_1, a_2=c_2^2, a_i=c_i, i>2\rangle.$$ And ${G }$ is isomorphic to $ K'\times\Z^{n-2}$.

\section{\texorpdfstring{Loops of type $1-1$}{}}\label{section11}

The groups we consider here are semi-direct products of $\Z^n$ by $\Z$. The goal of the section is to determine their JSJ decompositions.

For $\varphi$ an element of $Gl_n(\Z)$ we write $G_\varphi$ the group $\Z^n\rtimes_\varphi \Z$. We will call \emph{hyperplane} of $\Z^n$ the kernel of a linear map  $f:\Z^n\twoheadrightarrow \Z$.

 \begin{proposition}\label{simples}
If $\varphi$ cannot be written (up to conjugation) in one of the following ways:
   \begin{enumerate}
    \item $\left(
          \begin{array}{cc}
            1 & 0 \\ 
            p & M
          \end{array}
          \right)$
          with $M$ an $(n-1)\times(n-1)$ matrix of finite order and $p$ in $\Z^{n-1}$,
    \item $\left(
          \begin{array}{cc}
           -1 & 0 \\ 
           p  & Id_{n-1}
          \end{array}
          \right)$ with $p$ in $\Z^{n-1}$.
   \end{enumerate} then the group $G_\varphi=\Z^n\rtimes_\varphi \Z$ has a unique non-trivial (reduced) graph of groups decomposition.

Its JSJ decomposition is the HNN-extension $\Z^n\ast_\varphi$. 
 \end{proposition}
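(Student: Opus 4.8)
The plan is to determine all reduced graph-of-groups decompositions of $G_\varphi=\Z^n\rtimes_\varphi\Z$ and to show that, under the stated hypothesis on $\varphi$, only the standard one survives. Since $G_\varphi$ is polycyclic, the discussion recalled at the end of Section~\ref{sectionuniverselle} tells us that every non-trivial reduced decomposition has a line as Bass-Serre tree, hence is either an HNN-extension of type $1-1$ or an amalgam of type $2-2$. So I must establish three things: (a) the only decomposition of type $1-1$ is $\Z^n\ast_\varphi$; (b) there is no decomposition of type $2-2$; and (c) $\Z^n\ast_\varphi$ is the JSJ, i.e. its edge group $\Z^n$ is universally elliptic.

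For (a): in a $1-1$ HNN-extension both edge inclusions are onto the vertex group $V$, so $tVt^{-1}=V$; thus $V$ is a normal subgroup with $V\cong\Z^n$ and $G_\varphi/V\cong\Z$, that is $V=\ker\sigma$ for a surjection $\sigma\colon G_\varphi\to\Z$. As $G_\varphi$ is torsion-free and polycyclic, $\ker\sigma$ has Hirsch length $n$ and is $\cong\Z^n$ exactly when it is abelian, so I would classify the surjections with abelian kernel. Computing $G_\varphi^{\mathrm{ab}}$ gives $\mathrm{Hom}(G_\varphi,\Z)=\Z\rho\oplus W$, where $\rho$ is the projection to the $\Z$-factor and $W$ is the group of $\varphi$-invariant functionals $f\colon\Z^n\to\Z$ (those with $f\circ\varphi=f$); write $\sigma=k\rho+f$. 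For $\sigma=\pm\rho$ (i.e. $f=0$) the kernel is the base $\Z^n$. For $f\neq0$ its kernel $L=\ker f$ is a $\varphi$-invariant hyperplane on whose quotient $\Z^n/L\cong\Z$ the map $\varphi$ acts trivially, and $\ker\sigma$ is generated by $L$ together with one element whose conjugation action on $L$ is $\varphi^{d}|_{L}$, where $d\ge1$ is the content of $f$. Hence $\ker\sigma$ is abelian iff $\varphi^{d}|_{L}=\mathrm{id}$, i.e. iff $\varphi|_{L}$ has finite order; choosing a basis adapted to a splitting $\Z^n=\langle x\rangle\oplus L$ then puts $\varphi$ into the shape $\bigl(\begin{smallmatrix}1&0\\p&M\end{smallmatrix}\bigr)$ with $M=\varphi|_{L}$ of finite order, which is case (1). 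Contrapositively, if $\varphi$ is not of form (1), the only surjections with abelian kernel are $\pm\rho$, so $\Z^n\ast_\varphi$ is the unique decomposition of type $1-1$.

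For (b) I argue by contradiction using Proposition~\ref{TFAE}: a $2-2$ amalgam has two vertex groups $\cong\Z^n$ (the edge group has index $2$ in each, hence is also $\cong\Z^n$), so by that proposition $G_\varphi\cong\Z^n\rtimes_{\varphi'}\Z$ with $\varphi'$ of form (2). The base of this product is a normal $\Z^n$ with quotient $\Z$, so by the uniqueness obtained in (a) the isomorphism must carry it to the base of $G_\varphi$; comparing the two semi-direct structures gives $\varphi$ conjugate to $(\varphi')^{\pm1}$, and since the matrices of form (2) are involutions this makes $\varphi$ of form (2), contrary to hypothesis. For (c): if the base $N=\Z^n$ were hyperbolic in some minimal tree $T$, then, $N$ being abelian, its hyperbolic elements would share a common axis $\ell$ and its elliptic elements would fix $\ell$ pointwise; since $N\trianglelefteq G_\varphi$, the whole group would preserve $\ell$, forcing the minimal subtree $T=\ell$ to be a line—hence, by (a) and (b), the standard HNN, in which $N$ is the (elliptic) vertex group: a contradiction. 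So $N$ is universally elliptic, $\Z^n\ast_\varphi$ is universally elliptic and dominates the trivial decomposition, and is therefore the JSJ.

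The main obstacle is the computation in (a): extracting from the condition ``$\ker\sigma$ abelian'' the precise normal form (1)—in particular checking that an invariant functional yields a $\varphi$-invariant hyperplane with trivial action on the quotient, and that abelianness of the kernel is exactly finiteness of the order of $\varphi|_{L}$—together with the slightly delicate transfer in (b) from an abstract isomorphism of groups back to a conjugacy statement about $\varphi$, which is what makes the hypothesis on $\varphi$ (rather than on $G_\varphi$) usable.
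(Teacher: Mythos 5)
Your step (a) is correct, and its $\mathrm{Hom}(G_\varphi,\Z)$ computation is a legitimate (and more explicit) alternative to the paper's argument; but step (b) has a genuine gap, and step (c) inherits it because it quotes (b). In (b) you assert that a $2-2$ amalgam decomposition of $G_\varphi$ has its two vertex groups isomorphic to $\Z^n$; your parenthetical only justifies that the \emph{edge} group is $\cong\Z^n$. The paper's standing convention is that edge groups are free abelian, but vertex groups of a decomposition are arbitrary subgroups of $G_\varphi$, and a torsion-free group containing $\Z^n$ with index $2$ need not be abelian: it can be a Klein-bottle-type group. Proposition \ref{TFAE} requires abelian vertex groups in its item (iii), so it cannot be invoked at this point, and your reduction of the $2-2$ case to form (2) collapses.

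Moreover this is not a fixable oversight, because the configuration you excluded actually occurs. Take $n=2$ and $\varphi=-Id_2$; this matrix is conjugate to no matrix of form (1) (those have $1$ as an eigenvalue) and to none of form (2) (those have determinant $-1$), so it satisfies your hypothesis. Write $G_\varphi=\langle u,v\rangle\rtimes\langle x\rangle$ with $xux^{-1}=u^{-1}$, $xvx^{-1}=v^{-1}$. Mapping $x\mapsto r$, $u\mapsto rr'$, $v\mapsto 1$ defines a surjection $\pi\colon G_\varphi\to D_\infty=\langle r,r'\mid r^2=r'^2=1\rangle$ with kernel $E=\langle v,x^2\rangle\cong\Z^2$; pulling back the splitting $D_\infty=\Z/2\ast\Z/2$ exhibits $G_\varphi=A\ast_E B$ with $A=\langle x,v\rangle$ and $B=\langle xu,v\rangle$ two Klein bottle groups containing $E$ with index $2$ (note $(xu)^2=x^2$). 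This is a second non-trivial reduced decomposition of $G_\varphi$, over a free abelian edge group, of type $2-2$ with non-abelian vertex groups; and in it the base element $u$ is hyperbolic, since its image $rr'$ is a translation. (Geometrically: the half-turn flat $3$-manifold is simultaneously a torus bundle with monodromy $-Id$ and the double of the twisted $I$-bundle over the Klein bottle.) So under your hypothesis the uniqueness claim fails and the edge group of the HNN extension is not universally elliptic, i.e.\ the conclusion of your step (c) fails as well. This of course conflicts with the proposition as stated; the paper's own proof has the same blind spot, namely the step of Lemma \ref{1-1} claiming that an elliptic stable letter $t$ must commute with $H$ --- which tacitly assumes abelian vertex stabilizers and fails in the example above, where $t=x$ is an elliptic reflection, $H=\langle v\rangle$, and $xvx^{-1}=v^{-1}$. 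In short, the place where you left a gap is exactly the hard point of the statement: dihedral actions whose vertex stabilizers are not abelian, which neither your argument nor the paper's lemma actually rules out.
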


 \begin{lemma}\label{1-1}
Let ${x }$ be an element of the $\Z^n$ part of $G_\varphi$. Assume that there exists a graph of groups decomposition $\Lambda$ of $G_{\varphi }$ for which ${x }$ is hyperbolic. Then there exists a hyperplane ${H }\subset \Z^n$ elliptic in the decomposition $\Lambda$, stable under the action of ${\varphi }$, and such that ${\varphi }_{|{H }}$ has finite order. Moreover, the set of elliptic elements of  ${\Z^n}$ in $\Lambda$ is exactly ${H }$, and if ${\varphi }({x }){x }^{-1}$ does not belong to ${H }$ then ${\varphi }_{|{H }} = Id$. In particular $\varphi$ can be written in one of the forms described in proposition \ref{simples}.
 \end{lemma}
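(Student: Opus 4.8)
The plan is to read off the hyperplane $H$ from the axis of $x$, show it is $\varphi$-invariant and elliptic by soft arguments, and then extract the finiteness of $\varphi_{|H}$ from the fact that the whole group is forced to act on a single line whose pointwise stabiliser sits inside a free abelian edge group.

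First I would fix an axis $A$ of the hyperbolic element $x$ in the Bass--Serre tree of $\Lambda$. Every element of $\Z^n$ commutes with $x$, hence preserves $A$ setwise; recording the signed translation length along $A$ gives a homomorphism $\tau\colon\Z^n\to\Z$ with $\tau(x)\neq0$. An element of $\Z^n$ is elliptic exactly when it acts trivially on $A$ (an elliptic isometry preserving a line, after subdividing to avoid inversions, fixes it pointwise), so the set of elliptic elements of $\Z^n$ equals $H:=\ker\tau$, a hyperplane. Since conjugation by $t$ preserves ellipticity, $\varphi(H)=H$; as $\varphi\in GL_n(\Z)$ preserves $H=\ker\tau$ we get $\tau\circ\varphi=\epsilon\tau$ with $\epsilon=\pm1$, which is precisely the block form $\left(\begin{smallmatrix}\epsilon&0\\p&\psi\end{smallmatrix}\right)$ in a basis $(x,h_1,\dots,h_{n-1})$ with $H=\langle h_1,\dots,h_{n-1}\rangle$ and $\psi=\varphi_{|H}$. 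Note that $\varphi(x)x^{-1}\in H$ if and only if $\epsilon=1$.

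The geometric heart is to show that $t$ preserves $A$: indeed $t\cdot A$ is the axis of $txt^{-1}=\varphi(x)$, and $\varphi(x)\in\Z^n$ translates $A$ by $\epsilon\tau(x)\neq0$, so its axis is again $A$. Hence $G_\varphi=\langle\Z^n,t\rangle$ stabilises the line $A$, yielding $\rho\colon G_\varphi\to\mathrm{Isom}(A)$ with image an infinite subgroup of the infinite dihedral group (since $\rho(x)$ is a nontrivial translation). Its kernel $P_0$ fixes $A$ pointwise, so it fixes an edge of $A$ and therefore lies in a finitely generated free abelian edge group; in particular $P_0$ is abelian, with $P_0\cap\Z^n=H$. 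Comparing Hirsch lengths in $1\to P_0\to G_\varphi\to\rho(G_\varphi)\to1$ gives $h(P_0)=n$, while $P_0/H\hookrightarrow G_\varphi/\Z^n\cong\Z$; thus $P_0\supsetneq H$ and there is $s=vt^{k}\in P_0$ with $k\neq0$. As $P_0$ is abelian, $s$ commutes with $H$, and $shs^{-1}=\varphi^{k}(h)$ for $h\in H$, so $\varphi^{k}_{|H}=\mathrm{Id}$, proving $\varphi_{|H}$ has finite order. This already gives form (1) when $\epsilon=1$, and the constraint on $P_0$ is exactly the polycyclicity of centralisers furnished by Proposition \ref{centralisateur}.

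The delicate point, which I expect to be the main obstacle, is the case $\epsilon=-1$, where one must upgrade ``finite order'' to $\psi=\mathrm{Id}$; this is the clause ``$\varphi(x)x^{-1}\notin H\Rightarrow\varphi_{|H}=\mathrm{Id}$'', and it is what distinguishes form (2) from spurious matrices such as $\varphi=-\mathrm{Id}$. When $\epsilon=-1$, $\rho(t)$ reverses the orientation of $A$, hence is a reflection, so $t$ fixes a vertex $a_0\in A$; since $H$ also fixes $a_0$, the subgroup $\langle H,t\rangle$ lies in the vertex group $G_{a_0}$. Provided $G_{a_0}$ is free abelian, $t$ commutes with $H$ and $\psi=\mathrm{Id}$, giving form (2). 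I would emphasise that this final step genuinely uses that the vertex groups of $\Lambda$ are abelian: with only the edge groups free abelian one cannot prevent $\langle H,t\rangle$ from being a Klein bottle group, in which case $\psi$ need only have finite (even) order. Once $\psi$ is pinned down in both cases, reassembling the block form places $\varphi$ in one of the two forms of Proposition \ref{simples}.
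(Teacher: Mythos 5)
Your construction of $H$, its $\varphi$-invariance, and the finite order of $\varphi_{|H}$ follows essentially the paper's route (the tree is a line since $G_\varphi$ is polycyclic; the elliptic elements of $\Z^n$ form the kernel of the translation homomorphism; an element $vt^k$ of the pointwise stabilizer of the line commutes with $H$ because that stabilizer lies in a free abelian edge group), and your Hirsch-length argument producing the element $s=vt^k$ is in fact more careful than the paper's, which merely asserts that some $x^ht^k$ is elliptic and commutes with $H$. One small repair: your parenthetical claim that an elliptic isometry preserving a line fixes it pointwise ``after subdividing to avoid inversions'' is not right (a reflection about a vertex is not an inversion); the correct reason elliptic elements of $\Z^n$ fix $A$ pointwise is that they commute with the translation $x$, hence cannot reverse the orientation of $A$.

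The obstacle you flag in the case $\epsilon=-1$ is not a defect of your write-up: it is a genuine gap, and it is exactly where the paper's own proof goes wrong. The paper argues that if the stable letter $t$ is elliptic in $\Lambda$ then it commutes with $H$, hence $\varphi_{|H}=Id$; but an elliptic $t$ acting as a reflection of the line merely shares a \emph{vertex} group with $H$, and vertex groups of $\Lambda$ are not assumed abelian, so no commutation follows. Your Klein-bottle scenario is actually realized: take $n=2$, $\varphi=-Id_2$, so $G_\varphi=\langle a,b,t \mid [a,b]=1,\ tat^{-1}=a^{-1},\ tbt^{-1}=b^{-1}\rangle$. The subgroup $N=\langle b,t^2\rangle\cong\Z^2$ is normal with $G_\varphi/N\cong D_\infty$, and pulling back the splitting of $D_\infty$ as $\Z/2\ast\Z/2$ gives $G_\varphi\cong\langle b,t\rangle \ast_{N}\langle b,at\rangle$, an amalgam of two Klein bottle groups over the free abelian group $N$ (classically: $\Z^2\rtimes_{-Id}\Z$ is the flat $3$-manifold group obtained by doubling the twisted $I$-bundle over the Klein bottle). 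In this decomposition $x=a$ is hyperbolic, the set of elliptic elements of $\Z^2$ is $H=\langle b\rangle$, and $\varphi(x)x^{-1}=a^{-2}\notin H$, yet $\varphi_{|H}=-Id\neq Id$; moreover $-Id_2$ is conjugate to neither of the two forms of Proposition \ref{simples}. So the final clause of the lemma, and with it the uniqueness and JSJ claims of Proposition \ref{simples}, are false as stated. The correct conclusion in the orientation-reversing case is only that $\varphi(x)x\in H$ and $\varphi_{|H}^2=Id$ (since $t$ acts as a reflection, $t^2$ lies in the abelian pointwise stabilizer of the line), i.e.\ form (2) must allow $M$ of order $2$ rather than $M=Id$. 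Your instinct that abelianness of the vertex groups is needed, and is unavailable, is precisely the point.
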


 \begin{proof}
  The group $G_{{\varphi }}$ is polycyclic, so  the Bass-Serre tree $T$ associated to ${\Lambda }$ is a line. The set of elliptic elements of ${\Z^n}$ in ${\Lambda }$ is the kernel of a non trivial homomorphism from $\Z^n$ onto $\Z$. It is therefore a hyperplane ${H }$ in ${\Z^n}$ not containing ${x }$. Elements of $H$ act as the identity on $T$. Moreover ${\varphi }({H }) = t{H } t^{-1}$ is also an elliptic subgroup of ${\Z^n}$ in ${\Lambda }$. We obtain the inclusion ${\varphi }({H }) \subset {H }$, and so ${\varphi }$ stabilizes ${H }$.

  If ${\varphi }_{|{H }}=Id$, then the lemma holds. Let us assume ${\varphi }_{|{H }}\neq Id$ and let us show that ${\varphi }_{|{H }}$ has finite order and that ${\varphi }(x)x^{-1}$ belongs to ${H }$.

Write $G_{\varphi } = {\Z^n} \rtimes_{\varphi } \langle t\rangle$.  If the stable letter $t$ is elliptic in $\Lambda$, then it commutes with ${H }$ and so ${\varphi }_{|{H }}=Id$. The letter $t$ must therefore be hyperbolic. There exist two non-zero integers $h$ and $k$ such that ${x }^ht^k$ is elliptic. Then ${x }^ht^k$ must commute with ${H }$, but   ${x }$ also commutes with ${H }$, so $t^k$ commutes with ${H }$. One has ${\varphi }^k_{|{H }} = Id_{H }$.

  There exists an element ${g } \in {H }$ for which $t{x } t^{-1}={\varphi }({x }) = {x }^{p}{g }$ with $p = \pm 1$. But $t$ and $x$ act by translation implying that $p=+1$ and that ${\varphi }(x)x^{-1}$ belongs to ${H }$.
 \end{proof}

\begin{proof}[Proof of proposition \ref{simples}]
  By lemma \ref{1-1}, when $\varphi$ is not as in 1. and 2. the vertex group of $G_\varphi$ (seen as an HNN extension) is universally elliptic. By \cite[lemma 4.6]{GL3a}, the JSJ decomposition of $G_\varphi$ is the HNN extension $\Z^n\ast_\varphi$. 
\end{proof}

Every other group $G_\varphi$ has at least one other action:

 \begin{lemma}\label{flexible}
    Let  $({x },h_1,\dots,h_{n-1})$ be a basis of $\Z^n$ in which $\varphi$ can be written as in 1. or 2. of proposition \ref{simples}.
    Then the semi-direct product $G_{\varphi } = \mathbb Z^n \rtimes_{\varphi } \langle t\rangle $ can be decomposed as a graph of groups in which ${H }=\langle h_1,\dots, h_{n-1}\rangle$ is elliptic and ${x } h$ is hyperbolic for all $h$ in $H$.
 \end{lemma}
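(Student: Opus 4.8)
The plan is to build the splitting by passing to the quotient of $G_\varphi$ by $H=\langle h_1,\dots,h_{n-1}\rangle$ and pulling a decomposition of that quotient back to $G_\varphi$. First note that in both forms of Proposition \ref{simples} the restriction $\varphi_{|H}$ is given by an invertible integer matrix ($M$ of finite order in case 1., $Id_{n-1}$ in case 2.), so $\varphi(H)=H$; since $H$ is also normal in the abelian group $\Z^n$, it is normal in $G_\varphi=\Z^n\rtimes_\varphi\langle t\rangle$. Write $\pi\colon G_\varphi\twoheadrightarrow \bar G:=G_\varphi/H$ and $\bar x=\pi(x)$. The key principle I would use is that any graph of groups decomposition of $\bar G$, viewed as a cocompact action of $\bar G$ on a tree $T$, pulls back through $\pi$ to a cocompact action of $G_\varphi$ on the same tree, with the same quotient graph and with vertex and edge groups the $\pi$-preimages of those of $\bar G$. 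Since the $G_\varphi$-action factors through $\pi$, an element $g$ is hyperbolic if and only if $\pi(g)$ is hyperbolic, and $H=\ker\pi$ fixes $T$ pointwise, hence is elliptic. So it suffices to find a decomposition of $\bar G$ in which $\bar x$ is hyperbolic: then for every $h\in H$ we have $\pi(xh)=\bar x$ hyperbolic, so $xh$ is hyperbolic, while $H$ is automatically elliptic, which is exactly the claim.

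It then remains to identify $\bar G$ and exhibit such a decomposition. Reading off the first column of $\varphi$ gives $\varphi(x)=x\,h^p$ in case 1. and $\varphi(x)=x^{-1}h^p$ in case 2., with $h^p\in H$; since $\Z^n/H\cong\langle\bar x\rangle\cong\Z$, this means $t\bar x t^{-1}=\bar x$ in case 1. and $t\bar x t^{-1}=\bar x^{-1}$ in case 2. Thus $\bar G\cong \Z^2=\langle\bar x\rangle\times\langle t\rangle$ in case 1., while $\bar G$ is the Klein bottle group $K=\langle\bar x,t\mid t\bar x t^{-1}=\bar x^{-1}\rangle$ in case 2. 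In case 1. I take the loop (HNN) decomposition of $\Z^2$ with vertex group $\langle t\rangle$ and stable letter $\bar x$; there $\bar x$ is hyperbolic and $\langle t\rangle$ elliptic, and pulling back through $\pi$ settles this case.

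For case 2. the element $\bar x$ is hyperbolic in the rank-one amalgam $\langle b_0\rangle\ast_{\langle b_0^2\rangle}\langle b_1\rangle$ presenting $K$ (with $b_0=t$, $b_1=\bar x^{-1}t$, so that $\bar x=b_0b_1^{-1}$): each $b_i$ acts as a reflection of the Bass–Serre line fixing its vertex, so the product $b_0b_1^{-1}=\bar x$ is a nontrivial translation. Pulling this back finishes the case. This is moreover realized concretely on the whole group: by Proposition \ref{TFAE}, $G_\varphi$ in case 2. is an extended Klein bottle group, and its $2-2$ amalgam decomposition is already the one we want, since Corollary \ref{2-2} records that in that decomposition $H=\langle h_1,\dots,h_{n-1}\rangle$ is elliptic and $xh$ is hyperbolic for every $h\in H$. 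Thus no genuinely new construction is needed in case 2.: the $2-2$ machinery of Section \ref{section22} supplies it.

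The only delicate point, and the reason the two cases are not symmetric, is the hyperbolicity of $\bar x$ in case 2. In case 1. the image $\bar x$ has infinite order in $\bar G^{ab}=\Z^2$, so it is already hyperbolic for an orientation-preserving (translation) action, i.e.\ a homomorphism to $\Z$. In case 2., however, $\bar x$ maps to the $2$-torsion of $K^{ab}\cong\Z/2\oplus\Z$, so its translation length vanishes for every homomorphism $K\to\Z$; it can be made hyperbolic only through the dihedral action $K\to D_\infty$ sending $\bar x$ to a translation and $t$ to a reflection, which is exactly the $2-2$ amalgam. This is the heart of the matter, and it has in effect already been carried out in Section \ref{section22}, so the remaining work is only the bookkeeping of the pullback described above.
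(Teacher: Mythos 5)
Your quotient-and-pullback framework is sound as far as ellipticity and hyperbolicity are concerned, and your case 2 is correct (and coincides with the paper's treatment, which simply invokes Proposition \ref{TFAE} and Corollary \ref{2-2}). But there is a genuine gap in case 1. The paper's standing convention (end of the Preliminaries) is that all decompositions considered are over finitely generated \emph{free abelian} groups, and every later use of Lemma \ref{flexible} --- concluding that no element of $xH$ is universally elliptic, and building the decomposition $\Gamma'$ in Proposition \ref{cas1} --- requires the decomposition produced by the lemma to have free abelian edge groups; a witness with non-abelian edge groups says nothing about universal ellipticity as defined in this paper. In your case 1 you pull back the HNN decomposition of $\bar G\cong\Z^2$ with vertex group $\langle \bar t\rangle$ and stable letter $\bar x$. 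The pulled-back vertex and edge group is $\pi^{-1}(\langle \bar t\rangle)=H\rtimes_M\langle t\rangle$, where $M=\varphi_{|H}$. This group is free abelian only when $M=Id$; in case 1 of Proposition \ref{simples}, $M$ is an arbitrary finite-order matrix, so in general your pulled-back decomposition is not an admissible abelian decomposition, and the lemma (as it is actually used) is not proved.

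The missing idea is exactly the nontrivial content of the paper's proof: one must first kill the twisting by $M$ by passing to the power $t^k$, where $k$ is the order of $M$. The paper defines $f:G_\varphi\to\Z$ by $f(t)=1$, $f(x)=-k$, $f(H)=0$ (well defined since $\varphi(H)=H$ and $\varphi(x)x^{-1}\in H$), and observes that the kernel of the resulting translation action on the line is $\langle H, xt^k\rangle$, which \emph{is} free abelian of rank $n$ because $\varphi^k_{|H}=Id$ and $xt^k$ commutes with $H$; meanwhile $f(xh)=-k\neq 0$, so every $xh$ is hyperbolic. In your language this amounts to pulling back the HNN decomposition of $\Z^2$ whose vertex group is $\langle \bar x\,\bar t^{\,k}\rangle$ rather than $\langle \bar t\rangle$: with that choice the preimage is abelian and $\bar x$ is still hyperbolic. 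So your argument is repairable, but the repair is precisely the step your chosen decomposition skips, and as written the proof fails whenever $M\neq Id$.
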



 \begin{proof}
The second case has already been done (see proposition \ref{TFAE} and corollary \ref{2-2}).
It remains the first case.
	
 We produce another decomposition of $G_{{\varphi }}$ as a semi-direct product. Call $k$ the order of $M$.

Define $f: G_\varphi \rightarrow \Z $   by $f(t)=1$, $f(x)=-k$, and 
$f(g)=0$ for all $g\in H$.
As $tHt^{-1}=H$ and $txt^{-1}=xh$ with $h$ in $H$, this is well defined. 

Let $G_\varphi$ act on the line by translations via   $f$.
The kernel of the action is generated by $H$ and $xt^k$ which commute. It is therefore an abelian group $\Z^n$.
 \end{proof}

\begin{proposition}
 If $\varphi$ can be written in one of the following ways 
\begin{enumerate}
    \item $\left(
          \begin{array}{cc}
            1 & 0 \\ 
            p & M
          \end{array}
          \right)$
          with $M$ an $(n-1)\times(n-1)$ matrix of finite order and $p$ in $\Z^{n-1}$,
    \item $\left(
          \begin{array}{cc}
           -1 & 0 \\ 
           p  & Id_{n-1}
          \end{array}
          \right)$ with $p$ in $\Z^{n-1}$
   \end{enumerate}
then the group $G_{\varphi}$ has trivial JSJ decomposition.
\end{proposition}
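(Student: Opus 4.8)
The plan is to show that the one-vertex graph of groups is a JSJ decomposition, which amounts to proving that $G_\varphi$ admits no non-trivial universally elliptic splitting over an abelian subgroup. It suffices to treat one-edge splittings (an amalgam or an HNN extension), since a non-trivial JSJ would, after collapsing all but one edge, yield such a splitting with universally elliptic edge group. So I would assume $\Lambda$ is a non-trivial one-edge splitting of $G_\varphi$ over an abelian group whose edge group $K$ is universally elliptic, and derive a contradiction. As $G_\varphi$ is polycyclic, the Bass--Serre tree of $\Lambda$ is a line $L$ (exactly as in the proof of Lemma \ref{1-1}); let $K$ be the kernel of the action $G_\varphi\to\mathrm{Aut}(L)$, which is the common edge stabilizer.

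The first step is to locate $K$ inside $\Z^n$. Since $K$ is universally elliptic it is elliptic in the standard HNN decomposition $\Z^n\ast_\varphi$, whose vertex group is the normal subgroup $\Z^n$; ellipticity forces $K$ into a conjugate of $\Z^n$, hence, by normality, $K\subseteq\Z^n$. I then restrict the action to $\Z^n$: being torsion-free abelian, $\Z^n$ maps into the translation subgroup of $\mathrm{Aut}(L)\cong D_\infty$, defining a homomorphism $\tilde\ell\colon\Z^n\to\Z$ recording signed translation lengths. Its kernel is precisely the set of elements of $\Z^n$ acting trivially on $L$, that is $\Z^n\cap K=K$; thus $K=\ker\tilde\ell$.

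Now I split into two cases according to $\tilde\ell$. If $\tilde\ell=0$, then $K=\Z^n$, i.e. the whole of $\Z^n$ is the edge group; but Lemma \ref{flexible}, applied with $h$ the identity of $H$, produces a decomposition in which $x\in\Z^n$ is hyperbolic, so $\Z^n$ is not universally elliptic, contradicting that $K=\Z^n$ is. If $\tilde\ell\neq0$, then $\Z^n/K\cong\Z$, so $G_\varphi/K\cong\Z\rtimes_{\bar\varphi}\Z$ is a group of Hirsch length $2$; on the other hand $K$ is the kernel of the action, so $G_\varphi/K$ acts faithfully on the line $L$ and therefore embeds into $D_\infty$, which is virtually cyclic of Hirsch length $1$ --- impossible. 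Either way we reach a contradiction, so no non-trivial universally elliptic splitting exists and the JSJ is trivial.

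I expect the main obstacle to be pinning down the edge group $K$: the argument hinges on the two facts that universal ellipticity together with the normality of $\Z^n$ confines $K$ to $\Z^n$, and that the quotient $G_\varphi/K$ must be two-ended because it acts faithfully on a line. Note that the hypothesis on $\varphi$ enters only through the appeal to Lemma \ref{flexible}, which is exactly what eliminates the remaining possibility $K=\Z^n$; the rest of the reasoning is uniform across the two normal forms $1.$ and $2.$
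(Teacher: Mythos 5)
Your overall strategy is sound and shares the paper's key ingredient: both arguments show that every universally elliptic decomposition of $G_\varphi$ must be trivial, and both invoke Lemma \ref{flexible} at the decisive moment. The mechanics differ, though. The paper analyzes an arbitrary universally elliptic decomposition through a dichotomy on the coset $xH$: if every element of $xH$ is elliptic, then $\Z^n=\langle x,H\rangle$ fixes a unique vertex and normality of $\Z^n$ forces the decomposition to be trivial; if some $y\in xH$ is hyperbolic, the paper exhibits an element $y^pt^q$ with $q\neq 0$ lying in an edge group, which is hyperbolic in the standard HNN extension and hence not universally elliptic. You instead reduce to one-edge splittings, identify the edge group $K$ with the kernel of the action on the line, use universal ellipticity of $K$ together with normality of $\Z^n$ to force $K\subseteq\Z^n$, and then replace the paper's explicit-element construction by a Hirsch-length count: if $K$ had corank $1$ in $\Z^n$, then $G_\varphi/K$ would have Hirsch length $2$ yet embed in $D_\infty$. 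That count is correct, and your case $\tilde\ell=0$ (where $K=\Z^n$) is exactly where Lemma \ref{flexible} must enter, as you say.

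There is, however, one genuine flaw: the claim that $\Z^n$ maps into the translation subgroup of $\mathrm{Aut}(L)\cong D_\infty$ ``being torsion-free abelian'' is not a valid deduction. A torsion-free abelian group can surject onto an order-two subgroup of $D_\infty$ generated by a reflection about a vertex; such reflections are not inversions and really occur in Bass--Serre actions (in the amalgam of Figure \ref{figure2-2}, the infinite-order element $b_0$ acts precisely this way). So your dichotomy $\tilde\ell=0$ versus $\tilde\ell\neq0$ omits a third case, in which $\tilde\ell$ is not defined and $K$ has index $2$ in $\Z^n$. The omission is repairable in either of two ways. First, use normality correctly: since the splitting is non-trivial, the image of $G_\varphi$ in $D_\infty$ is infinite (so $\Z$ or infinite dihedral), the image of $\Z^n$ is a \emph{normal} abelian subgroup of it, and an infinite dihedral group has no normal abelian subgroup containing a reflection; hence the image of $\Z^n$ consists of translations after all. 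Alternatively, handle the extra case directly: if $[\Z^n:K]=2$ then $x^2\in K$, while Lemma \ref{flexible} makes $x$, hence $x^2$, hyperbolic in some decomposition, contradicting the universal ellipticity of $K$. With either patch your proof is complete.
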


 \begin{proof}
  Let $x$, $H$ and $t$ be as in lemma \ref{flexible}. Following lemma \ref{flexible}, no element of ${x }{H } $ is universally elliptic. In every JSJ decomposition each of these elements is hyperbolic or fixes a unique vertex of the Bass-Serre tree $T$ of this decomposition.

  Assume every element of ${x } {H } $ is elliptic. Since ${E}=\langle {x }, {H } \rangle$ is abelian, they all fix the same vertex. So the group ${E}$ is elliptic and fixes a unique vertex $v\in T$. But ${E} $ is normal, so the decomposition is trivial.

  Let us now assume that there exists a hyperbolic element ${y } \in {x } {H }$. Either $t$ is elliptic and $t^2$ fixes the whole tree, or $t$ is hyperbolic and acts by a translation. In particular, there exist $p$ in $\Z$ and $q$ in $\Z^*$ such that ${y } ^pt^q$ is elliptic and belongs to an edge stabilizer. This element, hyperbolic in the initial graph of groups, is not universally elliptic, which is a contradiction.
 \end{proof}

\section{Interactions between subgraphs}\label{sectioninteraction}

For $e$ an edge of a graph of group $\Gamma$, call $\Gamma_e$ the subgraph consisting of the single edge $e$ (and its vertices) and $\Pi_e$ its fundamental group.

Until now, we have shown that the non-universally elliptic edges must be $1-1$ loops or $2-2$ edges (Corollary \ref{rigidite1}). In this section, we characterize the non-universally elliptic edges $e$. We show that necessarily $\Pi_e$ has trivial JSJ decomposition, and that adjacent edge groups are included in a specific hyperplane of $G_e$.


We first prove that two adjacent edges cannot be both non-universally elliptic. 



\begin{lemma}\label{adjacent}

Let $G=\pi_1(\Gamma)$ be a \GGBS~group and $T$ its Bass-Serre tree. Let ${v }$ be a vertex of $\Gamma$. Let ${e }$ and ${f }$ be loops of type $1-1$ or edges of type $2-2$, distinct and adjacent to ${v }$. Then  $G_{ {e }}$ and $G_{ {f }}$ are universally elliptic.
\end{lemma}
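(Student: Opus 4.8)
The plan is to argue by contradiction, and by symmetry it suffices to suppose that $G_{e}$ is not universally elliptic. Since $G_{e}$ is finitely generated free abelian, it fails to be elliptic in some decomposition over free abelian groups precisely when some element $\alpha\in G_{e}$ is hyperbolic in the associated Bass--Serre tree $S$; fix such an $\alpha$ and let $L\subseteq S$ be its axis. Write $P=\Pi_{\{e,f\}}$ for the fundamental group of the subgraph of groups carried by $v$, $e$ and $f$. The heart of the argument will be to show that $P$ stabilises the line $L$, and then to contradict this by the fact that $P$ is too large to act on a line.

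First I would record two elementary facts about the abelian vertex group $G_{v}\supseteq G_{e}\ni\alpha$ acting on $S$: any element commuting with the hyperbolic element $\alpha$ preserves $L$, so $G_{v}$ preserves $L$; and any element of $G_{v}$ that is both elliptic and commutes with $\alpha$ fixes $L$ pointwise, since an elliptic isometry of $\R$ commuting with a nontrivial translation is the identity. In particular the subgroup $H\subseteq G_{v}$ of elements elliptic in $S$ --- a proper subgroup, as $\alpha$ is hyperbolic --- fixes $L$ pointwise.

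Next I would check that the remaining generators of $P$ preserve $L$, treating the two possible types of $e$ and of $f$ in turn. If the edge is a $2-2$ edge with far vertex $w$, then $\alpha^{2}$ lies in its edge group, hence in $G_{w}$; being abelian and containing $\alpha^{2}$, the group $G_{w}$ commutes with $\alpha^{2}$ and so preserves the axis of $\alpha^{2}$, which is again $L$. If the edge is a $1-1$ loop at $v$ with stable letter $t$ and modulus $\varphi$, then $\alpha$ is a hyperbolic element of the $\Z^{n}$-part of $\Pi=\Z^{n}\rtimes_{\varphi}\Z$, so Lemma \ref{1-1}, applied to the decomposition of $\Pi$ induced on the minimal subtree, gives $\varphi(\alpha)\alpha^{-1}\in H$. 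Thus $\varphi(\alpha)=\alpha h$ with $h\in H$ fixing $L$ pointwise, so $\varphi(\alpha)=t\alpha t^{-1}$ has the same axis $L$ as $\alpha$; comparing axes yields $tL=L$. In every case the generators of $P$ preserve $L$, hence so does $P$.

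Finally I would derive the contradiction. Acting on $L\cong\R$, the group $P$ maps to $\mathrm{Isom}(\R)=\R\rtimes\Z/2$, and the kernel of this map fixes $L$ pointwise, hence lies in the $S$-stabiliser of an edge of $L$ and is therefore free abelian. As $\mathrm{Isom}(\R)$ is metabelian, $P$ is then solvable, and in particular contains no nonabelian free subgroup. On the other hand $e$ and $f$ are distinct edges at $v$, each a $1-1$ loop or a $2-2$ edge, so each contributes $2$ to the valence of $\tilde v$ in the Bass--Serre tree $T_{P}$ of $P$; hence $\tilde v$ has valence $\geq 4$ and $T_{P}$ is not a line. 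Since $P$ acts on $T_{P}$ with no invariant line and no global fixed point, it is non-elementary and contains a nonabelian free subgroup --- a contradiction. Thus $G_{e}$, and symmetrically $G_{f}$, is universally elliptic.

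The step I expect to be the main obstacle is the loop case of the third paragraph: showing that the stable letter $t$ preserves $L$ is not formal and rests precisely on the identity $\varphi(\alpha)\alpha^{-1}\in H$ supplied by Lemma \ref{1-1}, without which $t$ could carry $L$ to a parallel or transverse line. The remaining work is bookkeeping --- making the ``preserves $L$'' verification uniform across the four type-combinations of $(e,f)$, and confirming that $P$ really contains a free subgroup, i.e.\ that $T_{P}$ is not a line, which the valence count guarantees.
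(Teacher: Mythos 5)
Your strategy is genuinely different from the paper's, and its core geometric step is sound. The paper argues case by case, exhibiting inside the centralizer of (a power of) a putatively hyperbolic element two explicit elements that generate a free group, contradicting Proposition \ref{centralisateur}; you instead prove the stronger statement that the whole subgroup $P=\Pi_{\{e,f\}}$ preserves the axis $L$, so that $P$ is an extension of a subgroup of $\mathrm{Isom}(\R)$ by a free abelian group, hence solvable. That part essentially works, up to one citation slip: Lemma \ref{1-1} does \emph{not} give $\varphi(\alpha)\alpha^{-1}\in H$ unconditionally; in the lemma's alternative case ($\varphi_{|H}=Id$, the Klein-bottle situation) one may have $\varphi(\alpha)\alpha^{-1}\notin H$. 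This is harmless: since $H$ is a $\varphi$-stable hyperplane, $\varphi(\alpha)\in\alpha H\cup\alpha^{-1}H$ in every case, and either way $\varphi(\alpha)$ translates along $L$ (as elements of $H$ fix $L$ pointwise), so $tL=L$ still follows.

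The genuine gap is in your last paragraph. The implication ``no invariant line and no global fixed point, hence non-elementary, hence contains a nonabelian free subgroup'' is false: the trichotomy for minimal actions on trees also allows a \emph{fixed end}, and a group fixing an end can be solvable. Concretely, $BS(1,2)=\langle a,t\mid tat^{-1}=a^2\rangle$ is itself a \GGBS~group acting minimally on its Bass--Serre tree with no global fixed point and no invariant line, yet it is metabelian; so the valence count alone cannot produce a free subgroup. To close the gap you must rule out that $P$ fixes an end of $T_P$ (and also check minimality of $T_P$, which is what excludes an invariant line). Both can be done using the $1$-$1$/$2$-$2$ structure: if $e$ and $f$ are both $1$-$1$ loops, then $G_v$ is normal in $P$ with $P/G_v\cong F_2$, which lifts to a free subgroup; if, say, $f$ is a $2$-$2$ edge with far vertex $w$, then were an end fixed, any two elliptic elements would fix rays to that end, hence a common point, so their product would be elliptic --- but for $x\in G_w\setminus G_f$ and $z\in G_v\setminus G_f$ the product $xz$ is hyperbolic in $T_P$ by Bass--Serre normal form, a contradiction. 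Note that this repair is essentially the ping-pong data the paper uses directly ($x_1z$ and $x_2z$, or $xz$ and $txzt^{-1}$, inside a centralizer, against Proposition \ref{centralisateur}), so the explicit case analysis does not disappear; your approach repackages it but does not avoid it.
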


\begin{proof}
Let $\tilde {e }$, $\tilde {f }$ and $\tilde {v }$ be   preimages in $T$ of ${e }$, ${f }$ and ${v }$ respectively such that $\tilde {e }$ and $\tilde {f }$ have $\tilde {v }$ as common vertex. We call $\tilde {w_1 }$ and $\tilde {w_2 }$ the second vertices of  $\tilde {e }$ and $\tilde {f }$ respectively.
There are three cases to handle.
\begin{enumerate}
  \item The edges $\tilde {e }$ and $\tilde {f }$ are of type $1-1$.

Let $t$ and $t'$ such that $t\cdot \tilde {v }= \tilde {w_1 }$ and  $t'\cdot \tilde {v }= \tilde {w_2 }$
Take ${y }\in G_{\tilde {v }}=G_{\tilde {e }}=G_{\tilde {f }}$ and suppose ${y }$ is not universally elliptic. Let $\Gamma'$ be a graph of groups in which ${y }$ is hyperbolic. As ${y }$, $t{y } t^{-1}$ and $t'{y } t'^{-1}$ commute, the elements $t$ and $t'$ must stabilize the axis of ${y }$. So there exist $p$, $q$, $r$ and $s$ integers such that $q\neq 0$,  $s\neq 0$, and ${y }^pt^q$ and ${y }^rt'^s$ are elliptic with characteristic spaces containing the axis of ${y }$. So ${y }^pt^q$ and ${y }^rt'^s$ must commute. Yet their projections in the (topological) fundamental group of the graph $\Gamma$ generate a free group of rank $2$, which is a contradiction.

\item The edges $ \tilde {e }$ and $\tilde {f }$ are of type $2-2$.

 We show that every ${y } \in G_{\tilde {e }}$ is universally elliptic. Replacing ${y }$ by ${y }^2$ we may assume that ${y } \in G_{\tilde {f }}$. Let us fix $x_1\in G_{\tilde {w_1 }}\setminus G_{\tilde {e }}$, $x_2 \in G_{\tilde {w_2 }}\setminus G_{\tilde {f }}$ and $z\in G_{\tilde {v }}\setminus (G_{\tilde {e }}\cup G_{\tilde {f }})$. So $x_1z$ and $x_2z$ are in the centralizer of ${y }$, since $x_1$, $x_2$ and $z$ are in it. Yet those are hyperbolic elements with distinct axis crossing precisely in $\tilde {v }$, they generate a free group. Applying proposition \ref{centralisateur} ${y }$ is universally elliptic.

 \item The edge $\tilde {e }$ is of type $1-1$ and $\tilde {f }$ of type $2-2$.

Let $t$ be as in the first case. Let us fix $x\in G_{\tilde {w_2 }} \setminus G_{\tilde {f }}$ and $z \in G_{\tilde {v }} \setminus G_{\tilde {f }}$. In the same way as last case, the elements $xz$ and $txzt^{-1}$ are in the centralizer of $G_{\tilde {e }}^2$ (the set of squares of $G_{\tilde e}$) and generate a free group.
\end{enumerate}
\end{proof}

\begin{proposition}\label{cas1}

Let ${G }=\pi_1(\Gamma)$ be a \GGBS~group. Let ${e }$ be a $1-1$ loop based at a vertex ${v }$ and $\varphi\in Aut(G_v)$ the modulus of ${e }$. Let $\tilde G_{v } \subset G_{v }$ be the group generated by the groups of adjacent edges, ${e }$ excepted. Then $G_{e }$ is universally elliptic if and only if there is no decomposition $G_v=\langle x \rangle \times H$ such that 
 \begin{enumerate}
   \item \label{uns} $\tilde G_{v } \subset H$,
   \item \label{bis} $\varphi$ stabilize $H$
   \item \label{ter} \begin{itemize}[leftmargin=0.3cm, noitemsep]
          \item[-] either $\varphi(x)x^{-1}\in H$ and $\varphi$ act on $H$ with finite order,
	  \item[-] or $\varphi(x)x\in H$ and $\varphi_{|H}=Id$.
         \end{itemize}
 \end{enumerate}
\end{proposition}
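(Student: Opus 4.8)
The plan is to translate universal ellipticity of $G_e$ into a statement about splittings of the single subgroup $\Pi_e=G_v\ast_\varphi=G_\varphi$ and then to feed this into the local analysis of Section \ref{section11}. Since $e$ is a $1-1$ loop we have $G_e=G_v$, so $G_e$ is universally elliptic exactly when $G_v$ is elliptic in every decomposition of $G$; by the localization principle \cite[lemma 4.10]{GL3a} this reduces to asking whether $\Pi_e$ admits a splitting, relative to the adjacent edge groups, in which some element of the $\Z^n$-part $G_v$ is hyperbolic. I would therefore prove the two contrapositive implications: (a) if a decomposition $G_v=\langle x\rangle\times H$ satisfying \ref{uns}--\ref{ter} exists, then $G_e$ is not universally elliptic; (b) if $G_e$ is not universally elliptic, then such a decomposition exists. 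Note that \ref{bis}--\ref{ter} say precisely that, in a basis $(x,h_1,\dots,h_{n-1})$ adapted to $H=\langle h_1,\dots,h_{n-1}\rangle$, the modulus $\varphi$ has one of the two normal forms of Proposition \ref{simples}, so Section \ref{section11} provides exactly the local input needed.

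For direction (a), I would invoke Lemma \ref{flexible}: since $\varphi$ is in one of the two forms in the basis $(x,h_1,\dots,h_{n-1})$, the group $\Pi_e$ has a decomposition $\Delta$ in which $H$ is elliptic and $xh$ is hyperbolic for every $h\in H$; in particular $x\in G_e$ is hyperbolic. Because $\tilde G_v\subset H$ by \ref{uns} and $H$ is finitely generated abelian and elliptic, all groups of edges adjacent to $v$ other than $e$ fix a common vertex $w$ of $\Delta$ and lie in $G_w$. This is exactly the compatibility needed to refine $\Gamma$: I would blow up the loop $e$ at $v$ according to $\Delta$, reattaching each remaining adjacent edge at $w$, to obtain a global graph of groups $\Gamma'$ with $\pi_1(\Gamma')=G$. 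Collapsing $\Gamma'$ back onto the essential splitting of $\Delta$ keeps $x$ hyperbolic, so $x$ is hyperbolic in $\Gamma'$ and $G_e$ is not universally elliptic.

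For direction (b), suppose $G_e$ is not universally elliptic, so there is a decomposition $\Lambda$ of $G$ and an element $y\in G_v$ hyperbolic in $\Lambda$. Restricting the action of $\Pi_e$ to its minimal invariant subtree $L$ of the Bass--Serre tree of $\Lambda$ (a line, since $\Pi_e$ is polycyclic and contains the hyperbolic element $y$) yields a decomposition of $\Pi_e$ for which $y$ is hyperbolic, so Lemma \ref{1-1} applies and produces a hyperplane $H\subset\Z^n$ that is elliptic, $\varphi$-stable, with $\varphi_{|H}$ of finite order, and equal to the set of all elliptic elements of $\Z^n$. To obtain \ref{uns}, I would note that by Lemma \ref{adjacent} (together with Corollary \ref{rigidite1} for the other edge types) every edge $f$ adjacent to $v$ with $f\neq e$ is universally elliptic; hence $G_f$ is elliptic in $\Lambda$, thus elliptic on $L$, thus contained in $H$, giving $\tilde G_v\subset H$. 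Finally, writing $G_v=\langle x\rangle\times H$ and using that $\varphi$ induces $\pm1$ on $G_v/H\cong\Z$, I would split into two cases: if $\varphi$ induces $+1$ then $\varphi(x)x^{-1}\in H$ and $\varphi_{|H}$ has finite order, the first bullet of \ref{ter}; if $\varphi$ induces $-1$ then $\varphi(x)x\in H$, and since $\varphi(y)y^{-1}\notin H$ (because $y\notin H$) the last clause of Lemma \ref{1-1} forces $\varphi_{|H}=Id$, the second bullet.

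The main obstacle I anticipate is the passage between the global decomposition of $G$ and the local splitting of $\Pi_e$ in both directions: in (a) one must check that the blow-up of $\Gamma$ along $\Delta$ is a genuine graph-of-groups decomposition of $G$ and that the hyperbolicity of $x$ survives, and in (b) one must ensure that restricting to the minimal $\Pi_e$-subtree $L$ preserves both the hyperbolicity of $y$ and the ellipticity of the adjacent edge groups. Both rest on the standard fact that along a collapse map, and on passing to an invariant subtree, translation lengths cannot increase, so elliptic elements stay elliptic and the hyperbolic witness is preserved; once this is in place, the remaining bookkeeping with the $\pm1$ action on $G_v/H$ is routine.
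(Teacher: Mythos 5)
Your proposal is correct and takes essentially the same route as the paper: your direction (a) is the paper's refinement of $\Gamma$ built from Lemma \ref{flexible} (with $\tilde G_{v }\subset H$ elliptic allowing the adjacent edges to be reattached at the vertex containing $H$), and your direction (b) is the paper's combination of Lemma \ref{1-1} with Corollary \ref{rigidite1} and Lemma \ref{adjacent}. The only differences are cosmetic: you make explicit the restriction to the minimal $\Pi_e$-invariant subtree before applying Lemma \ref{1-1}, and you re-derive condition \ref{ter} from the $\pm1$ action on $G_v/H$, both of which the paper leaves implicit.
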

 
\begin{proof}

  Assume $G_{e }$ is not universally elliptic, and let $\Gamma'$ be a decomposition in which $G_{e}$ is not elliptic. From lemma \ref{1-1}, there exist a decomposition $G_v=\langle x\rangle \times H$ satisfying \ref{bis} and \ref{ter} such that $H$ is exactly the set of all elliptic elements of $G_v$ in the decomposition $\Gamma'$. By corollary \ref{rigidite1} and lemma \ref{adjacent} , the group $\tilde G_{v }$ is universally elliptic
, so $\tilde G_{v }$ is included in $H$.

  Conversely, suppose such a decomposition  $\langle x\rangle\times H$ exists. 

Let $t$ be a stable letter of $e$.
Take $(h_1,\dots,h_{n-1})$ as basis of $H$, completing it by $x$ to make a basis $(x,h_1,\dots,h_{n-1})$ of $G_{v }$. Then $t$ act on $G_{v }$ by a linear map $
\left(
\begin{array}{cc}
\varepsilon & 0 \\ 
p & M
\end{array}
\right)$ with $\varepsilon=\pm1$ , $M$ a finite order matrix and $M=Id$ if $\varepsilon=-1$. We can apply lemma \ref{flexible}, and so there is a graph of groups $\Lambda$ of $\langle G_{v }, t\rangle$ in which $x$ is hyperbolic and $H$ is elliptic. Call $v'$ an vertex of $\Lambda$ with $H\subset G_{v'}$.
 We can construct a new graph of group decomposition $\Gamma'$ of $G$ as follows. The underlying graph is obtained by removing $e$ from $\Gamma$ and gluing $\Lambda$ by adding an edge between  $v\in\Gamma$ and $v'\in\Lambda$. We define the vertex groups in the following way:
\begin{itemize}[noitemsep]
 \item for every vertex $w$ of $\Gamma'$ coming from a vertex of $\Gamma\setminus \{v\}$, we define $G_w(\Gamma')=G_w(\Gamma)$,
\item for every edge $f$ of $\Gamma'$ coming from an edge of $\Gamma$ not adjacent to $v$, we define $G_f(\Gamma')=G_f(\Gamma)$ with the natural inclusions in the adjacent vertices,
 \item $G_v(\Gamma')=H$,
 \item for every edge $f$ of $\Gamma'$ coming from an edge of $\Gamma$ adjacent to $v$, we define $G_f(\Gamma')=G_f(\Gamma)$, with the inclusion $G_f(\Gamma') \hookrightarrow G_{v}(\Gamma')$ coming from the inclusions $G_f(\Gamma)\subset H$ coming from the assumption \ref{uns},
 \item for every vertex $w$ of $\Gamma'$ coming from a vertex of $\Lambda$ including $v'$, we define $G_w(\Gamma')=G_w(\Lambda)$,
 \item for every edge $f$ of $\Gamma'$ coming from an edge of $\Lambda$, we define $G_f(\Gamma')=G_f(\Lambda)$ with the natural inclusions in the adjacent vertices,
 \item for $f$ the edge between $\Gamma$ and $\Lambda$, we define $G_f=H$, with natural inclusions $G_f=H\xrightarrow{Id}G_{v}(\Gamma')=H$ and $G_f\hookrightarrow G_{v'}(\Gamma')$ coming from $H\subset G_{v'}(\Lambda)$. 
\end{itemize}
Using the isomorphism $G_{v'}\simeq G_{v'}\ast_H H$, we easily check that $\pi_1(\Gamma')=G$
In particular $\Gamma'$ is a decomposition of $G$ in which $x$ is hyperbolic. The element $x$ is not universally elliptic.
\end{proof}

The case of a type $2-2$ edge is similar.

\begin{proposition}\label{cas2}

Let $G = \pi_1(\Gamma)$ be a \GGBS~group. Let ${e }$ be an edge of type $2-2$ with vertices ${v }$ and ${v' }$. We identify $G_e$ with its images into $G_v$ and $G_{v'}$. Then $G_{e }$ is universally elliptic if and only if there is no hyperplane $H$ of $G_{e }$ such that $H$ is also a hyperplane of $G_{v }$ and $G_{{v' }}$, and $H$ contains all groups of adjacent edges, $G_{e }$ excepted.

If $G_e$ is not universally elliptic, a decomposition $\Gamma'$ in which $G_e$ is not elliptic may be obtain from $\Gamma$ by replacing $e$ by a $1-1$ loop whose stabilizer is not universally elliptic (see figure \ref{decomp}).
\end{proposition}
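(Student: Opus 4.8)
The plan is to mirror the proof of Proposition \ref{cas1}, replacing the $1-1$ loop structure by the Klein bottle structure of $\Pi_e$ furnished by Proposition \ref{TFAE} and Corollary \ref{2-2}. Throughout I identify $\Pi_e$ with a semi-direct product $\Z^n\rtimes_\varphi\langle t\rangle$, where $\varphi=\left(\begin{smallmatrix}-1&0\\ p&Id_{n-1}\end{smallmatrix}\right)$ in a basis $(x,h_1,\dots,h_{n-1})$, write $N=\langle x\rangle\times H_0$ for the normal $\Z^n$ with $H_0=\langle h_1,\dots,h_{n-1}\rangle$, and set $A=N\langle t^2\rangle$. Since $\varphi^2$ acts trivially on $N$, the group $A\cong\Z^{n+1}$ is abelian of index $2$, and one checks $G_e,N\subseteq A$ with $G_v\cap A=G_{v'}\cap A=G_e$; moreover $[t,x]=x^{-2}h$, where $h\in H_0$ is defined by $\varphi(x)=x^{-1}h$, is a nontrivial generator of the (infinite cyclic) commutator subgroup $\Pi_e'$.

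For the implication ``such an $H$ exists $\Rightarrow$ $G_e$ is not universally elliptic'', I would start from a common hyperplane $H$ and recover this semi-direct structure with $H$ in the role of $H_0$: choosing $b_0\in G_v$ and $b_1\in G_{v'}$ generating $G_v/H$ and $G_{v'}/H$ with $b_0^2\equiv b_1^2\pmod H$, and setting $x=b_0b_1^{-1}$, $t=b_0$, one obtains $\Pi_e=(\langle x\rangle\times H)\rtimes_\varphi\langle t\rangle$ exactly as in Corollary \ref{2-2}. Viewed as an HNN extension over $N$ this is a $1-1$ loop $\Lambda$ in which $H$ and $x$ are elliptic while $b_0^2\in G_e$ is hyperbolic, so $G_e$ is not elliptic in $\Lambda$ and the loop stabilizer $N$ is not universally elliptic (its element $x$ is hyperbolic in the $2-2$ amalgam). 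I would then build $\Gamma'$ from $\Gamma$ by deleting the subgraph $\Gamma_e$ and inserting $\Lambda$, re-attaching every adjacent edge $f$ to the single vertex of $\Lambda$ through $G_f\subseteq H\subseteq N$ --- this is where the hypothesis that $H$ contains the adjacent edge groups is used. As in Proposition \ref{cas1}, the external edge groups inject into $\Pi_e$ with the same image inside $H$, so $\pi_1(\Gamma')=G$ and $G_e$ is not elliptic in $\Gamma'$; this is the move of figure \ref{decomp}.

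For the converse I would take a decomposition $\Gamma'$ with Bass--Serre tree $T'$ in which some $y\in G_e$ is hyperbolic, and let $\ell$ be its axis, which is the minimal $\Pi_e$-invariant subtree and a line since $\Pi_e$ is polycyclic (as in the proof of Lemma \ref{1-1}). The abelian group $A\supseteq\langle y\rangle$ acts on $\ell$ by translations, and a short argument shows $t$ also preserves the orientation of $\ell$: conjugation by $t$ fixes $G_e$ pointwise, so an orientation-reversal would force the translation length to vanish on all of $G_e$, contradicting that $y$ is hyperbolic. Hence the signed translation length is a homomorphism $\tau\colon\Pi_e\to\Z$ whose kernel $S$ is the pointwise stabilizer of $\ell$. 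I would then set $H:=S\cap G_e=\ker(\tau|_{G_e})$, a hyperplane of $G_e$; to see it is also a hyperplane of $G_v$ and $G_{v'}$ it suffices to prove $S\subseteq A$, since then $S\cap G_v=S\cap G_{v'}=H$. Finally each adjacent edge group is universally elliptic by Corollary \ref{rigidite1} and Lemma \ref{adjacent}, hence elliptic and commuting with $y$, so it fixes $\ell$ and lies in $S\cap G_v=S\cap G_{v'}=H$, yielding the required common hyperplane.

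The crux, and the one genuinely new point compared with Proposition \ref{cas1}, is the inclusion $S\subseteq A$, i.e. that $S$ contains no element of odd $t$-exponent. Here I would use that $S$, fixing an edge of $\ell$, lies in an edge stabilizer of $\Gamma'$ and is therefore free \emph{abelian}. If some $t^{2k+1}w\in S$ existed, then for every $z\in S\cap N$ one has $[t^{2k+1}w,z]=\varphi^{2k+1}(z)z^{-1}=\varphi(z)z^{-1}$, so abelianity of $S$ forces $\varphi(z)=z$, whence $z\in H_0$ and $S\cap N\subseteq H_0$. Combined with $x\notin H_0$ this gives $\tau(x)\neq0$, while $\ker(\tau|_N)\subseteq H_0$ forces $\tau(H_0)=0$; but then the identity $2\tau(x)=\tau(h)$ with $h\in H_0$ yields $\tau(x)=0$, a contradiction. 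Thus the abelianity of edge groups is exactly what rules out the orientation-preserving ``bad'' configurations, and I expect this commutator computation to be the main technical obstacle; the remaining rank and primitivity bookkeeping is routine.
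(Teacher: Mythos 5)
Your proposal is correct, and its skeleton coincides with the paper's: the forward direction is the same collapse-and-expand move (collapse $e$, then blow the resulting vertex up into a $1-1$ loop over $N=\langle x\rangle\times H$, re-attaching the adjacent edges through $G_f\subseteq H\subseteq N$), and the converse identifies the elliptic part of $G_e$ in $\Gamma'$ as the required common hyperplane, invoking Corollary \ref{rigidite1} and Lemma \ref{adjacent} exactly as the paper does. The differences lie in the execution of the two key steps. In the forward direction, the paper splits into cases according to whether the element $h$ with $x_v^2=x_{v'}^{2}h$ is a square in $H$, and uses Tietze transformations to reach the standard presentations $K\times E$ or $K'\times F$ before applying Corollary \ref{2-2}; your direct verification that $\Pi_e=(\langle x\rangle\times H)\rtimes_\varphi\langle t\rangle$ (via the identity $txt^{-1}x=b_0^2b_1^{-2}\in H$) treats both cases uniformly, a genuine streamlining. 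In the converse, the crux --- that nothing outside $G_e$ lies in the pointwise stabilizer of the axis --- is handled by genuinely different arguments: the paper shows that every element of $\lambda G_e\cup\mu G_e$ is hyperbolic in $\Gamma'$ by exhibiting the two conjugates $\lambda\beta$ and $\mu\lambda\beta\mu^{-1}$, which commute with the hyperbolic element but not with each other, whereas you prove the stronger inclusion $S\subseteq A$ in coordinates, via the translation homomorphism $\tau$, the commutator identity $[t^{2k+1}w,z]=\varphi(z)z^{-1}$ inside the abelian group $S$, and the universal relation $2\tau(x)=\tau(h)$. Both hinge on the same fact (the pointwise stabilizer sits in a free abelian edge group of $\Gamma'$); the paper's conjugation trick is coordinate-free and shorter, while your computation buys the cleaner structural conclusion $S\subseteq A$, giving $S\cap G_v=S\cap G_{v'}=H$ in one stroke, at the cost of fixing the identification of $\Pi_e$ from Proposition \ref{TFAE} at the outset. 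One minor point: the cleanest justification that the axis $\ell$ is $\Pi_e$-invariant is not polycyclicity but normality --- every $\Pi_e$-conjugate of $y$ lies in the abelian normal subgroup $G_e$, hence commutes with $y$ and shares its axis --- though your appeal to polycyclicity can also be made rigorous.
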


\begin{exemple}

Let $G_1$ and $G_2$ be two \GGBS~groups with defining graphs $\Gamma_1$ and $\Gamma_2$
 and $v\in \Gamma_1$, $v'\in \Gamma_2$ two vertices with groups $\Z^n$. We construct a new \GGBS~group with graph of groups $\Gamma$ as the union of $\Gamma_1$ and $\Gamma_2$ and a new edge $e$ between $v$ and $v'$. We define $G_{v}(\Gamma)=G_{v}(\Gamma_1)\times \langle a \rangle$ and $G_{v'}(\Gamma)=G_{v}(\Gamma_2)\times \langle b \rangle$. We then define the edge group $G_e$ to be $\Z^n\times \langle c\rangle$ identifying $c$ with $a^2$ in $G_{v}(\Gamma)$ and with $b^2$ in $G_{v'}(\Gamma)$ and the $Z^n$ part with $G_v(\Gamma_1)$ and $G_{v'}(\Gamma_2)$. 

The group $\pi_1(\Gamma)$ has an other decomposition $\Gamma'$, with underlying graph obtained gluing $\Gamma_1$ and $\Gamma_2$ by identifying $v$ and $v'$ together, and adding a loop $l$ over the new vertex. Define $G_v(\Gamma')=\Z^n\times\langle d \rangle$. The loop carries the HNN extension with sable letter $t$ define by $tdt^{-1}=d^{-1}$ and $tht^{-1}=h$ for all $h$ in $\Z^n$. 
To obtain the isomorphism between $\Pi_l$ and $\Pi_e$, it suffices to identify $t$ with $a$, $d$ with $ab^{-1}$, and the $\Z^n$ parts together.  

In $\Gamma$ the group $\tilde K=\Pi_e=G_{v}(\Gamma)\ast_{G_e} G_{v'}(\Gamma)$ is a extended Klein bottle group.
The $\Z^n$ part of $G_{e}(\Gamma)$ plays the role of $H$ in proposition \ref{cas2}.
 
\begin{figure}[!ht]
\begin{center}
\scalebox{1} 
{
\begin{pspicture}(0,-1.62)(10.865468,1.62)
\psbezier[linewidth=0.04,linecolor=red](8.78,-0.2)(8.58,1.2)(9.38,1.6)(9.58,1.4)(9.78,1.2)(9.78,0.6)(8.78,-0.2)
\usefont{T1}{ptm}{m}{n}
\rput(4.255,0.115){$v'$}
\usefont{T1}{ptm}{m}{n}
\rput(5.695,-0.285){$G'$}
\psline[linewidth=0.04cm](4.0,-0.2)(5.8,0.2)
\psline[linewidth=0.04cm](4.0,-0.2)(5.2,-1.6)
\psline[linewidth=0.04cm](4.0,-0.2)(5.6,-0.8)
\psline[linewidth=0.04cm,linecolor=red](1.8,-0.2)(4.0,-0.2)
\psdots[dotsize=0.12](3.94,-0.2)
\usefont{T1}{ptm}{m}{n}
\rput(8.465,-0.885){$v=v'$}
\usefont{T1}{ptm}{m}{n}
\rput(10.474999,-0.285){$G'$}
\psline[linewidth=0.04cm](8.78,-0.2)(10.58,0.2)
\psline[linewidth=0.04cm](8.78,-0.2)(9.98,-1.6)
\psline[linewidth=0.04cm](8.78,-0.2)(10.38,-0.8)
\usefont{T1}{ptm}{m}{n}
\rput(7.2749996,-0.285){$G$}
\psline[linewidth=0.04cm](8.78,-0.2)(7.58,0.8)
\psline[linewidth=0.04cm](8.78,-0.2)(6.98,-1.0)
\psdots[dotsize=0.12](8.78,-0.2)
\usefont{T1}{ptm}{m}{n}
\rput(2.055,0.115){$v$}
\usefont{T1}{ptm}{m}{n}
\rput(0.89500004,-0.085){$G$}
\psline[linewidth=0.04cm](1.8,-0.2)(0.6,0.8)
\psline[linewidth=0.04cm](1.8,-0.2)(0.0,-1.0)
\psdots[dotsize=0.12](1.8,-0.2)
\end{pspicture} 
}
\vspace{0.5cm}\\

\end{center}
\caption{The left graph represent $\Gamma$, the right one $\Gamma'$.}
\label{decomp}
\end{figure}
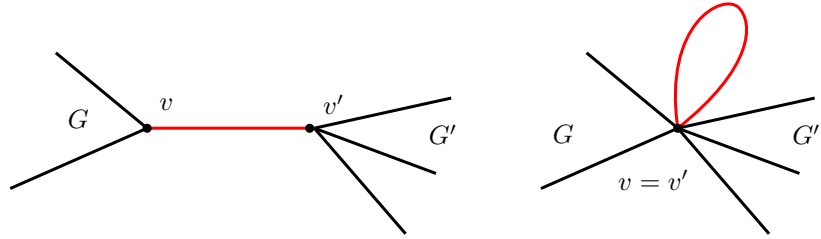

\end{exemple}
\begin{proof}
 We call $\tilde K$ the group $G_{v }\ast_{G_{e }} G_{{v' }}$. By proposition \ref{TFAE}
 $\tilde K$ is an extended Klein bottle group, so we have $\tilde K=\langle \lambda,\mu\ |\ \mathcal R\rangle \times L$ with $L$ a free abelian subgroup, $\lambda \in G_v \setminus G_e$, $\mu \in G_{v'} \setminus G_e$ and $\mathcal R$ the relators of the -twisted or not- Klein bottle group. We have $G_{e }=\langle \lambda^2,\mu^2\rangle \times L$ with $\lambda^2=\mu^2$ in the case of the untwisted Klein bottle group.

 First, assume there exists a hyperplane $H$ of $G_{e }$ which is also a hyperplane of $G_{v }$ and $G_{{v' }}$ and contains all groups of adjacent edges, $G_e$ excepted. Let us prove $G_e$ is not universally elliptic.

We can fix $x_{v }$ and $x_{{v' }}$ two elements such that $G_{v } = \langle x_{v }, H \rangle$ and $G_{{v' }}= \langle x_{{v' }}, H \rangle$. As $G_{e }$ has index 2 in both $G_{v }$ and $G_{{v' }}$, and $H$ is included in $G_{e }$, we must have $G_{e } = \langle x^2_{v }, H \rangle = \langle x^2_{{v' }}, H \rangle$. So there exists $h\in H$ such that $\tilde K=\langle x_{v },x_{{v' }}, H | x^2_{v } = x^{\pm2}_{{v' }}h, [H,x_{v }]=[H,x_{v' }]=1\rangle$. Up to taking the inverse of $x_{v'}$, one may assume $x^2_{v } = x^{2}_{{v' }}h$.

  If $h$ is a square of an element $h'$, replacing $x_{{v' }}$ by $x_{{v' }}h'$, the group $\tilde K$ may have the presentation $\tilde K=\langle x_{v }, x_{{v' }} | x_{v }^2=x_{{v' }}^2\rangle \times H$. By lemma \ref{2-2}, the group $\tilde K$ admit a decomposition in semi-direct product ({\emph i. e.} a graph of groups with one vertex and one edge) in which $H$ and $x_{v }x_{{v' }}^{-1}$ are elliptic and $x_{v }$ is hyperbolic.
 
  By collapsing $e$, we obtain a new vertex $v'$ carrying the group $\tilde K$. Since the groups of edges adjacent to $G_{v }$ et $G_{v' }$ are included in $H$, we can expand $v'$ in a $1-1$ loop with $H$ and $x_{v }x_{{v' }}^{-1}$ elliptic and $x_{v }$ hyperbolic.
  So the subgroup $G_{e }$ is not universally elliptic, and the edge ${e }$ may be replace by a loop of type $1-1$ which is not universally elliptic.

  If $h$ is not a square, up to modifying $h$ and $x_{v'}$ by a square of $H$ we can assume $h$ is primitive. Let $H'$ such that $H=\langle h \rangle\times H'$. By the Tietze transformation consisting of replacing $h$ by $x_{v'}^{-2}x_v^2$, the relations of commutation $x_vh=hx_v$ and $x_{v'}h=hx_{v'}$ become $x_vx_{{v' }}^2=x_{{v' }}^2x_v$ and $x_{v }^2x_{{v' }}=x_{{v' }}x_{v }^2$. So the group $\tilde K$ admits a presentation $\tilde K=\langle x_{v }, x_{{v' }} | x_{v }^2x_{{v' }}=x_{{v' }}x_{v }^2, x_vx_{{v' }}^2=x_{{v' }}^2x_v\rangle \times H'$. The lemma \ref{2-2} assure us to have a decomposition of $\tilde K$ in graph of groups with one vertex and one loop in which $H'$, $x_{v }^2x_{{v' }}^{-2}=h$ and $x_{v }x_{{v' }}^{-1}$ are elliptic and $x_{v }$ is hyperbolic. Then $H=\langle h, H'\rangle$ is elliptic, the same argument permits to extend the construction to the whole group.

  In both cases, the $2-2$ edge can be removed and replaced by a $1-1$ loop with the element $x_vx_{v'}^{-1}$ belonging to the edge group. This edge cannot be universally elliptic. 

  Conversely, if ${e }$ is not universally elliptic, let ${\alpha } \in G_{e }$ be a non universally elliptic element and $\Gamma'$ a graph of groups in which ${\alpha }$ is hyperbolic with axis $\mathcal A$.
 The set of the elliptics of  $G_{e }$ in $\Gamma'$ is exactly a hyperplane $H$. It remains to show that $H$ has the properties we want.

  We first claim that $H$ is exactly the set of elements of $\tilde K$ elliptic in both $\Gamma$ and $\Gamma'$.

  Let ${\beta }$ be an element of $G_{e }$, as $\lambda$ belongs to $G_v\setminus G_e$ and $\mu$ to $G_{v'}\setminus G_e$, the elements  $\lambda {\beta }$ and $\mu\lambda {\beta }\mu^{-1}$ do not commute. However they both commute with $\alpha$ so their characteristic spaces contain the axis $\mathcal A$ in the Bass-Serre tree of $\Gamma'$. Necessarily $\lambda {\beta }$ is hyperbolic: since the edge groups are abelian, if $\lambda {\beta }$ is elliptic in $\Gamma'$, the elements $\lambda {\beta }$ and $\mu\lambda {\beta }\mu^{-1}$ should commute.  And so each element of $\lambda G_{e }$ is hyperbolic. The same argument works to show that every element in $\mu G_{e }$ is hyperbolic in $\Gamma'$.

  Yet every elliptic element of $\tilde K$ for the decomposition $\Gamma$ is conjugated to an element of $G_{v }$ or $G_{{v' }}$, and so is conjugate to an element of $$G_{e } \cup \lambda G_{e } \cup \mu G_{e }.$$
  The elements of $\tilde K$  elliptic in both $\Gamma$ and $\Gamma'$ belongs to $G_e$, hence to $H$. This prove the claim.

  By the claim $H$ contains every elliptic element of $G_{v }$ and $G_{{v' }}$ for the decomposition $\Gamma'$. As $H$ is a subgroup of rank $n-1$ in both $G_{v }$ and $G_{{v' }}$, it is a hyperplane in both $G_{v }$ and $G_{{v' }}$. 
  Moreover, by corollary \ref{rigidite1} and lemma \ref{adjacent}, the group of every edge adjacent to ${e }$ is universally elliptic. So each of these edge groups must be include in $H$.
\end{proof}

\section{\texorpdfstring{JSJ decomposition of \GGBS~group}{}}\label{finalsection}

Let $G=\pi_1(\Gamma)$ be a \GGBS~group and ${v }$ a vertex of $\Gamma$. Let $\tilde G_{v }\subset G_v$ the group generated by the groups of edges adjacent to ${v }$. If $G_{v } / \tilde G_{v }$ is infinite, then there exists a hyperplane $H$ of $G_{v }$ containing $\tilde G_{v }$. 
Then one can replace the vertex ${v }$ by an HNN extension with vertex and edge groups equal to $H$ and the modulus of the edge equal to the identity. 
Call ${v' }$ and ${e }$ the new vertex and the new edge. We have the equality $G_{{v' }}=G_{{e }}=H$, and so ${e }$ is of type $1-1$. We call this construction the \emph{expansion} of $G_{v }$ over $H$ (see figure \ref{expansion}).

\begin{figure}[!ht]
\begin{center}
\begin{pspicture}(1,-0.8)(11.507187,1.8)
\psline[linewidth=0.04cm,dotsize=0.07055555cm 2.0]{*-*}(1.761875,0.0)(4.161875,0.0)
\psline[linewidth=0.04cm,dotsize=0.07055555cm 2.0]{*-*}(7.561875,0.0)(9.761875,0.0)
\psellipse[linewidth=0.04,dimen=outer](10.561875,0.0)(0.8,0.8)
\usefont{T1}{ptm}{m}{n}
\rput(1.8354688,0.515){$\langle a, b \rangle$}
\usefont{T1}{ptm}{m}{n}
\rput(4.2254686,0.515){$\langle c, d \rangle$}
\rput(4.2254686,-0.285){$v$}
\usefont{T1}{ptm}{m}{n}
\rput(7.4354687,0.515){$\langle a, b \rangle$}
\usefont{T1}{ptm}{m}{n}
\rput(9.395469,0.515){$\langle c \rangle$}
\rput(9.595469,-0.285){$v'$}
\usefont{T1}{ptm}{m}{n}
\rput(11.695469,-0.085){$\langle c \rangle$}
\rput(11.2,0.715){$e$}
\usefont{T1}{ptm}{m}{n}
\rput(3.1454687,-0.485){$\langle b\rangle =\langle c\rangle$}
\usefont{T1}{ptm}{m}{n}
\rput(8.545468,-0.285){$\langle b\rangle =\langle c\rangle$}
\usefont{T1}{ptm}{m}{n}
\end{pspicture} \vspace{0.5cm}\\
\end{center}
\caption{Expansion of the right vertex over $\langle c\rangle$.}
\label{expansion}
\end{figure}
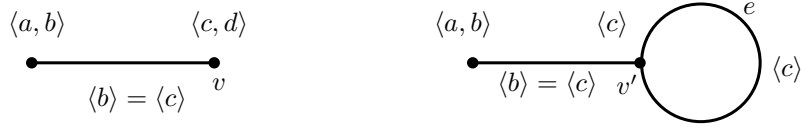

\begin{lemma}\label{stable}

  Let $G=\pi_1(\Gamma)$ be a \GGBS~group. Let ${v }$ be vertex,
 if $G_{v }/{\tilde G_{v }}$ is virtually cyclic, then there is a unique expansion of $G_{v }$ and it creates a universally elliptic edge.
 \end{lemma}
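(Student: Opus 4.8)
The plan is to reduce both claims to elementary facts about the finitely generated abelian group $Q=G_v/\tilde G_v$, together with Proposition~\ref{cas1}. Since $G_v\cong\Z^n$, the quotient $Q$ is finitely generated abelian, and the hypothesis that it is (infinite) virtually cyclic means precisely that $Q\cong\Z\oplus T$, where $T$ is its finite torsion subgroup. By definition an expansion of $G_v$ is determined by the choice of a hyperplane $H$ of $G_v$ with $\tilde G_v\subseteq H$ (the modulus being the identity), so proving uniqueness of the expansion amounts to showing that such an $H$ is unique, and proving that the new edge is universally elliptic amounts to understanding how $\tilde G_v$ sits inside $H$.

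For uniqueness, I would note that a hyperplane $H\supseteq\tilde G_v$ is the same datum as a subgroup $K=H/\tilde G_v\leq Q$ with $Q/K\cong\Z$. In $Q\cong\Z\oplus T$ any such $K$ has free rank $0$ (so that the quotient keeps free rank one) and contains all of $T$ (so that $Q/K$ is torsion-free); hence $K=T$, and $H$ is forced to be the preimage of $T$ in $G_v$. Existence of this $H$ is clear by pulling back the projection $Q\twoheadrightarrow\Z$, so there is exactly one hyperplane containing $\tilde G_v$, and therefore exactly one expansion.

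For universal ellipticity, the key remark is that for this $H$ one has $H/\tilde G_v=T$ finite, so $\tilde G_v$ is of finite index in $H$ and in particular has rank $n-1=\mathrm{rank}(H)$. After the expansion the new vertex $v'$ carries $G_{v'}=H$, the new loop $e$ is of type $1-1$ with modulus $\varphi=\mathrm{Id}$, and every edge formerly adjacent to $v$ is now adjacent to $v'$, so $\tilde G_{v'}=\tilde G_v$ remains of finite index in $G_{v'}$. I would then apply Proposition~\ref{cas1} to $e$: since $\varphi=\mathrm{Id}$, conditions (2) and (3) hold automatically for every splitting $G_{v'}=\langle x\rangle\times H'$, so the existence of a witnessing splitting reduces to condition (1), i.e. $\tilde G_{v'}\subseteq H'$. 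But $H'$ is a complement of an infinite cyclic factor of $H\cong\Z^{n-1}$, hence of rank $n-2$, whereas $\tilde G_{v'}$ has rank $n-1$; a rank $n-1$ subgroup cannot lie in a rank $n-2$ group, so no such splitting exists. By Proposition~\ref{cas1}, $G_e=H$ is universally elliptic.

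The only place where virtual cyclicity is genuinely used, and hence the crux of the argument, is the passage from ``$Q$ virtually cyclic'' to ``$\tilde G_v$ has finite index in the canonical hyperplane $H$''; once this rank-$(n-1)$ fact is in hand, universal ellipticity is a one-line rank count fed into Proposition~\ref{cas1}. For completeness I would remark that when $Q$ is finite rather than infinite, $\tilde G_v$ has full rank $n$, so no hyperplane contains it and no expansion is possible; the statement is thus to be read for $Q$ infinite virtually cyclic, which is exactly the case in which an expansion is defined.
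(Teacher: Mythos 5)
Your argument is correct, and it establishes both halves of the lemma, but the ellipticity half follows a genuinely different route from the paper's. The paper does not go through Proposition \ref{cas1} at all: after expanding over the (unique) hyperplane $H \supseteq \tilde G_{v}$, it observes that the edges adjacent to the new vertex $v'$ other than the new loop are universally elliptic (corollary \ref{rigidite1} and lemma \ref{adjacent}), hence so is the abelian group $\tilde G_{v}$ they generate, and since $\tilde G_{v}$ has finite index in $G_{e}=H$, the whole of $G_{e}$ is universally elliptic (an element whose power is elliptic in a tree is itself elliptic). You instead convert the same finite-index fact into a rank count --- $\tilde G_{v'}=\tilde G_{v}$ has rank $n-1$, so it lies in no complement $H'$ of any splitting $G_{v'}=\langle x\rangle\times H'$ --- and feed this into the characterization of Proposition \ref{cas1}. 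The two proofs bottom out in the same place, since the direction of Proposition \ref{cas1} that you invoke is itself proved from corollary \ref{rigidite1} and lemma \ref{adjacent}; what your packaging buys is that the lemma becomes a purely mechanical computation once Proposition \ref{cas1} is in hand, and you additionally supply the uniqueness argument (via $G_{v}/\tilde G_{v}\cong \Z\oplus T$, forcing $H$ to be the preimage of the torsion subgroup $T$), which the paper asserts without proof. Your closing remark on the finite-quotient case also matches the paper's convention, which defines expansions only when $G_{v}/\tilde G_{v}$ is infinite; the only degenerate point worth noting is $n=1$, where your phrase ``rank $n-2$'' breaks down but the conclusion holds vacuously because the trivial group $G_{v'}$ admits no splitting $\langle x\rangle\times H'$ at all.
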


 \begin{proof}

  Let $H$ be the unique hyperplane of $G_{v }$ containing $\tilde G_{v }$.
 Let $\Gamma'$ be the graph of groups obtained by expansion of $G_{v }$ over $H$.  From lemma \ref{adjacent}, the adjacent edges are universally elliptic. Moreover, the group $\tilde G_v$ generated by all those groups has finite index in $G_{{v' }}=H=G_e$, 
 so the edge ${e }$ is universally elliptic.
 \end{proof}

 \begin{lemma}\label{ecrase}

  Let $G=\pi_1(\Gamma)$ be a  \GGBS~group. Let ${e }$ be a non-universally elliptic edge, and ${v }$ the vertex obtained by collapsing ${e }$. Let $\tilde G_{v }\subset G_v$ be the group generated by groups of edges adjacent to ${v }$.
 Denote $\langle\langle \tilde G_{v } \rangle\rangle$ the normal closure of $\tilde G_{v }$ in $G_{v }$. Then $G_{v }/\langle\langle \tilde G_{v } \rangle\rangle$ is not virtually cyclic.
 \end{lemma}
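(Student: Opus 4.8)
The plan is to compare Hirsch lengths. Write $n$ for the rank of $G_e$. By Corollary \ref{rigidite1} a non-universally elliptic edge $e$ is either a $1-1$ loop (with base vertex $v_0$, $G_{v_0}\cong\Z^n$) or a $2-2$ edge (with endpoints $v_1,v_2$, each of rank $n$), so the collapsed vertex group $G_v$ is either the HNN extension $G_\varphi=\Z^n\rtimes_\varphi\Z$ or the extended Klein bottle group $\tilde K=G_{v_1}\ast_{G_e}G_{v_2}$. In both situations $G_v$ is polycyclic of Hirsch length $h(G_v)=n+1$ (directly from the semi-direct product structure in the loop case; via Proposition \ref{TFAE} in the $2-2$ case). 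Since $G_v$ is polycyclic every quotient is polycyclic and Hirsch length is additive, $h(G_v)=h(\langle\langle\tilde G_v\rangle\rangle)+h(G_v/\langle\langle\tilde G_v\rangle\rangle)$. A virtually cyclic group has Hirsch length at most $1$, so it suffices to prove $h(\langle\langle\tilde G_v\rangle\rangle)\leq n-1$, which forces $h(G_v/\langle\langle\tilde G_v\rangle\rangle)\geq 2$.

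The key step is to produce a normal subgroup $H\triangleleft G_v$ of rank $n-1$ with $\tilde G_v\subseteq H$; then $\langle\langle\tilde G_v\rangle\rangle\subseteq H$ and $h(\langle\langle\tilde G_v\rangle\rangle)\leq n-1$. The subgroup $H$ comes from the hypothesis that $e$ is not universally elliptic. In the $2-2$ case, Proposition \ref{cas2} furnishes a hyperplane $H$ of $G_e$ (so $\mathrm{rank}\,H=n-1$) containing all adjacent edge groups, hence containing $\tilde G_v$. Here the decisive observation is that $G_e$ lies inside both abelian groups $G_{v_1}$ and $G_{v_2}$, which generate $G_v=\tilde K$; consequently every element of $G_v$ centralizes $G_e$, so $G_e$ — and a fortiori $H$ — is central, in particular normal, and moreover $\langle\langle\tilde G_v\rangle\rangle=\tilde G_v\subseteq H$. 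In the $1-1$ loop case, Proposition \ref{cas1} provides a splitting $G_{v_0}=\langle x\rangle\times H$ with $\tilde G_v=\tilde G_{v_0}\subseteq H$ and $\varphi(H)=H$; since $H$ sits in the abelian base $G_{v_0}$ and is $\varphi$-invariant, it is normalized both by $G_{v_0}$ and by the stable letter, hence $H\triangleleft G_v$ and $\langle\langle\tilde G_v\rangle\rangle\subseteq H$. In either case $\mathrm{rank}\,H=n-1$.

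Combining the two paragraphs, $\langle\langle\tilde G_v\rangle\rangle\subseteq H$ gives $h(\langle\langle\tilde G_v\rangle\rangle)\leq n-1$, so $h(G_v/\langle\langle\tilde G_v\rangle\rangle)\geq 2$ and the quotient is not virtually cyclic. The main obstacle is precisely the normality claim of the second paragraph: one must recognize that the rank-$(n-1)$ subgroup supplied by Propositions \ref{cas1} and \ref{cas2} survives as a normal subgroup of the \emph{collapsed} group. For $2-2$ edges this is the remark that an edge group lying in both adjacent abelian vertex groups becomes central once the edge is collapsed; for $1-1$ loops it is the $\varphi$-invariance recorded in Proposition \ref{cas1}. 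Once normality is established, the Hirsch-length bookkeeping is routine, using only that virtually cyclic groups have Hirsch length at most $1$.
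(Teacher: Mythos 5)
Your proof is correct, and its heart---extracting from Propositions \ref{cas1} and \ref{cas2} a rank-$(n-1)$ subgroup $H$ containing $\tilde G_v$ that is normal in the collapsed vertex group, so that $\langle\langle \tilde G_{v} \rangle\rangle \subset H$---is exactly the paper's key step; in the $1-1$ loop case your normality argument ($H$ lies in the abelian base and is $\varphi$-invariant) is verbatim the paper's. The differences are peripheral. For the $2-2$ case the paper does not argue directly: it invokes the second statement of Proposition \ref{cas2} (a non-universally elliptic $2-2$ edge can be replaced by a non-universally elliptic $1-1$ loop with the same $\Pi_e$) to reduce everything to the loop case, whereas you observe that $G_e$ is central in $\tilde K = G_{v_1}\ast_{G_e}G_{v_2}$ because both vertex groups are abelian, so $H\subset G_e$ is normal at once; this is a clean shortcut that avoids the replacement construction. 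For the conclusion, the paper notes that $G_{v}/\langle\langle \tilde G_{v} \rangle\rangle$ surjects onto $G_{v}/H\simeq \Z\rtimes\Z$, which is virtually $\Z^2$ hence not virtually cyclic, while you run the equivalent Hirsch-length count $h(G_v/\langle\langle \tilde G_v\rangle\rangle)\geq h(G_v)-h(H)=2$. Both finishes are routine once $H$ is in hand; your version is marginally more self-contained on the $2-2$ side, the paper's is marginally shorter.
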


 \begin{proof}

  By proposition \ref{cas2}, we may assume that ${e }$ is a loop of type $1-1$. The group $G_{v }$ is a semi-direct product $\Z^n\rtimes_\varphi \Z$. By proposition \ref{cas1}, the subgroup $\tilde G_{v }$ is included in a hyperplane $H$ of $\Z^n$ stabilized under the action of $\varphi$. In particular $H$ is normal in $G_v$ and $\langle\langle \tilde G_{v } \rangle\rangle \subset H$. So there exists a projection from $G_{v }/\langle\langle \tilde G_{v } \rangle\rangle$ onto $G_{v }/H \simeq \Z\rtimes \Z$ which is virtually $\Z^2$. The group $G_{v }/\langle\langle \tilde G_{v } \rangle\rangle$ is not virtually cyclic.
 \end{proof}

 \begin{theorem}\label{final}

    Let $G=\pi_1(\Gamma)$ a \GGBS~group. For ${v }$ a vertex, let $\tilde G_{v }$ be the group generated by groups of edges adjacent to ${v }$. A JSJ decomposition of $G$ can be obtained from $\Gamma$ by collapsing the edges carrying a non-universally elliptic group and expanding the groups $G_{v }$ such that $G_{v } / \tilde G_{v }$ is virtually cyclic.
 \end{theorem}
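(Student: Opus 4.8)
The plan is to produce the candidate decomposition $\Gamma^*$ explicitly and then check the two defining properties of a JSJ: that it is universally elliptic, and that it dominates every universally elliptic decomposition. I would first reduce $\Gamma$ (collapsing the non-loop edges of type $1-n$, which changes neither $G$ nor the statement), so that by Corollary \ref{rigidite1} every non-universally elliptic edge is a $1-1$ loop or a $2-2$ edge, described by Propositions \ref{cas1} and \ref{cas2}. Let $\Gamma^*$ be obtained by collapsing all these non-universally elliptic edges and then expanding every vertex $v$ with $G_v/\tilde G_v$ virtually cyclic (the expansion being non-trivial exactly when this quotient is infinite). Two observations make the recipe unambiguous and show the two operations never meet at the same vertex: by Lemma \ref{adjacent} no two non-universally elliptic edges are adjacent, so the collapses are independent; and by Lemma \ref{ecrase} a vertex $w$ arising from collapsing such an edge has $G_w/\langle\langle \tilde G_w\rangle\rangle$ not virtually cyclic, so, since a quotient of a virtually cyclic group is virtually cyclic, $G_w/\tilde G_w$ is not virtually cyclic either and $w$ is never expanded.

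Universal ellipticity is then immediate. Every edge of $\Gamma^*$ either survives the collapse, in which case it already carried a universally elliptic group in $\Gamma$ and universal ellipticity, being a property of the subgroup inside $G$, is unaffected by collapsing or expanding; or it is created by an expansion, in which case it is universally elliptic by Lemma \ref{stable}. Thus $\Gamma^*$ is a universally elliptic decomposition.

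For domination, fix the Bass--Serre tree $T'$ of an arbitrary universally elliptic decomposition; I must show each vertex group of $\Gamma^*$ is elliptic in $T'$. A rigid vertex carries an abelian group $G_v$ in which $\tilde G_v$ has finite index; as $\tilde G_v$ is generated by universally elliptic abelian edge groups it is universally elliptic, and an abelian group containing a universally elliptic finite-index subgroup is itself universally elliptic (the finite quotient acts on the nonempty fixed subtree and so fixes a point), whence $G_v$ is elliptic in $T'$. A vertex created by an expansion carries a hyperplane $H\supseteq\tilde G_v$ containing $\tilde G_v$ with finite index, and the same argument applies. The remaining vertices are flexible, carrying a group $\Pi_e$ obtained by collapsing a non-universally elliptic $1-1$ loop or $2-2$ edge $e$; here the edge groups incident to this vertex of $\Gamma^*$ are exactly the edge groups adjacent to $e$ in $\Gamma$, and since $e$ is not universally elliptic, \cite[lemma 4.10]{GL3a} shows $\Pi_e$ has trivial JSJ relative to these adjacent edge groups.

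Suppose, for contradiction, that such a $\Pi_e$ is not elliptic in $T'$. Its action on its minimal invariant subtree yields a nontrivial splitting $S$ of $\Pi_e$ over abelian groups, each edge group of which has the form $A=\Pi_e\cap G_f(T')$ and so lies inside the universally elliptic group $G_f(T')$. The incident edge groups are universally elliptic in $G$, hence elliptic in $T'$ and therefore in $S$, so $S$ is relative to the incident edges; I claim it is moreover universally elliptic relative to them. Indeed, given any splitting $S''$ of $\Pi_e$ relative to the incident edges, blow up the flexible vertex of $\Gamma^*$ by $S''$ to obtain an abelian decomposition $\hat T$ of $G$; the universally elliptic group $G_f(T')\supseteq A$ is elliptic in $\hat T$, and since $A\subseteq\Pi_e$ its fixed vertex lies in the blown-up copy of $S''$, so $A$ is elliptic in $S''$. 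Thus $S$ is a nontrivial universally elliptic splitting of $\Pi_e$ relative to its incident edges, contradicting the triviality of the relative JSJ of $\Pi_e$. Hence $\Pi_e$ is elliptic in $T'$; every vertex group of $\Gamma^*$ is therefore elliptic in every universally elliptic $T'$, so $\Gamma^*$ dominates all of them, and being universally elliptic it is a JSJ decomposition. The one genuinely delicate step is this ellipticity of the flexible groups $\Pi_e$ in an arbitrary universally elliptic tree: the blow-up argument is what upgrades the \emph{relative} triviality furnished by \cite[lemma 4.10]{GL3a} (together with Propositions \ref{cas1} and \ref{cas2}) into the \emph{absolute} ellipticity that domination requires.
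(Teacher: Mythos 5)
Your construction of $\Gamma^*$, the non-interference observations, and the universal ellipticity step all match the paper (Lemmas \ref{adjacent}, \ref{ecrase}, \ref{stable}). The domination argument, however, has a genuine gap: your classification of the vertices of $\Gamma^*$ is incomplete. Besides (a) rigid abelian vertices where $\tilde G_v$ has finite index in $G_v$, (b) vertices created by expansion, and (c) non-abelian vertices carrying some $\Pi_e$ created by a collapse, there is a fourth kind: abelian vertices of $\Gamma$ for which $G_v/\tilde G_v$ is infinite and \emph{not} virtually cyclic (e.g.\ $G_v=\Z^3$ with a single adjacent edge group of rank $1$). Such a vertex is neither expanded nor collapsed, so it survives in $\Gamma^*$, and it is flexible: $G_v$ is \emph{not} universally elliptic, so your finite-index argument cannot reach it, and your sentence ``the remaining vertices are flexible, carrying a group $\Pi_e$'' is false. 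This omitted case is exactly where the heart of the paper's proof lies (its Case 1): if such a $G_v$ were not elliptic in a universally elliptic tree $T'$, then $G_v$, being abelian, acts on the axis of one of its hyperbolic elements via a homomorphism $\varphi\colon G_v\to\Z$ with $\tilde G_v\subset\ker\varphi$, and $\ker\varphi$ fixes the axis pointwise, hence lies in edge groups of $T'$. Because $G_v/\tilde G_v$ is not virtually cyclic, one can choose a primitive $y\in\ker\varphi$ no power of which lies in $\tilde G_v$; expanding $\Gamma^*$ over a hyperplane containing $\tilde G_v$ but not $y$ makes $y$ hyperbolic, so $y$ is not universally elliptic although it lies in an edge group of $T'$ --- contradicting the universal ellipticity of $T'$. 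Note this is also the only place where the hypothesis that virtually-cyclic-quotient vertices have already been expanded actually gets used, so no proof that skips this case can be complete.

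For case (c) your argument is correct but genuinely different from the paper's: you upgrade the \emph{relative} triviality of the JSJ of $\Pi_e$ (from \cite[lemma 4.10]{GL3a} together with Propositions \ref{cas1} and \ref{cas2}) to absolute ellipticity in $T'$ by blowing up $S''$ at the flexible vertex and projecting fixed points to the invariant copy of the Bass--Serre tree of $S''$; the paper instead argues by hand with the polycyclic structure of $\Pi_l$, producing an element $h^pt^q$ ($q\neq 0$) that is elliptic in $T'$ but hyperbolic in $\Gamma$, whence a non-universally elliptic edge group in $T'$. Your blow-up mechanism is in fact flexible enough to repair the missing case (one would need to show that an abelian vertex group with non-virtually-cyclic quotient $G_v/\tilde G_v$ admits no nontrivial relatively universally elliptic splitting relative to $\tilde G_v$ --- which is the rank argument above, and is \emph{not} covered by the citation of \cite[lemma 4.10]{GL3a}, since that lemma concerns the groups $\Pi_e$ of edges $e$), but as written your proof is incomplete.
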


By lemmas \ref{stable} and \ref{ecrase}, the two operations of collapsing and expanding do not interact, so the construction is well define.
 \begin{proof}

  Let $\Lambda$ be the obtained graph of groups. According to lemmas \ref{stable} and \ref{ecrase}, this graph does not contain neither vertex ${v }$ such that $G_{v } / \tilde G_{v }$ is virtually cyclic nor edges carrying a non-universally elliptic group. In particular $\Lambda$ is universally elliptic.
 
It remains to prove the maximality of $\Lambda$.

  Let $\Lambda'$ be another decomposition in which all edge groups are universally elliptic. We have to show that every elliptic element in $\Lambda$ is elliptic in $\Lambda'$. Then by \cite[lemma 3.6]{GL3a} the decomposition $\Lambda$ is a JSJ decomposition of $G$. 

Suppose there exists an element $x$, elliptic in $\Lambda$ and  hyperbolic in $\Lambda'$. We will show there exists an edge group of $\Lambda'$ containing a non-universally elliptic element, a contradiction. 

There are two cases to consider:
   \begin{enumerate}
     \item If $x$ belongs to a vertex group of $\Lambda$ carrying an abelian group.

      Let $v$  be a vertex stabilized by $x$ in the Bass-Serre tree of $\Lambda$.
      Then no power of $x$ belongs to $\tilde G_{v }$ which is universally elliptic. As $G_{v }$ is abelian but not elliptic in $\Lambda'$, it acts non trivially on the axis of $x$ via an map $\varphi:~G_{v } \rightarrow \Z$ with $\tilde G_{v } \subset \ker \varphi$. 

    We shall find a decomposition $G_v=\langle y \rangle\times H$ such that $\tilde G_{v } \subset H$ and $y\in\ker \varphi$. Then expanding $G_{v }$ over $H$ in $\Lambda$, the element $y$ will be hyperbolic, contradicting the universal ellipticity of $\Lambda'$.

    Let $\bar G_{v }$ be the set of elements of $\ker \varphi$ with a power in $\tilde G_{v }$. As $G_{v } / \tilde G_{v }$ is infinite and not virtually cyclic, the set $\ker \varphi \setminus \bar G_{v }$ is non-empty. Take $y$ a primitive element in $\ker \varphi \setminus \bar G_{v }$.
    Taking any hyperplane $H$ of $G_v$ containing $\tilde G_v$ such that $G_v=\langle y \rangle\times H$, we obtain the decomposition.

     \item If $x$ belongs to a vertex group of $\Lambda$ carrying a non-abelian group.

  From proposition \ref{cas2}, we may assume that all non-universally elliptic edges of $\Gamma$ are loops with type $1-1$. 
Then $x$ belongs to a group $\Pi_l$ for $l$ a non-universally elliptic loop of $\Gamma$. Call $w$ the vertex and $t$ a stable letter of $l$. Let $h$ be an element of $G_{w }$ non universally elliptic. As $\Pi_l$ is polycyclic, it acts on the axis of $x$ in $\Lambda'$. If $h$ is elliptic in $\Lambda'$, then $h^2$ fixes the axis and so $\Lambda'$ is not universally elliptic. If $h$ is hyperbolic , there exists two integers $p$ and $q\neq 0$ such that $h'=h^pt^q$ is elliptic in $\Lambda'$ (the integer $p$ may be equal to $0$ if $t$ is already elliptic in $\Lambda'$). Up to take the square of $h'$, we may assume it fixes the axis pointwise. As $q$ is different from 0, the element $h'$ is hyperbolic in $\Gamma$. There exists an edge of $\Lambda'$ which is not universally elliptic.
   \end{enumerate}

\end{proof}

\begin{theorem}
 Let $G=\pi_1(\Gamma)$ a \GGBS~group, and $n\in \N$. Suppose that $\Gamma$ is a JSJ decomposition over free abelian groups. Then a JSJ decomposition over free abelian groups of rank $\leq n$ may be obtained by collapsing every edge of $\Gamma$ with group of rank $> n$.
\end{theorem}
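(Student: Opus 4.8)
The plan is to verify the two defining properties of a JSJ decomposition relative to the class $\mathcal A_{\le n}$ of free abelian groups of rank at most $n$, following the scheme of Theorem \ref{final}. Write $\Lambda$ for the decomposition obtained by collapsing every edge of $\Gamma$ of rank $>n$; each vertex group $G_v(\Lambda)$ is the fundamental group $\pi_1(\Gamma_v)$ of the component $\Gamma_v$ of the subgraph of rank-$>n$ edges that collapses to $v$. The surviving edges are edges of $\Gamma$ of rank $\le n$, so $\Lambda$ is a decomposition over $\mathcal A_{\le n}$; and these edge groups, being edge groups of the JSJ $\Gamma$, are universally elliptic over all abelian groups, hence a fortiori over the smaller class $\mathcal A_{\le n}$. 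Thus $\Lambda$ is universally elliptic over $\mathcal A_{\le n}$, and by \cite[lemma 3.6]{GL3a} it remains to show that $\Lambda$ dominates every decomposition $\Lambda'$ over $\mathcal A_{\le n}$ with universally elliptic edge groups. For this it suffices to show that each vertex group $G_v(\Lambda)=\pi_1(\Gamma_v)$ is elliptic in the Bass--Serre tree $T'$ of $\Lambda'$. I would take $\Gamma$ in the form produced by Theorem \ref{final}, so that no vertex $w$ has $G_w/\tilde G_w$ virtually cyclic.

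The heart of the argument is the following claim: every vertex group $G_w(\Gamma)\cong\Z^m$ of $\Gamma$ is elliptic in $T'$. Suppose not. Being abelian and non-elliptic, $G_w$ preserves a line $\ell'\subset T'$ and acts on it through a surjection $\pi\colon G_w\twoheadrightarrow\Z$, whose kernel $K$ has rank $m-1$ and fixes $\ell'$ pointwise; thus $K$ lies in an edge group of $\Lambda'$, forcing $m-1\le n$. The adjacent edge groups of $\Gamma$ are universally elliptic, hence elliptic in $T'$, hence contained in $K$, so $\tilde G_w\subset K$. Since $K\neq G_w$ the index of $\tilde G_w$ is infinite, and as $G_w/\tilde G_w$ is not virtually cyclic the rank of $\tilde G_w$ is at most $m-2$; one can then choose a hyperplane $K'$ of $\Z^m$ containing $\tilde G_w$ with $K\not\subset K'$. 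Replacing $w$ in $\Gamma$ by the HNN extension $G_w=K'\ast_{K'}$ with identity modulus (vertex and edge group $K'$, of rank $m-1\le n$), and leaving all other vertices and edges unchanged, yields a decomposition $\Delta$ of $G$ over $\mathcal A_{\le n}$; the adjacent edges stay attached since their groups lie in $\tilde G_w\subset K'$. Any element of $K$ outside $K'$ is hyperbolic in $\Delta$, so the edge group of $\Lambda'$ containing $K$ is not elliptic in $\Delta$, contradicting the universal ellipticity of $\Lambda'$ over $\mathcal A_{\le n}$. This proves the claim.

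Granting the claim, the ellipticity of $G_v(\Lambda)=\pi_1(\Gamma_v)$ would follow by a connectedness argument exploiting the large rank of the collapsed edges. Each edge $\tilde e$ of the Bass--Serre tree $S$ of $\Gamma_v$ carries a group of rank $>n$; it is universally elliptic, hence elliptic in $T'$, and being too large to fix an edge of $T'$ (whose stabilizers have rank $\le n$) it fixes a single vertex $p_{\tilde e}$. For a vertex $\tilde x$ of $S$ adjacent to $\tilde e$ one has $G_{\tilde e}\subset G_{\tilde x}$, so by the claim $\emptyset\neq\mathrm{Fix}_{T'}(G_{\tilde x})\subset\{p_{\tilde e}\}$, whence $\mathrm{Fix}_{T'}(G_{\tilde x})=\{p_{\tilde e}\}$. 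Two edges of $S$ sharing a vertex thus determine the same point, and since $S$ is connected the point $p:=p_{\tilde e}$ is independent of $\tilde e$. Finally, for any $g\in\pi_1(\Gamma_v)$ and any vertex $\tilde x$ the equalities $g\cdot\{p\}=g\cdot\mathrm{Fix}_{T'}(G_{\tilde x})=\mathrm{Fix}_{T'}(G_{g\tilde x})=\{p\}$ give $g\cdot p=p$, so $\pi_1(\Gamma_v)$ fixes $p$ and is elliptic in $T'$. (When $\Gamma_v$ is a single vertex the claim already gives the conclusion.) This establishes domination, and with it the theorem.

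The main obstacle is the claim of the second paragraph: ruling out that an abelian vertex group of $\Gamma$ becomes hyperbolic in a universally elliptic rank-$\le n$ decomposition. The delicate step is producing the transverse hyperplane $K'$, which exists precisely because $G_w/\tilde G_w$ is not virtually cyclic; this is why one must work with the expanded representative of the JSJ given by Theorem \ref{final} rather than an arbitrary one. The remaining ingredients — the universal ellipticity of $\Lambda$ and the propagation argument of the third paragraph — are then routine, the latter being a direct consequence of the fact that an edge group of rank $>n$ has a unique fixed point in any tree with rank-$\le n$ edge stabilizers.
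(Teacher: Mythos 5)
Your overall strategy --- verifying the two defining properties of the $n$-JSJ directly, rather than invoking \cite[Proposition 7.1]{GL3a} as the paper does --- is legitimate, and the universal ellipticity of $\Lambda$ together with the propagation argument of your third paragraph are sound. But there is a genuine gap at the heart of the proof: your key claim is stated and proved only for \emph{abelian} vertex groups $G_w\cong\Z^m$ of $\Gamma$, whereas an abelian JSJ decomposition of a \GGBS~group in general has non-abelian (flexible) vertex groups of the form $\Z^p\rtimes\Z$ --- these are exactly what the collapses of non-universally elliptic $1-1$ loops and $2-2$ edges in Theorem \ref{final} create. Your third paragraph applies the claim to \emph{every} vertex $\tilde x$ of a collapsed component, and such components can contain flexible vertices: by propositions \ref{cas1} and \ref{cas2} the edge groups adjacent to a flexible vertex lie in a hyperplane of its $\Z^p$ part, so they can have rank $>n$ as soon as $p\ge n+2$. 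Flexible vertices with $p\le n$ survive as vertices of $\Lambda$, so their ellipticity in $\Lambda'$ is likewise needed for domination. For $p>n$ ellipticity is easy (the pointwise stabilizer of the invariant line of $\Z^p\rtimes\Z$ in $T'$ has Hirsch length $p>n$, yet would have to embed in an edge group of rank $\le n$), but for $p\le n$ your hyperplane construction does not apply at all, and one needs an argument like case 2 of the paper's proof of Theorem \ref{final}: an element of an edge group of $\Lambda'$ lying on the invariant line of $G_w$ must be shown to be hyperbolic in the rank $\le n$ decomposition obtained from $\Lambda$ by re-expanding the $1-1$ loop at $w$ (legitimate precisely because that loop has rank $p\le n$), contradicting the $n$-universal ellipticity of $\Lambda'$.

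There is a second, more local error: the witnessing decomposition $\Delta$ in your second paragraph is \emph{not} over $\mathcal A_{\le n}$. You expand $w$ inside $\Gamma$ and leave ``all other vertices and edges unchanged'', but $\Gamma$ is the $\infty$-JSJ and typically has edges of rank $>n$ --- indeed these are the edges being collapsed in the theorem --- so $\Delta$ retains them and cannot be used to contradict the $n$-universal ellipticity of $\Lambda'$. The fix is to collapse all rank $>n$ edges of $\Delta$ as well, and then to check that an element of $K\setminus K'$ remains hyperbolic after this collapse; this is true (writing $G_w=\langle x\rangle\times K'$, such an element has non-zero image under the homomorphism $G\to\Z$ induced by the surviving loop, while every elliptic element is conjugate into a vertex group and so has image zero), but it is a necessary step, not a formality. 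Finally, your reduction to the representative of the JSJ produced by Theorem \ref{final} should be justified, since the theorem is stated for an arbitrary JSJ $\Gamma$: for instance, show that \emph{no} JSJ decomposition can have an abelian vertex $w$ with $G_w/\tilde G_w$ virtually cyclic, because expanding it would produce a universally elliptic refinement in which $G_w$ is not elliptic, contradicting the fact that a JSJ dominates every universally elliptic decomposition.
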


We will use the notation $n$-JSJ decomposition and $n$-universally elliptic for decomposition over group of rank $\leq n$ and $\infty$-JSJ decompositions and $\infty$-universally elliptic for decomposition over all free abelian groups.

\begin{proof}
 By \cite[Proposition 7.1]{GL3a}, we know that an $n$-JSJ decomposition may be obtained by refining an $\infty$-JSJ decomposition, and then collapsing all edges carrying a group of rank $> n$. So it suffices to show that if a vertex of $\Gamma$ 
 can be decomposed over a group of rank $\leq n$, then this decomposition is not $n$-universally elliptic.

Let $v$ be a vertex of $\Gamma$. 
Its vertex group is a semi-direct product $G_v=\Z^p\rtimes \Z$ (the product may be direct). 
The only non-trivial reduced decompositions of $G_v$ are as $1-1$ loop or a $2-2$ edge with edge group $\Z^p$.

Call $\Lambda$ a decomposition of $G$ refining $\Gamma$ and an $n$-JSJ decomposition $\Delta$, and $\Lambda_v$ the subgraph of $\Lambda$ whose projection on $\Gamma$ is $v$. Assume $\Lambda_v$ has a minimal number of edges over decomposition refining $\Gamma$ and $\Delta$. If $\Lambda_v$ contains a non-reduced edge then after collapsing all edges with groups of rank $>n$ in $\Lambda$ the edge stays unreduced (if an edge having same group as one of its endpoint $v$ is not collapsed then $rank(G_v)\leq n$ and no edge adjacent to $v$ is collapsed). As $\Lambda_v$ has a minimal number of edges, it is reduced. 


If $\Lambda_v$ is not a trivial decomposition of $G_v$, as $G_v$ cannot be decomposed over groups of rank $>p$, if $\Lambda$ is $n$-universally elliptic then $\Lambda$ is $\infty$-universally elliptic.
This is a contradiction with the fact that $\Gamma$ is a $\infty$-JSJ decomposition.
\end{proof}

\section{Contruction of the JSJ decomposition}\label{algo}
We now describe an algorithm which gives the abelian JSJ decomposition of a \GGBS~group.

We assume that a \GGBS~group is given under the form of a \GGBS~decomposition, that is, given by the description of the vertex groups, edge groups, and the inclusion maps of the edge groups into the vertex groups. Given an edge $e$ and an endpoint $v$ of $e$, we may decide if $G_e$ is of finite index in $G_v$ and if so, compute this index. In particular, we may detect $1-1$ loops and $2-2$ edges.


\begin{theorem}\label{thalgo}
 The construction of a JSJ decomposition of a \GGBS~group is algorithmic.
\end{theorem}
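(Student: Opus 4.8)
The plan is to read the algorithm directly off Theorem \ref{final}, made explicit by Theorem \ref{simplesintro}: the JSJ decomposition is produced from $\Gamma$ by a finite sequence of explicit local moves — collapse the $1-1$ loops and $2-2$ edges that fail to be universally elliptic, and expand each vertex ${v }$ with $G_{v }/\tilde G_{v }$ virtually cyclic. Since the graph $\Gamma$ has finitely many vertices and edges, it suffices to show that each ingredient is effectively computable: (i) reducing $\Gamma$ and detecting $1-1$ loops and $2-2$ edges; (ii) computing $\tilde G_{v }$ and deciding whether $G_{v }/\tilde G_{v }$ is virtually cyclic; (iii) deciding universal ellipticity of a $1-1$ loop (Proposition \ref{cas1}) and of a $2-2$ edge (Proposition \ref{cas2}); and (iv) carrying out the collapses and expansions. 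Every one of these reduces to decision problems in linear algebra over $\Z$, which are solvable by Smith normal form, rational eigenspace computations, and lattice saturation.

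For the bookkeeping steps this is routine. Reducing $\Gamma$ means collapsing the non-loop edges of type $1-n$ one at a time, finitely often; detecting $1-1$ loops and $2-2$ edges is granted by the hypothesis that the indices $[G_{v }:G_{e }]$ are computable. Since each vertex group is a $\Z^n$, the subgroup $\tilde G_{v }$ generated by the images of the adjacent edge groups is obtained by stacking generators into an integer matrix, and the free rank of $G_{v }/\tilde G_{v }$, equal to $n-\mathrm{rank}_{\mathbb Q}\tilde G_{v }$, is read off from its Smith normal form; the quotient is virtually cyclic exactly when this rank is $\le 1$. The modulus $\varphi\in GL_n(\Z)$ of a $1-1$ loop is computed from the two inclusion maps of the edge group into the vertex group. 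Once the non-universally-elliptic edges and the vertices to expand are identified, the moves are explicit graph-of-groups manipulations as described in Section \ref{finalsection}.

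The heart of the matter is deciding the criteria of Propositions \ref{cas1} and \ref{cas2}. For a $1-1$ loop at ${v }$ with modulus $\varphi$, Proposition \ref{cas1} asks whether there is a splitting $G_{v }=\langle x\rangle\times H$ with $\tilde G_{v }\subseteq H$, $\varphi(H)=H$, and either $\varphi(x)x^{-1}\in H$ with $\varphi_{|H}$ of finite order, or $\varphi(x)x\in H$ with $\varphi_{|H}=\mathrm{Id}$. I would translate this into a statement about $\varphi$-invariant hyperplanes: such an $H$ is the kernel of a primitive $f\colon\Z^n\twoheadrightarrow\Z$ with $f\circ\varphi=\pm f$, the sign recording the induced action on the rank-one quotient, so the two cases ask respectively for sign $+1$ with $\varphi_{|H}$ of finite order, or sign $-1$ with $\varphi_{|H}=\mathrm{Id}$. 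Now $\varphi_{|H}=\mathrm{Id}$ means $H\subseteq\ker_{\Z}(\varphi-\mathrm{Id})$, while $\varphi_{|H}$ of finite order means $H\subseteq\ker_{\Z}(\varphi^{L}-\mathrm{Id})$, where $L$ is a bound (Minkowski) on the order of torsion elements of $GL_{n-1}(\Z)$. Both saturated kernels are computable, so the existence of $H$ reduces to checking that $\tilde G_{v }$ lies in the relevant saturated kernel $W$, that $\mathrm{rank}(W)\ge n-1$, and that the prescribed eigenvalue $\pm1$ occurs on a $\varphi$-invariant hyperplane through $\tilde G_{v }$ — a final linear test on the $\pm1$-eigenspace of $\varphi^{T}$ intersected with the annihilator of $\tilde G_{v }$. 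The $2-2$ case (Proposition \ref{cas2}) is simpler: one searches for a single rank-$(n-1)$ subgroup $H\subseteq G_{e }$ that is simultaneously a hyperplane of $G_{e }$, $G_{v }$ and $G_{{v' }}$ and contains all adjacent edge groups other than $G_{e }$; since $H$ is cut out as a hyperplane containing a fixed finitely generated subgroup, existence is again a finite lattice computation.

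The step I expect to be the main obstacle is the $1-1$ loop decision, and within it the finite-order clause: read naively it asks one to search over all rank-$(n-1)$ direct summands $H$, of which there are infinitely many. The resolution is the combination of Minkowski's bound on finite subgroups of $GL_{n-1}(\Z)$ — which turns \emph{finite order} into membership in the single computable saturated subgroup $\ker_{\Z}(\varphi^{L}-\mathrm{Id})$ — with the structure of $\Z^n$ as a $\Z[\varphi]$-module, which pins $H$ down to one or finitely many candidates. Once existence is settled, producing the complementary vector $x$ and the resulting collapsed or expanded graph of groups is routine.
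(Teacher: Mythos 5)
Your overall architecture is exactly the paper's: read the moves off Theorem \ref{final}, reduce everything to deciding the criteria of Propositions \ref{cas1} and \ref{cas2} by integer linear algebra, and tame the finite-order clause with a Minkowski bound. The bookkeeping, the $2-2$ test, and your sign $-1$ clause are fine. The gap is in the ``final linear test'' you propose for the sign $+1$ clause of Proposition \ref{cas1}. Your test is: (a) $\tilde G_{v }\subseteq W:=\ker_{\Z}(\varphi^{L}-\mathrm{Id})$, (b) $\mathrm{rank}(W)\geq n-1$, (c) the $+1$-eigenspace of $\varphi^{T}$ meets the annihilator of $\tilde G_{v }$ nontrivially. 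Each condition is necessary, but they are not jointly sufficient: (a)--(b) constrain one hyperplane, while the hyperplane witnessing (c), namely $\ker f$, may be a \emph{different} one, on which $\varphi$ has infinite order. Concretely, take $n=3$, $\varphi(e_1)=e_1$, $\varphi(e_2)=-e_2$, $\varphi(e_3)=e_2-e_3$, and $\tilde G_{v }=\langle e_2\rangle$. Then $\varphi^{2}(e_3)=e_3-2e_2$, so $\varphi$ has infinite order and $W=\langle e_1,e_2\rangle$ for every even $L$; thus (a) and (b) hold. The dual covector $e_1^{*}$ satisfies $e_1^{*}\circ\varphi=e_1^{*}$ and kills $\tilde G_{v }$, so (c) holds. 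Yet the only hyperplane on which $\varphi$ acts with finite order is $W$ itself (any such $H$ lies in some $\ker_\Z(\varphi^{k}-\mathrm{Id})\subseteq W$, hence equals $W$), and the induced action on $G_{v }/W$ is $-1$ since $\varphi(e_3)\equiv -e_3$; moreover $\ker_{\Z}(\varphi-\mathrm{Id})=\langle e_1\rangle$ has rank $1$, so the second clause of Proposition \ref{cas1} is unavailable too. By Proposition \ref{cas1} this loop \emph{is} universally elliptic, but your test declares it non-universally elliptic and your algorithm would collapse it, destroying the JSJ. The failure is visible in the witness: $\ker e_1^{*}=\langle e_2,e_3\rangle$ is $\varphi$-invariant with trivial quotient action and contains $\tilde G_{v }$, but $\varphi$ restricted to it is $\begin{pmatrix} -1 & 1 \\ 0 & -1 \end{pmatrix}$, of infinite order.

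The repair is precisely the trichotomy the paper runs, and which your closing paragraph elides when asserting that the $\Z[\varphi]$-module structure ``pins $H$ down to one or finitely many candidates'' (false when $\varphi$ has finite order, e.g.\ $\varphi=\mathrm{Id}$). If $\mathrm{rank}(W)<n-1$, the loop is universally elliptic. If $\mathrm{rank}(W)=n-1$, then $W$ is the \emph{unique} candidate and the correct test is on $W$ itself: check $\tilde G_{v }\subseteq W$ and that $\varphi$ induces $+\mathrm{Id}$ on $G_{v }/W$ --- not an eigenvector search, which is exactly where your test and the counterexample diverge. Only when $W=G_{v }$, i.e.\ $\varphi$ has finite order, is your eigenspace-annihilator test valid; there it is correct, and in fact equivalent to the paper's Lemma \ref{casparticulier}: since a finite-order $\varphi$ is diagonalizable, a covector satisfies $f\circ\varphi=f$ if and only if it vanishes on $E(\varphi)$, so your condition (c) is the dual formulation of ``the smallest $\varphi$-invariant subgroup containing $\tilde G_{v }$ and $E(\varphi)\cap G_{v }$ has rank $<n$''. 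So two of your three regimes are sound, but the middle regime --- infinite-order modulus with an invariant finite-order hyperplane --- is decided wrongly, and without the case distinction the algorithm is incorrect.
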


Let $\Gamma$ be a \GGBS~decomposition and $l$ be a $1-1$ loop with base point $v$ and modulus $\varphi$ of finite order which is not conjugate to $\left(
          \begin{array}{cc}
            -1 & 0 \\ 
            p & Id_{n-1}
          \end{array}
          \right)$ with $p\in\Z^{n-1}$. Call $n$ the rank of $G_v$. Call $\tilde G_v$ the subgroup of $G_v$ generated by groups of edges adjacent to $v$ excepting $l$. Call $E(\varphi)\subset G_v\otimes \mathbb C$ the subspace generated by all eigenspaces of $\varphi$ associated to eigenvalues different from $1$. As $\varphi$ is of finite order, thus diagonalizable, the dimension of $E(\varphi)$ is equal to $n$ minus the dimension of $E_1(\varphi)$, the eigenspace associated to $1$. Define $\bar G_v$ as  the smallest  $\varphi$-invariant subgroup of $G_v$ containing $\tilde G_v$ and $E(\varphi)\cap G_v$.

\begin{lemma}\label{casparticulier}
With the previous notations, the loop $l$ is universally elliptic if and only if $\bar G_v$ is of rank $n$.
\end{lemma}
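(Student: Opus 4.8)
The plan is to read the criterion straight off Proposition \ref{cas1} and then convert the resulting linear-algebraic condition on $\varphi$ into the stated rank condition, using that $\varphi$ (being of finite order) is diagonalizable over $\mathbb C$. First I would invoke Proposition \ref{cas1}: the loop $l$ is universally elliptic exactly when $G_v$ admits no splitting $G_v=\langle x\rangle\times H$ with $\tilde G_v\subset H$, $\varphi(H)=H$, and one of the two alternatives in condition \ref{ter}. Since $\varphi$ has finite order, the clause ``$\varphi_{|H}$ has finite order'' is automatic, so the first alternative reduces to $\varphi(x)x^{-1}\in H$, i.e.\ $\varphi$ induces the identity on the rank-one quotient $G_v/H$. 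The second alternative ($\varphi_{|H}=\mathrm{Id}$ and $\varphi(x)x\in H$) would put $\varphi$ in the form $\left(\begin{smallmatrix}-1&0\\ p&\mathrm{Id}\end{smallmatrix}\right)$ in the basis completing $H$ by $x$, which is excluded by hypothesis. Thus $l$ is \emph{not} universally elliptic iff there is a corank-one direct summand $H$ of $G_v$ that is $\varphi$-invariant, contains $\tilde G_v$, and has $\varphi$ acting trivially on $G_v/H$.

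The decisive observation is that $E(\varphi)=\mathrm{Im}(\varphi-\mathrm{Id})$ and $E_1(\varphi)=\ker(\varphi-\mathrm{Id})$ because $\varphi$ is diagonalizable; hence both are defined over $\mathbb Q$, and $G_v\otimes\mathbb Q=V_1\oplus V_{\neq1}$ with $V_1=E_1(\varphi)\cap(G_v\otimes\mathbb Q)$ and $V_{\neq1}=E(\varphi)\cap(G_v\otimes\mathbb Q)$, both $\varphi$-invariant. In particular $E(\varphi)\cap G_v$ is a full-rank lattice in $E(\varphi)$. Now if $H$ is any $\varphi$-invariant subgroup with $\varphi$ trivial on $G_v/H$, then every eigenvector of $\varphi$ with eigenvalue $\lambda\neq1$ must lie in $H\otimes\mathbb C$: its image in $(G_v/H)\otimes\mathbb C$ is simultaneously fixed and multiplied by $\lambda\neq1$, hence zero. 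Therefore $E(\varphi)\subset H\otimes\mathbb C$ and $E(\varphi)\cap G_v\subset H$, so any admissible $H$ contains the smallest $\varphi$-invariant subgroup generated by $\tilde G_v$ and $E(\varphi)\cap G_v$, namely $\bar G_v$. As such an $H$ has rank $n-1$, the existence of an admissible splitting forces $\mathrm{rank}\,\bar G_v\leq n-1$; contrapositively, $\mathrm{rank}\,\bar G_v=n$ implies $l$ is universally elliptic.

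For the converse I would compute $\mathrm{rank}\,\bar G_v$ directly. Let $p_1\colon G_v\otimes\mathbb Q\to V_1$ be the $\varphi$-equivariant projection along $V_{\neq1}$. Since $\varphi=\mathrm{Id}$ on $V_1$, $p_1$ absorbs all the $\varphi$-twisting, so $p_1(\bar G_v)=\langle p_1(\tilde G_v)\rangle$, while $\bar G_v\cap V_{\neq1}=E(\varphi)\cap G_v$. The short exact sequence given by $p_1|_{\bar G_v}$ yields $\mathrm{rank}\,\bar G_v=(n-d)+\mathrm{rank}\,p_1(\tilde G_v)$ with $d=\dim V_1$, so $\mathrm{rank}\,\bar G_v<n$ is equivalent to $p_1(\tilde G_v)$ failing to span $V_1$ over $\mathbb Q$. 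In that case I would pick a rational hyperplane $W\subset V_1$ containing $p_1(\tilde G_v)$ and set $H=G_v\cap(W\oplus V_{\neq1})$. This $H$ is a $\varphi$-invariant corank-one direct summand containing $\tilde G_v$ and $E(\varphi)\cap G_v$, and since $H\otimes\mathbb Q\supset V_{\neq1}$ the quotient $G_v/H$ is $\varphi$-trivial; completing a basis of $H$ by any $x$ generating $G_v/H$ produces a splitting meeting the first alternative of Proposition \ref{cas1}, so $l$ is not universally elliptic. This closes the equivalence.

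The routine but delicate part is the integral (as opposed to complex) linear algebra: confirming that $V_1$ and $V_{\neq1}$ are genuinely rational, that $E(\varphi)\cap G_v$ is a full-rank lattice in $E(\varphi)$, and that the saturation $H=G_v\cap(W\oplus V_{\neq1})$ is a direct summand of corank one. The one conceptual point requiring care is the bookkeeping of the two alternatives of Proposition \ref{cas1}: one must verify that excluding the conjugacy class $\left(\begin{smallmatrix}-1&0\\ p&\mathrm{Id}\end{smallmatrix}\right)$ is precisely what deletes the second alternative, so that the whole universal-ellipticity criterion collapses to the single eigenvalue-$1$ computation encoded by $\bar G_v$.
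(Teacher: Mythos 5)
Your proof is correct and follows essentially the same route as the paper's: both directions rest on Proposition \ref{cas1} together with diagonalizability of $\varphi$, the forward implication via the observation that eigenvectors with eigenvalue $\neq 1$ must lie in $H\otimes\mathbb C$ (forcing $\bar G_v\subset H$), and the converse by building a $\varphi$-invariant hyperplane containing $\bar G_v$ on which the first alternative of Proposition \ref{cas1} holds. Your explicit rational splitting $V_1\oplus V_{\neq 1}$ and the rank formula merely spell out what the paper does by adjoining eigenvalue-$1$ eigenvectors to $\bar G_v$ and saturating.
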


\begin{proof}
          First assume $l$ is not universally elliptic.
          By proposition \ref{cas1}, as $\varphi$ is not conjugate to $\left(
          \begin{array}{cc}
            -1 & 0 \\ 
            p & Id
          \end{array}
          \right)$, in a well-chosen basis $(x,h_1\dots,h_{n-1})$ the modulus $\varphi$ is  of the form $\left(
          \begin{array}{cc}
            1 & 0 \\ 
            p & M
          \end{array}
          \right)$ with $M$ of finite order and $\tilde G_v$ contained in $H=\langle h_1\dots,h_{n-1}\rangle$. Moreover,  as the action of $\varphi$ on $G_v/H$ is trivial, every eigenvector in $G_v\otimes \mathbb C$ with eigenvalue $\neq 1$ must belong to $H\otimes \mathbb C$.
Thus the rank of $\bar G_v$ must be at most $n-1$.

Conversely,  if the rank of $\bar G_v$ is $<n$, then we may construct a group $L$ of rank $n-1$, containing $\bar G_v$ and stable under the action of $\varphi$, by adding to $\bar G_v$ eigenvectors associated to $1$. This is possible since $\dim(E(\varphi)\oplus E_1(\varphi))=n$. Call $H$ the hyperplane containing $L$, and $x\in G_v$ such that $G_v=\langle x\rangle\times H$. Then the matrix of $\varphi$ in a basis $(x, h_1,\dots, h_{n-1})$ where $H=\langle h_1,\dots, h_{n-1}\rangle$, is of the form $\left(
          \begin{array}{cc}
            1 & 0 \\ 
            p & M
          \end{array}\right)$.
          By proposition \ref{cas1}, the loop $l$ is not universally elliptic.
\end{proof}

We may now describe the algorithm.

\begin{proof}[Proof of theorem \ref{thalgo}]
 By theorem \ref{theoremintro}, there are three different algorithms to construct. The first blows up certain vertices into a loop, and the other two decide when a $1-1$ loop and a $2-2$ edge is universally elliptic. Recall that we must collapse non universally elliptic edges.

\begin{itemize}
\item The first algorithm is very simple. Given a vertex $v$, it computes the group $\tilde G_v$ generated by adjacent edge groups. If $rank(\tilde G_v)\neq rank(G_v)-1$, then it leaves $v$ unchanged.
 If the rank of $\tilde G_v$ is equal to the rank of $G_v$ minus one, it finds a primitive element $a$ such that no power is in $\tilde G_v$ (the algorithm of the Smith normal form works for example), and changes $v$ to a new vertex $v'$ and a loop $l$ with $G_l=G_{v'}$ equal to the set of elements of $G_v$ with a power in $\tilde G_v$, and $a$ the stable letter.

\item The second algorithm has to decide whether a $2-2$ edge $e$ is universally elliptic or not. 

Call $v$ and $v'$ the two endpoints of $e$. By proposition \ref{cas2}, if the adjacent edge groups are not all contained in $G_e$,  then $e$ is universally elliptic. Otherwise, call $\tilde G_e$ the subgroup of $G_e$ generated by all these groups, and $\bar G_e$ the set of elements of $G_e$ with a power in $\tilde G_e$. Then $e$ is universally elliptic if and only if $G_v/\bar G_e$ or $G_{v'}/\bar G_e$ has torsion. This is decidable by looking at the Smith normal form of $\bar G_e$ in $G_v$ and $G_{v'}$.
 
\item The third algorithm decides whether a $1-1$ loop $l$ is universally elliptic or not.

  Call $v$ the base point of $l$, call $\varphi$ the modulus of $l$ and $n$ the rank of $G_v$. One first computes the group $\tilde G_v$ generated by adjacent edge groups, $G_l$ excepted.
If $l$ is not universally elliptic, necessarily by proposition \ref{cas1} the modulus $\varphi$ acts on a hyperplane of $G_v$ with finite order. As we have a minoration for Euler's phi function $\phi$ given by $\phi(p)\geq \sqrt{\frac{p}{2}}$, the modulus $\varphi$ acts on a hyperplane with finite order if and only if $\varphi^{(2m^2)!}$ acts trivially on a hyperplane. We therefore consider the eigenspace $E$ of $\varphi^{(2m^2)!}$ associated to the eigenvalue $1$.

There are three cases.
\begin{itemize}
 \item either $E$ has rank less than $n-1$,
 \item or $E$ has rank $n-1$.
 \item or $E=G_v$. 
\end{itemize}
In the first case, the loop $l$ is universally elliptic.

In the second case $E$ is a $\varphi$-invariant hyperplane. As the only hyperplane on which $\varphi$ acts with finite order is $E$, it suffices to check if $\tilde G_v$ is included in $E$, and if $\varphi$ acts trivially on $G_v/E$.
By proposition \ref{cas1}, the loop $l$ is not universally elliptic if and only if those two properties hold.

In the third case, we have to decide if there exists a hyperplane $H$ containing $\tilde G_v$ with properties as in proposition \ref{cas1}. 

We first decide if $\varphi$ is conjugate to $\left(
          \begin{array}{cc}
            -1 & 0 \\ 
            p & Id
          \end{array}
          \right)$ for some $p$ in $\Z^{n-1}$. This is decidable because  matrices conjugate to a matrix of the previous form are exactly the ones with determinant $-1$ and an eigenspace associated to eigenvalue $1$ of dimension $n-1$.
          
If $\varphi$ is conjugate to $\left(
          \begin{array}{cc}
            -1 & 0 \\ 
            p & Id
          \end{array}
          \right)$, then $l$ is not universally elliptic if and only if $\varphi$ act trivially on $\tilde G_v$.

If $\varphi$ is not conjugate to $\left(
          \begin{array}{cc}
            -1 & 0 \\ 
            p & Id
          \end{array}
          \right)$,  call $P(X)$ the polynomial $\frac{X^{(2m^2)!}-1}{X-1}$, and compute the subgroup $F=\langle \ker(P(\varphi)),\tilde G_v\rangle$.
As $rank(\varphi^k(F))$ is first strictly increasing, and then stationary, the group $M=\varphi^n(F)$ is the smallest $\varphi$-invariant subgroup containing $F$.
We compute $m$ the rank of $M$.
By lemma \ref{casparticulier}, the edge $l$ is universally elliptic if and only if $m=n$.
 \end{itemize}
\end{proof}

\bibliographystyle{plain}
\bibliography{abelianjsj}

\vspace{0.3cm}
\begin{flushleft}
\noindent \textsc{Benjamin Beeker}\\
LMNO,\\
Universit\'e de Caen BP 5186\\
F 14032 Caen Cedex \\
France\\
{\tt benjamin.beeker@math.unicaen.fr}
\end{flushleft}
\end{document}